\pgfplotsset{compat=1.12} 
\def\notshow#1\notshowend{} %
\newcommand{\C}{\mathcal{C}}
\newcommand{\df}{\mathrm{d}}
\def\bb#1\eb{\textcolor{blue}{#1}} 
\def\br#1\er{\textcolor{red}{#1}} %
\def\bm#1\em{\textcolor{purple}{#1}} %
\newcommand{\R}{\mathds R}
\newtheorem{thm}{Theorem}[section]
\newtheorem{prop}[thm]{Proposition}
\newtheorem{lemma}[thm]{Lemma}
\newtheorem{cor}[thm]{Corollary}
\theoremstyle{definition}
\newtheorem{defi}[thm]{Definition}
\newtheorem{notation}[thm]{Notation}
\newtheorem{rem}[thm]{Remark}
\newtheorem{conv}[thm]{Convention}
\newcommand{\ben}{\begin{enumerate}}
\newcommand{\een}{\end{enumerate}}
\newcommand{\bit}{\begin{itemize}}
\newcommand{\eit}{\end{itemize}}
\newcommand{\edoc}{\end{document}}
\title[Anisotropic rheonomic Huygens' 
principle]{Applications of cone structures to the anisotropic rheonomic Huygens' 
principle 
}
\author[M. A. Javaloyes]{Miguel \'Angel Javaloyes}\address{Departamento de Matem\'aticas, \hfill\break\indent Universidad de Murcia, \hfill\break\indent Campus de Espinardo,\hfill\break\indent 30100 Espinardo, Murcia, Spain} \email{majava@um.es}
\author[E. Pend\'as-Recondo]{Enrique Pend\'as-Recondo}\address{Departamento de Geometr\'{\i}a y Topolog\'{\i}a, Facultad de Ciencias, \hfill\break\indent Universidad de Granada,\hfill\break\indent Campus Fuentenueva s/n, \hfill\break\indent 18071 Granada, Spain}\address{Departamento de Matem\'aticas, \hfill\break\indent Universidad de Murcia, \hfill\break\indent Campus de Espinardo,\hfill\break\indent 30100 Espinardo, Murcia, Spain}\email{e.pendasrecondo@um.es}
\author[M. S\'anchez]{Miguel S\'anchez}\address{Departamento de Geometr\'{\i}a y Topolog\'{\i}a, Facultad de Ciencias, \hfill\break\indent Universidad de Granada,\hfill\break\indent Campus Fuentenueva s/n, \hfill\break\indent 18071 Granada, Spain}\email{sanchezm@ugr.es}
\begin{document}
\begin{abstract} 
A general framework for the description of classic wave propagation is introduced. This relies on a cone structure $\C$ determined by an intrinsic space $\Sigma$ of velocities of propagation 
(point, direction and time-dependent)   
and
an observers' vector field $\partial/\partial t$ whose integral curves provide both a Zermelo problem for the wave and an auxiliary Lorentz-Finsler metric $ G $ compatible with $\C$. 
The PDE for the wavefront is reduced to the ODE for the $t$-parametrized cone geodesics of $\C$. Particular cases  include time-independence ($\partial/\partial t$ is Killing for $G$), infinitesimally ellipsoidal propagation ($G$ can be replaced by a Lorentz metric) or the case of a medium which moves with respect to $\partial/\partial t$ faster than the wave (the ``strong wind'' case of a sound wave), where a conic time-dependent Finsler metric emerges. The specific case of wildfire propagation is revisited.

\end{abstract}

\maketitle


\vspace{-5mm}

\noindent {\em Keywords}: {\rm Huygens' principle, Zermelo's navigation problem, wavefront, anisotropic medium, rheonomic Lagrangian, Lorentz-Finsler metrics and spacetimes, wildfire propagation, Analogue Gravity.}

\vspace{-4mm}

\tableofcontents
\section{Introduction}
Cone structures appear in different parts of Mathematics and they are the basis of Causality in standard Relativity as well as in recent extensions such as Finsler spacetimes (see \cite{JS20,Mak18,Minguzzi} and references therein). As pointed out by some authors  \cite{BLV05,DS19,Gib,GW11,Pal15}, the viewpoint of spacetimes can be used in non-relativistic settings to describe the propagation of certain physical phenomena that propagate through a medium at finite speed, e.g., wildfires or sound waves, and the framework can be extended to other phenomena such as seismic waves \cite{ABS03,BuSl05,Mon}, water waves, etc.
Indeed, this applies in some situations related to the classical Fermat's principle such as
 {\em Zermelo's navigation problem}, which  seeks the fastest trajectory between two prescribed points for a moving object with respect to a medium, which may also move with respect to the observer (see the recent detailed study in \cite{CJS, JS20}).  
Here, we will focus on 
 Huygens' (or, more properly, Huygens-Fresnel) principle, which states that every point on a wavefront at some instant is itself the source of secondary wavelets which determine the wavefront at later instants. Focusing on the wavefront, the cone structures allow one to consider  the most general situation where the velocity of propagation is anisotropic (i.e., direction-dependent and thus, non-spherical) and rheonomic (i.e., time-dependent). 
With minor modifications, the propagation of the wavefront of a wildfire (affected by the anisotropies of the ground and a possibly time-dependent wind) becomes an outstanding  example. This case was
  developed in the pioneering work by Markvorsen \cite{M16}, who showed the importance of Finslerian geometry for the modeling of wildfires \cite{M16} (simplifying the previous approach by Richards \cite{R}, see also \cite{In95,Teo52}) and introduced rheonomic Lagrangians which could be applied to this setting \cite{M17}. 
    Further works on Huygens' include Palmer \cite{Pal15}, where anisotropic wavefronts in a space endowed with a Minkowski norm are studied, and Dehkordi \& Saa, \cite{DS19}, where the time-independent Huygens' principle is studied in the context of Analogue Gravity (a research programme which investigates analogues of relativistic features within other physical systems \cite{BLV05}),
following the line of Zermelo's problem and wildfire spreading in \cite{M16}.
    
   Here, we will go beyond in the geometric interpretations and will generalize the setting by showing:
\begin{enumerate}
\item The abstract theory of cone structures establishes a general geometric framework to model waves, \S \ref{sec:general_setting}.
Indeed, the wave propagation velocity $\Sigma$ at each point, direction and instant of time provides the cone structure $\C$ on a manifold $M=\R\times N$ with a natural time coordinate $t: M\rightarrow \R$. Then, starting from the initial source $ S_0 $ (say, a compact submanifold), its causal future $ J^+(S_0) $ for $\C$ provides the region that will be affected by the wave. Moreover, the wavefront at each instant $ t_0 $ is given by the boundary $ \partial J^+(S_0)$ intersected with the slice $ \{t=t_0\} $. In the case of wildfires, $S_0$ becomes the boundary of the initial burned area $B_0$ and the wave propagation towards the interior of $B_0$ is neglected (see Fig. \ref{fig:cones}).

\item The triple $(\Omega= dt,\partial/\partial t,\Sigma)$ on $M$ not only characterizes $\C$ but also provides the spacetime trajectories  to be reached by the wavefront (namely, the integral curves of  the observers' vector field $T=\partial/\partial t$), thus, providing a link with Zermelo's problem, \S \ref{s4.1}.

\item The wavefronts are characterized by  the geodesics of the cone structure $\C$, \S \ref{subsec:cone_geod}, which are governed by an ODE system rather than a PDE one, \S  \ref{subsec:orth_cond}. Such cone geodesics $\hat\gamma$ of $ M $ (parametrized by $t\in \R$) represent the spacetime wave trajectories that arrive first at the integral curves of $ \partial/\partial t $. So, their projections $\gamma$ on $ N $ represent the fastest wave propagation trajectories through the space $ N $.
Cone geodesics can always be interpreted as the lightlike geodesics (up to a suitable reparametrization) of a  Lorentz-Finsler metric $G$ which can be canonically chosen from $(dt,\partial/\partial t,\Sigma)$. 
Thus, a neat geometric interpretation of the evolution of the wavefront is obtained. 
 
 \item The  intrinsic/extrinsic  character (with respect to the propagation of the wave) of the introduced  elements  yields relevant geometric consequences. As the wave propagation  itself is not relativistic, the time $t$ is regarded as an absolute coordinate and both, the cone structure $\C$ and the space of velocities $\Sigma$ can be regarded as intrinsic objects in $ M $. 
However, the observers' vector field $T=\partial/\partial t$ is extrinsic to the wave, and different choices will provide different splittings $M=\R\times N$ and Zermelo's problems. In particular,\footnote{It is worth pointing out that the following two items apply to the interpretation of Kerr spacetime and other black holes metrics in Analogue Gravity \cite{GHWW, HL}. Here $T=\partial/\partial t$ is a Killing vector field  which becomes timelike far from the black hole but spacelike in its ergosphere and inside the event horizon.}

\bit\item An intrinsic property that $\C$ might satisfy is to be compatible with a classsical (non-Finslerian) Lorentz metric $g$. Noticeably, this is the case studied by Richards \cite{R} and Markvorsen \cite{M16}, the latter introducing a Finsler metric $F$. However, such an $F$ can be skipped, since an alternative description of the problem (including the ODE's for cone geodesics) follows in terms of $g$, \S \ref{s5}.
 \item The observers' vector field $T=\partial/\partial t$ will be used to determine a canonic Lorentz-Finsler metric $G$ but, depending on the problem, more than one choice of $T$ might be interesting. This may be especially relevant when the speed of the wave with respect to the medium is smaller than the speed of the medium with respect to the observers (``strong wind'' case).  Cone geodesics depend only on $\C$ but a choice  of observers $T^0$ comoving with the medium might be convenient. In such case, the arrival time of the wave with respect to the observers of $T$ and $T^0$ differs. We will focus extensively on the (``mild wind'') case when $T$ is a $\C$-timelike vector field, so that one can write $G=dt^2-F^2$, being $F$ a (time-dependent) Finsler metric, but this restriction will also be removed, \S \ref{s6}.
 \eit
\een

 The paper is structured as follows. In \S \ref{s2}, the necessary background on cone structures and Lorentz-Finsler metrics is introduced, following mainly \cite{JS20}. 
In \S \ref{sec:general_setting}, the wave propagation on  $M=\R\times N$ is  modelled heuristically  along the first three subsections. Focusing on the mild wave case, we start with $\partial/\partial t$ and $\Sigma$ and arrive at a cone triple
$(dt,\partial/\partial t,F)$  with the corresponding Lorentz-Finsler metric $G$ (the setup and choices are summarized in Conv. \ref{convention}). The time-independent case corresponds to identifying $F$ with a Finsler metric on $N$ and $\partial/\partial t$ being a (timelike) Killing vector field, so that $(M,G)$ is a stationary Finsler spacetime (see \cite[\S 4.2]{JS20}).
The paradigmatic case of wildfires is detailed in \S \ref{subsec:wildfires}.

In \S \ref{s4}, the wavefront is computed. First, the {\em wavemap} is introduced in the spacetime and the wavefront at each time is interpreted there, \S \ref{s4.1}. The spacetime trajectories of the wave are characterized as lightlike pregeodesics of $G$, which represent the locally fastest trajectories from $S_0$, i.e., the first-arriving ones until the null cut points from $S_0$. The key Thm. \ref{cor:zermelo} shows that the corresponding null cut function is positively lower bounded on $S_0$. As $S_0$ has arbitrary codimension, the main difficulty comes from the fact that the space of orthogonal vectors to the submanifold at every point is not linear, but a conic submanifold with a singularity in the origin 
(see also the discussion below Cor. \ref{cor:zermelo0} for other subtleties on the cut function). This problem is solved in Lem. \ref{th:zermelo} by introducing a suitable family of hypersurfaces $S^w$ which contain $ S $ for every orthogonal vector $w$ close to a prescribed one $v$. The proof that orthogonal geodesics solve Zermelo's problem to $S^w$ (and then to $ S $) is now approachable, since the space of orthogonal vectors to $S^w$ consists of two linear vector bundles, where one can define a smooth exponential map. Observe that the proof combines causal and Finsler tools, being both necessary. Rem. \ref{r_resumen_s4.2} summarizes this part. In \S \ref{subsec:cone_geod} we find the equation of the spatial trajectories by obtaining first the equation of the $G$-geodesics and modifying this equation so that the suitable reparametrization is achieved. So, an ODE solution for the computation of the wavefront is achieved, 
Thm. \ref{th:geod_gen}. In \S \ref{subsec:orth_cond}, the results are particularized to the case of wildfire propagation. Taking into account our ODE solution, a PDE one is also obtained  from the spacetime viewpoint (Thm. \ref{th:ort_cond}). This allows us to revisit the PDE solution obtained by Markvorsen \cite{M16,M17} from a more classical Lagrangian viewpoint.

In \S \ref{s5} we consider the case when the space $\Sigma_p$ of velocities at each $p$ is an ellipsoid. According to \cite{M16,R}, this is  an experimental fact for wildfires; anyway, it can be regarded as a first approximation to anisotropies. If the ellipsoid is centered at the origin then $F$ becomes a (time-dependent) Riemannian metric $h$ but, otherwise, such centers determine a vector field $W$ and, then, a Finsler metric of Randers type with Zermelo data $(h,W)$, as studied in \cite{M16}. Here, we observe that this ellipsoidal case is also the case when $\C$ is the cone structure of a Lorentzian metric  $g$. Then, this metric and the corresponding ODE for cone geodesics are written explicitly in Prop. \ref{lem:lorentz} and Thm. \ref{t5.3}, resp., providing a non-Finslerian alternative to \cite{M16}. Moreover, the latter theorem yields an extension to the time-dependent case of the  correspondence
between relativistic stationary spacetimes and 
 Randers spaces in \cite{CJS11} (compare also with \cite{GHWW}).
 
In \S \ref{s6}, the case of strong wind is considered. Now, $\Sigma$ and $T=\partial/\partial t$ yield a (time-dependent) wind Finslerian structure, \S \ref{6.0}. This is a geometric notion introduced  in  \cite{CJS} with independent interest (see, e.g., \cite{JS17}). In \S \ref{6.1} we explain how $\C$ and the region affected by the wave can be described as before just replacing $T$ by a comoving vector field. However, as emphasized in Rem. \ref{r6.1} the trajectories of the wave are naturally described in terms of the conic Finsler metric associated with the wind Finslerian structure. Finally, an application to the active part of a wildfire is also given, \S \ref{6.2}.

Summing up, the spacetime viewpoint we adopt throughout this work enables us to settle a unified geometric framework for wave propagation in general situations (time and direction-dependent propagation, arbitrary wind, and space and source of arbitrary dimensions). As a consequence, we have been able to deal with new issues such as the null cut properties of the wave trajectories, \S \ref{s4.1}, or the strong wind case, \S \ref{s6}.

\section{Preliminaries on Lorentz-Finsler cones}\label{s2}
In order to make this work as self-contained as possible, we summarize in this section the main definitions and results we will use regarding cone structures and Lorentz-Finsler metrics, following \cite{JS20}.

Throughout this section, $ V $ and $ M $ will denote a real vector space and a smooth (namely, $C^\infty$) manifold, resp., of dimension $ m = n+1 \geq 3 $, being $ TM $ the tangent bundle of $ M $.

\subsection{Cone structures and causality}
We start by introducing the definition of cones at the level of vector spaces. This notion, when transplanted to manifolds, will generate what we will call a cone structure.
\begin{defi}
A smooth hypersurface $ \mathcal{C}_0 $ embedded in $ V \setminus \lbrace 0 \rbrace $ is a {\em cone} if it satisfies the following properties:
\begin{enumerate}
\item {\em Conic}: for all $ v \in \mathcal{C}_0 $, $ \lbrace \lambda v: \lambda > 0 \rbrace \subset \mathcal{C}_0 $.
\item {\em Salient}: if $ v \in \mathcal{C}_0 $, then $ -v \notin \mathcal{C}_0 $.
\item {\em Convex interior}: $ \mathcal{C}_0 $ is the boundary in $ V \setminus \lbrace 0 \rbrace $ of an open subset $ A_0 \subset V \setminus \lbrace 0 \rbrace $ (the $ \mathcal{C}_0 $-interior) which is convex, in the sense that, for any $ v,u \in A_0 $, the segment $ \lbrace \lambda v + (1-\lambda)u: 0 \leq \lambda \leq 1 \rbrace \subset V $ is included entirely in $ A_0 $.
\item {\em (Non-radial) strong convexity}: the second fundamental form of $ \mathcal{C}_0 $ as an affine hypersurface of $ V $ is positive semi-definite (with respect to an inner direction pointing out to $ A_0 $) and its radical at each point $ v \in \mathcal{C}_0 $ is spanned by the radial direction $ \lbrace \lambda v: \lambda > 0 \rbrace $.
\end{enumerate}
\end{defi}

Any cone can be constructed by taking a compact strongly convex hypersurface $\Sigma_0$ of an  affine hyperplane $\Pi\subset  V$, with $0\not\in \Pi$, and taking all the open half-lines through $\Sigma_0$ starting at $0$ \cite[Lem. 2.5]{JS20}.

\begin{defi}
\label{def:cone_structure}
A {\em cone structure} $ \mathcal{C} $ is an embedded hypersurface of $ TM $ such that, for each $ p \in M $:
\begin{enumerate}
\item $ \mathcal{C} $ is transverse to the fibers of the tangent bundle, i.e., if $ v \in \mathcal{C}_p \coloneqq T_pM \cap \mathcal{C} $, then $ T_v(T_pM) + T_{(p,v)}\mathcal{C} = T_{(p,v)}(TM) $, and
\item $ \mathcal{C}_p $ is a cone in $ T_pM $.
\end{enumerate}
We denote by $ A_p $ the $ \mathcal{C}_p $-interior, and $ A \coloneqq \cup_{p \in M} A_p $.
\end{defi}

Notice that,  even if $\C$ is smooth, the transversality condition (1) is necessary to ensure that the fibers $\C_p$ vary smoothly with $p\in M$.

A cone structure provides some classes of privileged vectors, which can be used to define the usual notions about causality.

\begin{defi}
Given a cone structure $ \mathcal{C} $ in $ M $, we say that a vector $ v \in T_pM $ is
\begin{itemize}
\item {\em timelike} if $ v $ or $ -v $ belongs to $ A_p $,
\item {\em lightlike} if $ v $ or $ -v $ belongs to $ \mathcal{C}_p $,
\item {\em causal} if it is timelike or lightlike, i.e., if $ v $ or $ -v $ belongs to $ \overline{A}_p \setminus \lbrace 0 \rbrace $,
\item {\em spacelike} if it is not causal.
\end{itemize}
Also, a causal vector is {\em future-directed} if $ v \in \overline{A}_p \setminus \lbrace 0 \rbrace $ and {\em past-directed} if $ -v \in \overline{A}_p \setminus \lbrace 0 \rbrace $.

Analogously, we say that a piecewise smooth curve $ \gamma: I \rightarrow M $ is future-directed (resp. past-directed) timelike, lightlike or causal, when its tangent vector $ \gamma' $ (or both $ \gamma'(t_0^+) $ and $ \gamma'(t_0^-) $ at any break $ t_0 \in I $) is future-directed (resp. past-directed) timelike, lightlike or causal.

Moreover, we say that two points $ p,q \in M $ are {\em chronologically related}, denoted $ p \ll q $, if there exists a future-directed timelike curve from $ p $ to $ q $, and {\em causally related}, denoted $ p \leq q $, if either $ p = q $ or there exists a future-directed causal curve from $ p $ to $ q $. This allows us to define the following sets:
\begin{itemize}
\item {\em chronological future}: $ I^+(p) \coloneqq \lbrace q \in M: p \ll q \rbrace $,
\item {\em chronological past}: $ I^-(p) \coloneqq \lbrace q \in M: q \ll p \rbrace $,
\item {\em causal future}: $ J^+(p) \coloneqq \lbrace q \in M: p \leq q \rbrace $,
\item {\em cusal past}: $ J^-(p) \coloneqq \lbrace q \in M: q \leq p \rbrace $,
\end{itemize}
and the {\em horismotic relation}: $ p \rightarrow q $ when $ q \in J^+(p) \setminus I^+(p) $. 

Finally, a {\em time function} is a real function $ t: M \rightarrow \R $ which is strictly increasing when composed with future-directed timelike curves. In addition, if $ t $ is also smooth and no causal vector is tangent to the slices $ \lbrace t = \textup{constant} \rbrace $, then it is called a {\em temporal function}.
\end{defi}

\begin{rem}
When we consider a classical Lorentzian metric $ g $ on $ M $, its lightlike vectors (those nonzero vectors $ v \in TM $ that verify $ g(v,v) = 0 $) provide globally two cone structures, one future-directed and the other one past-directed (see \cite[Cor. 2.19]{JS20}). Therefore, the notions defined above trivially generalize those in the Causal Theory of classical spacetimes.
\end{rem}

Cone structures also admit the notion of geodesic.

\begin{defi}
Let $ \mathcal{C} $ be a cone structure. A continuous curve $ \gamma: I \rightarrow M $ is a {\em cone geodesic} if it is locally horismotic, i.e., for each $ t_0 \in I $ and any neighborhood $ V $ of $ \gamma(t_0) $, there exists a smaller neighborhood $ U   \subset V $ of $ \gamma(s_0) $ such that, if $ I_{\varepsilon} \coloneqq [t_0-\varepsilon,t_0+\varepsilon] \cap I $ satisfies $ \gamma(I_{\varepsilon}) \subset U $ for some $ \varepsilon > 0 $, then
$$
t_1 < t_2 \Leftrightarrow \gamma(t_1) \rightarrow_U \gamma(t_2), \quad \forall t_1,t_2 \in I_{\varepsilon},
$$
where $ \rightarrow_U $ is the horismotic relation for the natural restriction $ \mathcal{C}_U $ of the cone structure to $ U $.
\end{defi}

\subsection{Lorentz-Finsler metrics and Finsler spacetimes}
All variants of Minkowski norms are introduced at the level of vector spaces, from which the notions of Lorentz-Finsler and Finsler metrics on a manifold appear.

\begin{defi}
A positive function $ L: A_0 \subset V \setminus \lbrace 0 \rbrace \rightarrow \R^+ $ is a (proper) {\em Lorentz-Minkowski norm} if
\begin{enumerate}
\item $ A_0 $ is a conic domain (i.e., $ A_0 $ is open, non-empty, connected and if $v\in A_0$, then $\lambda v\in A_0, \forall \lambda>0$),
\item $ L $ is smooth and positively two-homogeneous, i.e., $ L(\lambda v) = \lambda^2L(v) $ for all $ v \in A_0, \lambda > 0 $,
\item for every $ v \in A_0 $, the {\em fundamental tensor} $ g_v $, given by
\begin{equation}
\label{eq:fund_tensor}
g_v(u,w) = \left.\frac{1}{2}\frac{\partial^2}{\partial\delta\partial\eta}L(v + \delta u + \eta w)\right\rvert_{\delta=\eta=0}, \quad \forall u,w \in V,
\end{equation}
has index $ m-1 $, and
\item the topological boundary $ \mathcal{C}_0 $ of $ A_0 $ in $ V \setminus \lbrace 0 \rbrace $ is smooth and $ L $ can be smoothly extended as zero to $ \mathcal{C}_0 $ with non-degenerate fundamental tensor.
\end{enumerate}
\end{defi}

\begin{defi}
Let $ A \subset TM \setminus \textbf{0} $ be a conic domain (i.e., each $ A_p \coloneqq A \cap T_pM $ is a conic domain of $ T_pM \setminus \{0\} $ for all $ p \in M $) such that its closure in $ TM \setminus \textbf{0} $ is an embedded smooth manifold with boundary. Let $ \mathcal{C} \subset TM \setminus \textbf{0} $ be its boundary and $ L: A \rightarrow \R^+ $ a smooth function which can be smoothly extended as zero to $ \mathcal{C} $ satisfying, for all $ p \in M $, that $ L_p \coloneqq L\rvert_{A_p} $ is a (proper) Lorentz-Minkowski norm. Then, $ L $ will be called a (proper) {\em Lorentz-Finsler metric} on $ M $, and $ (M,L) $ a {\em Finsler spacetime}. When necessary, $ L $ will be assumed continuously extended to the zero section $ \textbf{0} \subset TM $.
\end{defi}

Each Lorentz-Finsler metric determines a unique cone structure. More precisely \cite[Cor. 3.7]{JS20}:

\begin{prop}
\label{prop:cone_lf}
If $ L: A \rightarrow \R^+ $ is a Lorentz-Finsler metric, then the boundary $ \mathcal{C} $ of $ A $ in $ TM \setminus \textbf{0} $ is a cone structure with cone domain $ A $. $ \mathcal{C} $ will be called {\em the cone structure of} $ L $.
\end{prop}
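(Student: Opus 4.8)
The plan is to verify the two defining conditions of a cone structure (Def.~\ref{def:cone_structure}) for $\C=\partial A$: transversality to the fibers, and that each fibre $\C_p:=\C\cap T_pM$ is a cone in the sense of the preceding definition. Two of the four cone axioms are immediate. That $\C$ is an embedded hypersurface is part of the hypothesis (the closure $\overline A$ is an embedded manifold with boundary $\C$), and the \emph{conic} property of $\C_p$ follows at once from the positive two-homogeneity of $L_p$, since $A_p$ is invariant under positive dilations and hence so is its boundary $\C_p$.

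The computational engine for the remaining points is the two-homogeneity of $L$. Working in a linear chart on $T_pM$, Euler's theorem gives $\mathrm{Hess}\,L_v = 2\,g_v$ and $dL_v(w)=2\,g_v(v,w)$, and in particular $g_v(v,v)=L(v)$. Thus on $\C_p$, where $L=0$, the generator $v$ is $g_v$-null, while by property~(4) of the Lorentz-Minkowski norm $g_v$ remains non-degenerate there, with the signature prescribed by the index hypothesis constant on the connected set $\overline{A_p}$ by continuity.

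For transversality, fix $v\in\C_p$. If the fibre were tangent to $\C$ at $v$, then $dL_v$ would vanish on the whole vertical space $T_v(T_pM)$; but $dL_v(w)=2\,g_v(v,w)$, so this would force $v$ into the radical of the non-degenerate $g_v$, contradicting $v\neq 0$. Hence $dL_v$ is nonzero on vertical vectors: writing $\overline A$ locally as $\{\rho\ge 0\}$ with $\C=\{\rho=0\}$ and $d\rho\neq 0$, the fibre is transverse to $\C$, which is exactly condition~(1) of Def.~\ref{def:cone_structure}; equivalently $\C_p$ is a regular hypersurface of $T_pM$ with $T_v\C_p=T_v\C\cap T_v(T_pM)=v^{\perp_{g_v}}$. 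For the strong-convexity axiom, the affine second fundamental form of the level hypersurface $\C_p=\{L_p=0\}$ is proportional to $\mathrm{Hess}\,L_v=2\,g_v$ restricted to $T_v\C_p=v^{\perp_{g_v}}$. Since $v$ is $g_v$-null and $g_v$ is non-degenerate of the (Lorentzian) type given by the index assumption, the standard fact about null vectors --- that the induced form on the orthogonal complement is semi-definite with radical the null line itself --- yields precisely that this second fundamental form is semi-definite with radical the radial direction $\{\lambda v:\lambda>0\}$; the sign is fixed by taking the inner normal pointing into $A_p$, where $L>0$.

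It remains to upgrade this infinitesimal convexity to the global \emph{convex interior} and \emph{salient} axioms, which I expect to be the main obstacle: a positive semi-definite second fundamental form is a purely local condition and does not by itself yield a globally convex region. To bridge this gap I would slice $\C_p$ with an affine hyperplane $\Pi\not\ni 0$ transverse to the radial direction; by the conic property $\Sigma_0:=\C_p\cap\Pi$ is a hypersurface of $\Pi$ whose induced second fundamental form is \emph{definite}, and a connected locally strictly convex hypersurface bounds a convex region by a Hadamard-type convexity theorem. This identifies $\C_p$ with the cone generated by a compact strongly convex $\Sigma_0$ exactly as in \cite[Lem.~2.5]{JS20}, so $A_p$ is convex and, being a proper (line-free) convex cone, salient. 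This simultaneously shows that the $\C_p$-interior is $A_p$, i.e.\ that $A$ is the cone domain, completing the verification.
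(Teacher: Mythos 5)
The paper itself gives no proof of this proposition: it is imported directly from \cite[Cor.~3.7]{JS20}, so the benchmark is the argument there, whose whole difficulty sits at the vector-space level --- precisely the step you yourself flag as ``the main obstacle''. Your local analysis is correct and complete: embeddedness of $\C$ from the hypothesis on $\overline{A}$, the conic property of $\C_p$ from two-homogeneity, the transversality argument (if $T_v(T_pM)\subset T_v\C\subset \ker dL_v$, then $d(L_p)_v=2g_v(v,\cdot)$ vanishes on $T_pM$, contradicting non-degeneracy of $g_v$ at $v\neq 0$), and the identification of the affine second fundamental form of $\C_p=\{L_p=0\}$ with a multiple of $g_v$ restricted to $T_v\C_p=v^{\perp_{g_v}}$, which is semi-definite with radical exactly the radial direction because $v$ is $g_v$-null and $g_v$ is of Lorentzian type. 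This disposes of axioms (1) and (4) of a cone, plus condition (1) of Def.~\ref{def:cone_structure}.

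The gap is in your final step, and it is genuine: the slicing plan is circular. An affine hyperplane $\Pi\not\ni 0$ meeting \emph{every} ray of $\C_p$ transversally exists if and only if the closed set of directions $\C_p\cap\mathds{S}^{m-1}$ lies in an open hemisphere --- essentially the salientness/half-space property you are trying to establish. Even granting such a $\Pi$, the slice $\Sigma_0=\C_p\cap\Pi$ need not be compact (rays of $\C_p$ may limit to directions parallel to $\Pi$, pushing $\Sigma_0$ off to infinity) nor connected, while any Hadamard-type theorem requires compactness (or completeness) together with connectedness; nothing in your argument excludes, say, a disconnected $\C_p$ bounding an a priori ``shell'' $A_p$ between two nested cones, which is open, connected and conic, hence consistent with everything you verified locally. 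The natural repair, and the route behind \cite[Lem.~2.5 and Cor.~3.7]{JS20}, is spherical rather than affine: because $\C_p$ is conic and closed in $T_pM\setminus\{0\}$, its trace on an auxiliary unit sphere \emph{is} automatically compact, each of its components is a locally strictly convex hypersurface of $\mathds{S}^{m-1}$ by your radical computation, and convexity theorems for compact locally convex hypersurfaces of the sphere (do Carmo--Warner type) then confine each component to a closed hemisphere and make it the boundary of a spherical convex body; one must still exploit the sign of the second fundamental form (fixed by $L>0$ on the $A_p$-side) and the connectedness of $A_p$ to rule out several components and great-sphere tangencies, and only \emph{after} that does your hyperplane $\Pi$ with a compact strongly convex slice exist, delivering axioms (2) (salient) and (3) (convex interior) and the identification of $A_p$ as the $\C_p$-interior. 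As written, those two axioms remain unproved.
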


As explained below  (Prop. \ref{p_anisotrop}), each cone structure can be obtained from such an $L$ in a highly non-unique way.

\begin{rem}
Since a Lorentz-Finsler metric $ L $ is smooth on $ \mathcal{C} $ with non-degenerate fundamental tensor, $ L $ can be smoothly extended to an open conic subset $ A^* $ containing $ \overline{A} \setminus \lbrace 0 \rbrace $ such that the fundamental tensor of $ L $ has index $ m-1 $ on $ A^* $ and $ L < 0 $ in $ A^* \setminus \overline{A} $. Clearly, such an $ A^* $ can be chosen as a conic domain.
\end{rem}

The required two-homogeneity of Lorentz-Finsler metrics is due to the lack of differentiability of one-homogeneous functions on lightlike vectors. However, we now recover the notion of classical Finsler metrics, for which it is more convenient to choose one-homogeneous functions.

\begin{defi}
A {\em conic Minkowski norm} (resp. {\em Lorentzian norm}) is a smooth positive function $ F: A_0 \subset V \setminus \lbrace 0 \rbrace \rightarrow \R^+ $, being $ A_0 $ a conic domain and $ F $ positively one-homogeneous, satisfying that the fundamental tensor $ g_v $ in \eqref{eq:fund_tensor} for $ L = F^2 $ is positive definite (resp. has index $ m-1 $) for all $ v \in A_0 $. In addition, when $ A_0 = V \setminus \lbrace 0 \rbrace $ we say that $ F $ is a {\em Minkowski norm}.
\end{defi}

\begin{defi}
A {\em conic Finsler metric} (resp. {\em Lorentzian Finsler metric}) on $ M $ is a smooth function $ F: A \subset TM \setminus \textbf{0} \rightarrow \R^+ $, being $ A $ a conic domain, such that $ F_p \coloneqq F\rvert_{A_p} $ is a conic Minkowski norm (resp. Lorentzian norm) for all $ p \in M $. When $ A = TM \setminus \textbf{0} $ (so that $ F_p $ is a Minkowski norm for all $ p \in M $), we say that $ F $ is a {\em Finsler metric}. When convenient, Finsler metrics are assumed continuously extended to the zero section.
\end{defi}

This definition is trivially extended to any vector bundle $ VM $ (in particular, to any subbundle of $ TM $) in such a way that a (conic) Finsler metric (resp. Lorentzian Finsler metric) on $ VM $ becomes a smooth distribution of (conic) Minkowski norms (resp. Lorentzian norms) in each fiber of the bundle.

\begin{defi}
\label{def:orth}
Let $ L: A^* \rightarrow \R $ be a Lorentz-Finsler metric on $ M $ with fundamental tensor $ g $. For any $ v \in A^*_p,w \in T_pM $, we say that $ v $ is $ L $\textit{-orthogonal} to $ w $, denoted $ v \perp_L w $, if
\begin{equation}\label{e3}
g_v(v,w) = \left.\frac{1}{2} \frac{d}{d \delta} L(v+\delta w)\right\rvert_{\delta = 0} = 0.
\end{equation}
Analogously, $ v \in A_p $ is $ F $\textit{-orthogonal} to $ w \in T_pM $ for a (conic or Lorentzian) Finsler metric $F:A \rightarrow \R^+$, $ v \perp_F w $, if \eqref{e3} holds with $L=F^2$. Also, we say that $ v \in A^*_p $ (resp. $ v \in A_p $) is $ L $-orthogonal (resp. $ F $-orthogonal) to a submanifold $ S \subset M $, with $ p \in S $, if $ v \bot_L w $ (resp. $ v \bot_F w $) for all $ w \in T_pS $.
\end{defi}

\subsection{Cone triples}\label{s2.conetriples}
Cone structures can be univocally determined by a triple that includes a Finsler metric, providing then a natural link between both notions (\cite[Lem. 2.15, Thm. 2.17]{JS20}).

\begin{lemma}
Given a cone structure $ \mathcal{C} $, one can find on $ M $:
\begin{itemize}
\item[(i)] a timelike one-form $ \Omega $ (i.e., $ \Omega(v) > 0 $ for any future-directed causal vector $ v $),
\item[(ii)] an $ \Omega $-unit timelike vector field $ T $ ($ T $ is timelike and $ \Omega(T) = 1 $).
\end{itemize}
\end{lemma}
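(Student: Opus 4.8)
The plan is to reduce the statement to pointwise convex geometry in each tangent space and then globalize by a partition of unity, exploiting that both the set of ``good'' covectors on a fixed cone and the cones $A_p$ themselves are convex, so that the inequalities defining timelikeness and $\Omega$-normalization survive convex combinations. Concretely, I would build each object from local models over a cover $\{U_\alpha\}$ and glue them, checking that the gluing preserves positivity on causal vectors (for $\Omega$) and membership in $A_p$ together with the normalization (for $T$).

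For (i), I first work in a single fiber. Fix $p\in M$. By the salient and convex-interior axioms, $\overline{A}_p$ is a closed convex cone with $\overline{A}_p\cap(-\overline{A}_p)=\{0\}$; in particular the origin is not in the convex hull of the compact base $B_p:=\overline{A}_p\cap S_p$, where $S_p$ is the unit sphere of an auxiliary Riemannian metric. A standard separation argument then yields $\omega_p\in T_p^*M$ with $\omega_p(v)>0$ for all $v\in\overline{A}_p\setminus\{0\}$. To extend this to a neighborhood, I choose local coordinates trivializing $TM$, regard $\omega_p$ as a frame-constant one-form $\Omega_U$, and argue by compactness and continuity that $\Omega_U(v)>0$ for every future-directed causal $v$ over a small enough $U\ni p$: since $\omega_p$ has a positive minimum on the compact set $B_p$ and the fibers $\overline{A}_q$ vary continuously with $q$, the unit causal vectors over $U$ remain in a region where $\Omega_U$ stays positive, and homogeneity extends this to all causal vectors. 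This produces local timelike one-forms $\Omega_\alpha$ on a cover $\{U_\alpha\}$. Taking a partition of unity $\{\rho_\alpha\}$ subordinate to $\{U_\alpha\}$ and setting $\Omega:=\sum_\alpha\rho_\alpha\,\Omega_\alpha$, at each $p$ the covector $\Omega_p$ is a convex combination of the $\Omega_{\alpha,p}$, each positive on $\overline{A}_p\setminus\{0\}$, with weights $\rho_\alpha(p)\ge 0$ not all zero; hence $\Omega_p$ is again positive there, and $\Omega$ is a timelike one-form, proving (i).

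For (ii) I apply the same scheme to the cones $A_p$. On each $U_\alpha$, openness of $A$ and nonemptiness of the fibers let me pick a smooth future-directed timelike field $V_\alpha$ (e.g.\ a frame-constant vector lying in $A_{p_0}$ and, after shrinking $U_\alpha$, in every nearby $A_q$). Since $\Omega$ is timelike, $\Omega(V_\alpha)>0$, so $T_\alpha:=V_\alpha/\Omega(V_\alpha)$ is smooth with $T_\alpha(q)\in A_q$ (as $A_q$ is a cone) and $\Omega(T_\alpha)\equiv 1$. Setting $T:=\sum_\alpha\rho_\alpha\,T_\alpha$, at each $q$ the vector $T(q)$ is a convex combination of vectors in the convex set $A_q$, hence lies in $A_q$, so $T$ is timelike; and $\Omega(T)=\sum_\alpha\rho_\alpha\,\Omega(T_\alpha)=\sum_\alpha\rho_\alpha=1$, giving the $\Omega$-unit timelike field and completing (ii).

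The partition-of-unity bookkeeping and the two normalizations are routine; the step deserving care—the main obstacle—is the local extension in (i), i.e.\ passing from a separating covector at one point to a one-form positive on the \emph{entire} causal cone over a neighborhood. This is precisely where the transversality/continuity of the cone structure (Def.\ \ref{def:cone_structure}(1)) and the compactness of the set of unit causal directions enter, guaranteeing that pointwise positivity on $\overline{A}_p\setminus\{0\}$ is an open condition as $p$ varies.
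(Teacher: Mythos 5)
Your proof is correct, and it takes essentially the same route as the source: the paper does not prove this lemma itself but imports it from \cite[Lem.~2.15]{JS20}, whose argument is exactly your scheme --- a pointwise choice (a separating covector strictly positive on $\overline{A}_p\setminus\{0\}$, an interior vector of $A_p$) extended to a neighborhood by continuity of the cone fibers, glued by a partition of unity using that positivity on the causal cone and membership in the convex set $A_q$ survive convex combinations, with the normalization $\Omega(T)=1$ obtained by scaling within the cone. You also correctly isolate the only step needing care, namely that strict positivity of a fixed covector over the compact set of unit causal directions is an open condition in the base point, which is where transversality of $\mathcal{C}$ to the fibers (Def.~\ref{def:cone_structure}(1)) and the compactness of the cones' bases enter.
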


\begin{rem}
The one-form $ \Omega $ is neither exact nor closed in general, but locally it can be chosen exact, so that $ \Omega = dt $ for some smooth function $ t: U \subset M \rightarrow \R $ (see \cite[Rem. 2.16]{JS20}). In this case, $ t $ is naturally a temporal function for the restriction $ \mathcal{C}_U $ of the cone structure to $ U $.
\end{rem}

Any pair $ (\Omega,T) $ associated with $ \mathcal{C} $ (in the sense of the previous lemma) yields a natural splitting $ TM = \textup{Span}(T) \oplus \textup{Ker}(\Omega) $ with the projection $ \pi^{\Omega}: TM \rightarrow \textup{Ker}(\Omega) $ determined by
\begin{equation}
\label{eq:pi_omega}
v = \Omega(v)T_p + \pi^{\Omega}(v), \quad \forall v \in T_pM, p \in M.
\end{equation}

\begin{thm}
\label{th:cone_triple}
Let $ \mathcal{C} $ be a cone structure. For any choice of a timelike one-form $ \Omega $ and an $ \Omega $-unit timelike vector field $ T $, there exists a unique Finsler metric $ F $ on the vector bundle $ \textup{Ker}(\Omega) \subset TM $ such that, for any nonzero $ v \in T_pM $, $ p \in M $,
\begin{equation}
\label{eq:cone_triple}
v \in \mathcal{C} \Leftrightarrow v = F(\pi^{\Omega}(v))T_p + \pi^{\Omega}(v).
\end{equation}
Moreover, the indicatrix of $ F $ is $ \Sigma = \pi^{\Omega}(\Omega^{-1}(1) \cap \mathcal{C}) $.

Conversely, for any cone triple $ (\Omega,T,F) $ composed by a non-vanishing one-form $ \Omega $, an $ \Omega $-unit vector field $ T $ and a Finsler metric $ F $ on $ \textup{Ker}(\Omega) $, there exists a unique cone structure $ \mathcal{C} $ satisfying \eqref{eq:cone_triple}, which will be said {\em associated with the cone triple}.
\end{thm}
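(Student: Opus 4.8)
The plan is to argue fiberwise and then upgrade to smoothness in $p$. Fix $p\in M$. Since $\Omega$ is timelike we have $\Omega>0$ on $\overline{A}_p\setminus\{0\}$, so the affine hyperplane $\Pi_p:=\Omega^{-1}(1)\cap T_pM=T_p+\textup{Ker}(\Omega)_p$ cuts the cone in a cross-section $\Sigma_0:=\Omega^{-1}(1)\cap\mathcal{C}_p$. By the cross-section characterization of cones (\cite[Lem. 2.5]{JS20}) together with the cone axioms (salient, convex interior, non-radial strong convexity), $\Sigma_0$ is a compact strongly convex hypersurface of $\Pi_p$ and $\mathcal{C}_p$ is exactly the union of the open half-lines from $0$ through $\Sigma_0$. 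By \eqref{eq:pi_omega} and $\Omega(T_p)=1$, the restriction $\pi^{\Omega}\rvert_{\Pi_p}$ is the affine isomorphism $v\mapsto v-T_p$ onto $\textup{Ker}(\Omega)_p$, so $\Sigma_p:=\pi^{\Omega}(\Sigma_0)$ is again compact and strongly convex. Crucially, since $T_p$ is timelike it lies in $A_p$, hence $\pi^{\Omega}(T_p)=0$ sits in the interior of the convex body bounded by $\Sigma_p$; therefore every ray from $0$ meets $\Sigma_p$ exactly once and $\Sigma_p$ is a genuine (full) indicatrix. I then define $F_p$ to be the one-homogeneous function on $\textup{Ker}(\Omega)_p\setminus\{0\}$ with unit level set $\Sigma_p$, giving at once the asserted formula $\Sigma=\pi^{\Omega}(\Omega^{-1}(1)\cap\mathcal{C})$.

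Next I would verify \eqref{eq:cone_triple} by a homogeneity computation. If $v\in\mathcal{C}_p$ then $\Omega(v)>0$, and rescaling to $\tilde v:=v/\Omega(v)\in\Omega^{-1}(1)\cap\mathcal{C}_p$ gives $\pi^{\Omega}(\tilde v)=\pi^{\Omega}(v)/\Omega(v)\in\Sigma_p$, so $F(\pi^{\Omega}(v))=\Omega(v)$ by one-homogeneity and \eqref{eq:pi_omega} returns $v=F(\pi^{\Omega}(v))T_p+\pi^{\Omega}(v)$. Conversely, if $v=F(\pi^{\Omega}(v))T_p+\pi^{\Omega}(v)$ (note $\pi^{\Omega}(v)\neq 0$, as multiples of the timelike $T_p$ are not lightlike), then $\Omega(v)=F(\pi^{\Omega}(v))$ and $\pi^{\Omega}(v)/\Omega(v)\in\Sigma_p$ forces $v/\Omega(v)\in\mathcal{C}_p$, whence $v\in\mathcal{C}_p$ by conicity. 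Uniqueness of $F$ is then immediate: any $F'$ satisfying \eqref{eq:cone_triple} places $v:=F'(w)T_p+w$ in $\mathcal{C}_p$ for each $w\in\textup{Ker}(\Omega)_p\setminus\{0\}$, and the forward direction gives $F(w)=\Omega(v)=F'(w)$.

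The remaining analytic point is that $F$ is smooth jointly in $(p,w)$, so that it is a genuine Finsler metric on the bundle $\textup{Ker}(\Omega)$. Here the transversality condition (1) of Definition \ref{def:cone_structure} is essential: it lets me write $\mathcal{C}$ locally as the regular zero set of a smooth $\Phi$ on $TM$ whose differential does not vanish along the fibers. Then $F(p,w)$ is recovered by solving $\Phi(p,\,s\,T_p+w)=0$ for $s>0$, and the implicit function theorem applies because $\partial_s\Phi\neq 0$ (transversality to the radial/fiber direction), delivering a smooth $F$; the positive-definiteness of its fundamental tensor \eqref{eq:fund_tensor} is the infinitesimal form of the strong convexity of $\Sigma_p$.

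For the converse I would set $\mathcal{C}_p:=\{F(w)T_p+w:\ w\in\textup{Ker}(\Omega)_p\setminus\{0\}\}$, which is conic by one-homogeneity of $F$ and salient because $\Omega$ separates $v$ from $-v$ by sign. Its interior $A_p=\{v:\Omega(v)>F(\pi^{\Omega}(v))\}$ is convex precisely because the unit ball of the Minkowski norm $F$ is, and the non-radial strong convexity of $\mathcal{C}_p$ is exactly the positive-definiteness of the fundamental tensor of $F$; smoothness of $F$ yields an embedded hypersurface transverse to the fibers, and uniqueness of $\mathcal{C}$ is trivial since \eqref{eq:cone_triple} pins down the set. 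I expect the main obstacle to be this \emph{strong-convexity dictionary} — showing that ``second fundamental form positive semi-definite with radical the radial direction'' for $\mathcal{C}_p$ is equivalent to positive-definiteness of $g_v$ for $F_p^2$ — together with the care needed to promote the pointwise construction to a smooth object via the transversality hypothesis.
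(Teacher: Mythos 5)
This theorem is quoted by the paper from \cite[Lem.~2.15 and Thm.~2.17]{JS20} without an included proof, so there is no in-paper argument to match yours against; judged on its own, your proof is correct and is essentially the canonical one: slice the cone by the affine hyperplane $\Omega^{-1}(1)$, transport the cross-section to $\textup{Ker}(\Omega)_p$ by $v\mapsto v-T_p$, observe that timelikeness of $T$ puts $0$ inside the resulting strongly convex body (this is exactly where the hypothesis $T$ timelike is used, and where \cite[Thm.~2.14]{JS14} --- which this paper itself invokes in \S\ref{subsec:general_setting} --- lets you outsource your ``strong-convexity dictionary,'' i.e.\ the equivalence between strong convexity of $\Sigma_p$ and positive-definiteness of the fundamental tensor, rather than leaving it as an open obstacle), and then verify \eqref{eq:cone_triple} by one-homogeneity. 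Your compactness of the cross-section correctly rests on $\Omega>0$ on all of $\overline{A}_p\setminus\{0\}$ (causal, not just lightlike, vectors), and the converse construction rightly uses no timelikeness, only $\Omega(T)=1$ and the Minkowski-norm properties of $F$.

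One point deserves sharper justification. In your smoothness step you assert $\partial_s\Phi\neq 0$ for $\Phi(p,sT_p+w)$ and attribute it parenthetically to ``transversality to the radial/fiber direction.'' Condition (1) of Def.~\ref{def:cone_structure} only guarantees that the fiber-restricted differential of $\Phi$ is nonzero along $\mathcal{C}_p$, i.e.\ that $\mathcal{C}_p$ is a regular hypersurface of $T_pM$; it does not by itself say that the specific direction $T_p$ is transverse to $\mathcal{C}_p$. What does give this is convexity: the tangent hyperplane $T_v\mathcal{C}_p$ at $v\in\mathcal{C}_p$ is a supporting hyperplane of the convex set $\overline{A}_p$ through $v$ (and through $0$, since it contains the radial direction), and the interior point $T_p\in A_p$ cannot lie on a supporting hyperplane, whence $T_p\notin T_v\mathcal{C}_p$ and the implicit function theorem applies. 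With that substitution the argument is complete; the same computation, run in the converse direction, in fact trivializes there, since $\Omega(sT_p+w)-F(w)=s-F(w)$ has $s$-derivative $1$.
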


There is a particular Lorentz-Finsler metric associated with a given cone structure that will be very useful along this work for its simplicity.

\begin{prop}
For any cone triple $ (\Omega,T,F) $ with associated cone structure $ \mathcal{C} $, the continuous function $ G: TM \rightarrow \R $ defined by $ G \coloneqq \Omega^2-F^2 $, i.e.,
$$
G(\tau T_p+v) = \tau^2-F(v)^2, \quad \forall \tau \in \R, v \in \textup{Ker}(\Omega_p), p \in M,
$$
is smooth on $ TM \setminus \textup{Span}(T) $. Moreover, whenever it is smooth, its fundamental tensor (computed as in \eqref{eq:fund_tensor}) is non-degenerate with index $ m-1 $.
\end{prop}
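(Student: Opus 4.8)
The plan is to establish the two assertions—smoothness on $TM\setminus\mathrm{Span}(T)$ and the index of the fundamental tensor—separately, working throughout in the linear splitting $T_pM=\mathrm{Span}(T_p)\oplus\mathrm{Ker}(\Omega_p)$ afforded by the projection $\pi^{\Omega}$ of \eqref{eq:pi_omega}.

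For smoothness I would write $G=\Omega^2-F^2\circ\pi^{\Omega}$. The one-form $\Omega$ is smooth, so $\Omega^2$ is smooth on all of $TM$; the bundle map $\pi^{\Omega}:TM\to\mathrm{Ker}(\Omega)$ is smooth and fiberwise linear; and since $F$ is a (genuine) Finsler metric on $\mathrm{Ker}(\Omega)$, the function $F^2$ is smooth on $\mathrm{Ker}(\Omega)\setminus\mathbf{0}$. Hence $F^2\circ\pi^{\Omega}$ is smooth exactly on the open set where $\pi^{\Omega}(w)\neq 0$. By \eqref{eq:pi_omega} one has $\pi^{\Omega}(w)=0$ iff $w=\Omega(w)T_p\in\mathrm{Span}(T_p)$, so this set is precisely $TM\setminus\mathrm{Span}(T)$, giving the first claim. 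It is essential here that $F^2$ is in general only $C^1$ and not $C^2$ at the origin of each fiber, which is exactly why smoothness cannot be expected along $\mathrm{Span}(T)$.

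For the fundamental tensor, fix $w=\tau T_p+v$ with $v=\pi^{\Omega}(w)\neq 0$ and arbitrary $u_1,u_2\in T_pM$; for $|\delta|,|\eta|$ small, continuity of $\pi^{\Omega}$ keeps $w+\delta u_1+\eta u_2$ in the smooth locus, so the second derivatives are legitimate. Using linearity of $\Omega$ and $\pi^{\Omega}$ I would expand
\[
G(w+\delta u_1+\eta u_2)=\bigl(\Omega(w)+\delta\,\Omega(u_1)+\eta\,\Omega(u_2)\bigr)^2-F^2\bigl(v+\delta\,\pi^{\Omega}(u_1)+\eta\,\pi^{\Omega}(u_2)\bigr)
\]
and differentiate. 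The first summand contributes $\Omega(u_1)\Omega(u_2)$ to the (halved) mixed partial, while by the very definition of the fundamental tensor $g^F_v$ of $F$ the second contributes $-g^F_v(\pi^{\Omega}u_1,\pi^{\Omega}u_2)$, yielding
\[
g^G_w(u_1,u_2)=\Omega(u_1)\Omega(u_2)-g^F_v\bigl(\pi^{\Omega}u_1,\pi^{\Omega}u_2\bigr).
\]

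Finally I would read off the signature in the splitting above. Choosing $T_p$ together with a basis $e_1,\dots,e_n$ of $\mathrm{Ker}(\Omega_p)$, the matrix of $g^G_w$ is block-diagonal $\mathrm{diag}\bigl(1,-[g^F_v(e_i,e_j)]\bigr)$: indeed $g^G_w(T_p,T_p)=\Omega(T_p)^2=1$, $g^G_w(T_p,e_i)=0$ since $\Omega(e_i)=0$ and $\pi^{\Omega}(T_p)=0$, and $g^G_w(e_i,e_j)=-g^F_v(e_i,e_j)$. Because $F$ is a positive-definite Finsler metric, $[g^F_v(e_i,e_j)]$ is positive definite, so its negative is negative definite of rank $n$; thus $g^G_w$ has exactly one positive and $n$ negative eigenvalues, i.e. it is non-degenerate with index $n$. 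The computation is essentially routine; the one point genuinely requiring care is the justification that the fiberwise second derivative of $F^2$ reproduces $g^F_v$—valid precisely because $v\neq 0$ places us where $F^2$ is smooth—so the asserted non-degeneracy holds exactly on the smooth locus and nowhere else.
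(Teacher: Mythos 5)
Your proof is correct and is essentially the argument the paper relies on: the proposition is imported from \cite{JS20} without proof, but your expansion yields exactly the relation \eqref{eq:rel_g_gF} recorded in Rem.~\ref{rem:lf_metric}, and your block-diagonal matrix $\mathrm{diag}\bigl(1,-[g^F_v(e_i,e_j)]\bigr)$ in the splitting $T_pM=\mathrm{Span}(T_p)\oplus\mathrm{Ker}(\Omega_p)$ is the standard signature count giving index $n$. The only (immaterial) overstatements are your claims that smoothness holds \emph{exactly} off $\mathrm{Span}(T)$ and that non-degeneracy holds ``nowhere else'': when $F$ is Riemannian, $G$ is a smooth Lorentzian metric on all of $TM$ (as the paper notes), but the proposition asserts neither of these converses, so your proof of the stated claims stands.
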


\begin{rem}
\label{rem:lf_metric}
Although $ G $ is not properly a Lorentz-Finsler metric because it fails to be smooth on $ \textup{Span}(T) $ (unless $ F $ is Riemannian), it can be shown that for any neighborhood $ U $ of the section $ T $ (regarded as a submanifold of $ TM $) there exists a proper Lorentz-Finsler metric $ L $ defined on all $ TM $ such that $ L = G $ in $ TM $ away from $ U $ (see \cite[Thm. 5.6]{JS20}). As a consequence, any cone structure $ \mathcal{C} $ is the cone structure of a (smooth) Lorentz-Finsler metric defined on all $ TM $.

Nonetheless, for the purposes of this work (in which we will only be interested in lightlike curves), we only need the Lorentz-Finsler metric to be smooth on a neighborhood of $ \mathcal{C} $, so we can always use $ G $ as the Lorentz-Finsler metric associated with a given cone structure (see \cite{CS,LPH} for other works using $ G $). Observe also that the relation between the fundamental tensor of $ G $, $ g^G $, and the fundamental tensor of $ F $, $ g^F $, is
\begin{equation}
\label{eq:rel_g_gF}
g^G_v(u,w) = \Omega(u)\Omega(w)-g^F_{\pi^{\Omega}(v)}(\pi^{\Omega}(u),\pi^{\Omega}(w)),
\end{equation}
for all $ v \in T_pM \setminus \textup{Span}(T_p) $, $ u,w \in T_pM $, $ p \in M $.
\end{rem}

The discussion above underlies the following converse to Prop. \ref{prop:cone_lf} (see \cite[Rem. 5.9]{JS20}).

\begin{prop}\label{p_anisotrop}
Each cone structure $ \mathcal{C} $ uniquely determines a (non-empty) class of Lorentz-Finsler metrics.\footnote{Two metrics $ L_1,L_2$ sharing the same $\C$ are called {\em anisotropically equivalent}. In this case, there is a  smooth positive function $ \mu: \overline{A} \setminus \textbf{0} \rightarrow \R^+ $ such that $ L_2 = \mu L_1 $ (see \cite[Thm. 3.11]{JS20}).}
\end{prop}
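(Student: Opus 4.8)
The plan is to read ``uniquely determines a class'' as the statement that the fiber
$$
\mathfrak{L}(\C) := \{ L : L \text{ is a Lorentz-Finsler metric on } M \text{ whose cone structure is } \C \}
$$
is a well-defined object attached to $\C$, so that the genuine content reduces to two assertions: that $\mathfrak{L}(\C)$ is non-empty, and (the footnote) that any two of its members differ by a smooth positive factor. The assignment $L \mapsto \C_L$ of Prop.~\ref{prop:cone_lf} makes $\mathfrak{L}(\C)$ unambiguous by definition, so I would spend no effort on the ``uniqueness of the class'' and concentrate on exhibiting one metric in $\mathfrak{L}(\C)$.

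For non-emptiness I would run the cone-triple machinery in reverse. First, apply the lemma guaranteeing a timelike one-form $\Omega$ together with an $\Omega$-unit timelike vector field $T$; then Thm.~\ref{th:cone_triple} supplies the unique Finsler metric $F$ on $\textup{Ker}(\Omega)$ for which $\C$ is associated with the triple $(\Omega,T,F)$ through \eqref{eq:cone_triple}. Setting $G = \Omega^2 - F^2$, the proposition on $G$ gives smoothness on $TM \setminus \textup{Span}(T)$ with non-degenerate fundamental tensor of index $n$, and a one-line check using the splitting $v = \Omega(v)T_p + \pi^{\Omega}(v)$ of \eqref{eq:pi_omega} identifies the future boundary $\{G = 0,\ \Omega > 0\}$ of the domain $\{\Omega > F \circ \pi^{\Omega}\}$ with $\C$ via \eqref{eq:cone_triple}. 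Thus $G$ is a proper Lorentz-Finsler metric near $\C$ with the correct cone; the only defect is the failure of smoothness along $\textup{Span}(T)$, which I would repair by invoking Rem.~\ref{rem:lf_metric}: replace $G$ inside a small neighborhood $U$ of the section $T$ to obtain a genuine Lorentz-Finsler metric $L$ on all of $TM$ agreeing with $G$ off $U$. Since the modification takes place away from $\C$, one has $\C_L = \C$, so $L \in \mathfrak{L}(\C)$ and the class is non-empty.

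For the footnote I would prove $L_2 = \mu L_1$ for $L_1,L_2 \in \mathfrak{L}(\C)$ by setting $\mu = L_2/L_1$ on the common domain $A$. This $\mu$ is smooth and positive on $A$ (since $L_1 > 0$ there) and positively $0$-homogeneous because both metrics are $2$-homogeneous, so it descends to the indicatrix. The crux---and the step I expect to be the main obstacle---is the smooth extension of $\mu$ across the lightlike boundary $\C$, where both $L_i$ vanish and $\mu$ is a priori a $0/0$ indeterminate form. Here I would use that each $L_i$ extends smoothly past $\C$ with non-degenerate fundamental tensor (the defining property of Lorentz-Finsler metrics), Taylor-expand numerator and denominator transversally to $\C$, and read off a finite positive limit with matching derivatives; this is exactly the content recorded in \cite[Thm.~3.11]{JS20}, which I would cite to conclude that $\mu$ is smooth and positive on all of $\overline{A} \setminus \textbf{0}$. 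The combination of the purely algebraic homogeneity reasoning on $A$ with the transversal non-degeneracy along $\C$ is what makes the extension go through.
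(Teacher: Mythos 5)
Your proposal is correct and takes essentially the same route as the paper, which offers no separate proof but points precisely to the preceding cone-triple discussion --- the lemma producing $(\Omega,T)$, Thm.~\ref{th:cone_triple}, the metric $G=\Omega^2-F^2$, and its smoothing near $\textup{Span}(T)$ via Rem.~\ref{rem:lf_metric} (i.e.\ \cite[Thm.~5.6]{JS20}) --- for non-emptiness, and to \cite[Thm.~3.11]{JS20} for the anisotropic-equivalence footnote. Your sketch of why $\mu=L_2/L_1$ extends smoothly and positively across $\C$ (each $L_i$ vanishes to exactly first order along $\C$ since $dL_i=2g_{v}(v,\cdot)\neq 0$ by non-degeneracy of the fundamental tensor, after extending both metrics past $\C$ to a common conic neighborhood) is a faithful rendering of the content of that cited theorem.
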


Moreover, all these Lorentz-Finsler metrics share the same lightlike pregeodesics,\footnote{Geodesics are smooth autoparallel curves for the Chern connection of $L$. Recall that $L$ can be extended to some conic domain $ A^* $ which includes $ \overline{A} \setminus \textbf{0}$. The Chern connection can be defined on all $A^*$ and it is uniquely determined on $A$.} which coincide with those of $\C$ (\cite[Thm. 6.6]{JS20}):

\begin{thm}
A  curve $ \gamma: I \rightarrow M $ is a cone geodesic for a cone structure $ \mathcal{C} $ if and only if $ \gamma $ is a lightlike pregeodesic for one (and then, for all) Lorentz-Finsler metric $ L $ with cone structure $ \mathcal{C} $.
\end{thm}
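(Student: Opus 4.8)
The plan is to reduce the statement to a single conveniently chosen metric and then to settle the equivalence by a local causality argument.

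First I would establish that the class of lightlike pregeodesics is the same for every $L$ sharing the cone structure $\mathcal{C}$, so that the parenthetical ``one (and then, for all)'' is justified and the problem depends only on $\mathcal{C}$. By Prop.~\ref{p_anisotrop}, any two such metrics satisfy $L_2=\mu L_1$ for a smooth positive $\mu$ on $\overline{A}\setminus\mathbf{0}$. On the cone one has $L_1=L_2=0$, so along a lightlike direction $v$ the differential of $L_2$ reduces to $\mu\, dL_1$; consequently the geodesic sprays of $L_1$ and $L_2$, restricted to $\mathcal{C}$, differ only by a term proportional to the Liouville (radial) vector field. This is precisely the projective condition guaranteeing that a lightlike geodesic of one metric becomes a lightlike geodesic of the other after a reparametrization, so the two share their lightlike pregeodesics. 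It then suffices to prove the equivalence for one $L$, for which I would take $G=\Omega^2-F^2$ from a cone triple, which is smooth with non-degenerate fundamental tensor on a neighborhood of $\mathcal{C}$, hence admits a Chern connection and an exponential map on the conic domain $A^*$ containing $\overline{A}\setminus\mathbf{0}$.

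For the forward implication (lightlike pregeodesic $\Rightarrow$ cone geodesic) I would work in a convex normal neighborhood $U$ of a point $\gamma(t_0)$, built from this exponential map. The key local fact is that in such $U$ the horismotic boundary is generated by lightlike geodesics: $p\rightarrow_U q$ holds exactly when $q=\exp_p(w)$ for some future-directed lightlike $w$, and the radial curve $s\mapsto\exp_p(sw)$ realizes this horismos while its interior points lie outside $I^+(p)$. Given a lightlike geodesic $\gamma$, shrinking to a small $I_\varepsilon$ places $\gamma(I_\varepsilon)$ inside such a $U$, and the characterization yields $t_1<t_2\Leftrightarrow\gamma(t_1)\rightarrow_U\gamma(t_2)$, which is exactly the definition of a cone geodesic; reparametrizations are harmless since the horismotic relation ignores parametrization.

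For the converse (cone geodesic $\Rightarrow$ lightlike pregeodesic) I would argue that a locally horismotic curve must lie on the horismotic boundary of $\gamma(t_0)$ in $U$, which by the same local picture is foliated by the lightlike geodesic generators emanating from $\gamma(t_0)$; a curve horismotically related to its own points at every scale can neither enter $I^+$ nor $I^-$ of those points, and hence cannot leave a single generator, forcing it to coincide up to reparametrization with a lightlike geodesic. I expect the main obstacle to be exactly this local causality input for a genuinely Finslerian (non-Lorentzian) $\mathcal{C}$: namely, the construction of convex neighborhoods, the clean identification of $J^+(p)\setminus I^+(p)$ with the lightlike exponential image, and the absence of local lightlike conjugate or cut points. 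This is where the smooth non-degenerate extension of $L$ across $\mathcal{C}$ and the geodesic theory of the Chern connection on $A^*$ are indispensable; once this local structure is in place, both implications follow directly from the definition of cone geodesic.
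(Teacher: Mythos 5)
First, a point of comparison: the paper does not actually prove this statement — it is quoted verbatim from \cite[Thm.~6.6]{JS20} in the preliminaries — so your proposal must be judged on its own merits. As an outline it follows the route one would expect (convex neighborhoods plus a local description of the horismos relation), but the two load-bearing steps are asserted rather than proved. The serious one is the local causality input that you yourself flag as ``the main obstacle'': that in a convex neighborhood $U$ one has $q\in J^+_U(p)\setminus I^+_U(p)$ exactly when $q=\exp_p(w)$ for some future-directed lightlike $w$, and that a broken lightlike geodesic between horismotically related points can be deformed into a timelike curve. For a genuinely Lorentz--Finsler $L$ these facts are the actual mathematical content of the theorem: they require Whitehead's convexity theorem for sprays \cite{W1,W2}, a Finslerian analogue of the Gauss lemma, and the corner-smoothing deformation argument (cf. \cite[Prop.~6.4, Thm.~6.9]{AJ} and \cite{Minguzzi}). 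Since, as you correctly say, both implications are short consequences of this package, deferring the package means deferring essentially the whole proof; what you have is a correct reduction to unproven lemmas, not a proof.

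Second, your opening reduction is both under-justified and avoidable. From $L_2=\mu L_1$ and $L_1=0$ on $\mathcal{C}$ you obtain the first-order identity $dL_2=\mu\, dL_1$ along $\mathcal{C}$, but a geodesic spray is second-order data, so this alone does not yield ``the sprays differ by a radial term'': to make that inference you need either the Hamiltonian/characteristics formulation (the spray as the Hamiltonian vector field of the energy, with $X_{L_2}=\mu X_{L_1}$ on the common zero level set, where non-degeneracy of the fundamental tensors near $\mathcal{C}$ must be invoked) or the explicit computation, which is nontrivial enough to be the subject of a separate paper — precisely \cite{JavSoa20}, which this paper cites for that invariance. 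Moreover, the reduction is unnecessary: if you prove ``lightlike pregeodesic of $L$ $\Leftrightarrow$ cone geodesic of $\mathcal{C}$'' directly for an \emph{arbitrary} compatible $L$ (the local machinery above applies verbatim to any such $L$, not only to $G=\Omega^2-F^2$), then the clause ``one (and then, for all)'' is automatic, because cone geodesics depend only on $\mathcal{C}$; the anisotropic conformal invariance of lightlike pregeodesics then drops out as a corollary instead of serving as a prerequisite whose proof is the weakest link in your chain. A final small point: cone geodesics are only required to be continuous, so in the converse direction you must also argue that the monotonicity forced by the horismos condition produces an admissible reparametrization of the null generator (and fixes the future-directed orientation), rather than merely that the image lies on a single generator.
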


Observe that the focal points of these lightlike geodesics are also preserved in the whole anisotropically conformal class \cite{JavSoa20} and these geodesics coincide with those of \cite{Mak18}.

\section{General setting and Huygens' principle}
\label{sec:general_setting}

\subsection{Basics of the model} \label{subsec:general_setting}
Let us start with some notation for modeling the propagation of an anisotropic wave. The space where the wave propagates can be an arbitrary smooth manifold $ \hat{N} $ of dimension $ n \geq 2 $. Nevertheless, global properties will be easy to deduce from the local ones, so for computations we can assume that there is a global chart $\phi$ and work with $ N\coloneqq\phi(\hat N) \subset \R^n $ in natural coordinates $x=(x^1,\ldots,x^n) $. To include the (non-relativistic) time $t$ in the model, we define the {\em spacetime} $ M \coloneqq \R \times N $, being $ t: M \rightarrow \R $ the natural projection. Other useful natural projections will be $ \pi^M: TM \rightarrow M $, $ \pi^N: TN \rightarrow N $ and $ \pi_N: M \rightarrow N $.

At each point $ p = (t,x) \in M $ the wave propagates in all spatial directions, although, in general, its velocity may vary from one direction to another. Mathematically, we will assume that the propagation of the wave at each $ p \in M $ is given by a (strongly convex) oval $ \Sigma_p $\footnote{This is, $ \Sigma_p $ is diffeomorphic to a sphere and its second fundamental form with respect to one (and then all) transversal vector field is (positive or negative) definite.} on the vector space $ \textup{Ker}(dt_p) = T_p(\{t\}\times N) \equiv T_xN $. This means that a vector $ v \in \textup{Ker}(dt_p) $ represents the velocity of the wave in the spatial direction determined by $ v \in T_xN $ at the time $ t \in \R $ if and only if $ v \in \Sigma_p $. If the zero vector lies in the open region enclosed by $\Sigma_p$, then $ \Sigma_p $ defines a  Minkowski norm $ F_p $ on $ \textup{Ker}(dt_p) $, being $ \Sigma_p $ its indicatrix (see \cite[Thm. 2.14]{JS14}). In this case, a hypersurface $ \Sigma \subset \text{Ker}(dt) $ of such ovals $ \Sigma_p $ varying smoothly with $ p \in M $ (in the sense that $ \Sigma $ is transverse to the fibers of $ \text{Ker}(dt) $, as in Def. \ref{def:cone_structure}) determines a Finsler metric $ F $ on the vector bundle $ \textup{Ker}(dt) \subset TM$, whose unit vectors represent the velocities of the wave on $ N $. For the convenience of the reader, we will restrict ourselves to this case. However, we will see in $\S \ref{s6}$ that the approach can be easily extended if the unit ball does not contain the zero vector at some points.
 
Next, our aim will be to determine the propagation of the wave assuming that $ \Sigma $ (and hence $ F $) is known. 

\subsection{Cone structure}
The previous elements allow one to introduce the cone triple $(\Omega \coloneqq dt,T \coloneqq \partial/\partial t,F) $ in $ M = \R \times N $ (so that $ \Sigma = (\mathcal{C} \cap dt^{-1}(1))-\frac{\partial}{\partial t} $) and, thus, the corresponding cone structure $\C$
and Lorentz-Finsler metric $ G = dt^2-F^2 $, according to \S \ref{s2.conetriples}.

\begin{notation}
\label{notacion}
In general, a vector $ \hat{v} \in T_pM = \text{Span}(\frac{\partial}{\partial t}\rvert_p) \oplus \text{Ker}(dt_p) $ will be written in the form (recall \eqref{eq:pi_omega})
$$
\hat v = \tau \left.\frac{\partial}{\partial t}\right\rvert_p + v \equiv (\tau,v), \text{ with } v=\pi^{\Omega}(\hat{v}) \in \text{Ker}(dt_p) (\equiv T_xN).
$$
Notice the last natural identification $ \text{Ker}(dt_p)\equiv T_xN $, to be used when convenient.
\end{notation}

In a natural way, a particle will be represented by a curve $\hat\gamma$ in the spacetime $M$ parametrized by the time $t$. By construction, the particle moves at the same speed as the wave (at each space point, instant and direction) if and only if $\hat\gamma(t)=(t,\gamma(t))$ is $\C$-lightlike. Indeed, from \eqref{eq:cone_triple}, $ \hat\gamma'(t) = (1,\gamma'(t)) $ is lightlike if and only if $ \gamma'(t) $ is $ F $-unit, i.e., $ \gamma'(t) \in \Sigma_p $ ($ \gamma'(t) $ coincides with the velocity of the wave through the space $ N $).

As happens for any cone triple, no causal vector is tangent to the slices $ \lbrace t=t_0 \rbrace \coloneqq \lbrace t_0 \rbrace \times N$. This means that, for any future-directed timelike curve $ \hat{\gamma}$, the composition $ t \circ \hat{\gamma} $ is strictly increasing and $ t: M \rightarrow \R $ is a temporal function. The intrinsic properties of $\C$ are independent of the selected cone triple. Nevertheless, it is necessary to work specifically with $ (dt,\partial/\partial t,F) $ here because it settles the time flow, the slices at constant time, etc. Physically, the choice of this specific cone triple means that $ \Sigma_p $ is the {\em infinitesimal wavefront}, i.e., the wavefront after one time unit for a wave starting at the origin of the tangent space $ \textup{Ker}(dt_p) $, assuming that the initial conditions at $ p $ remain constant.

\subsection{Wavefronts}
The wavefront at each instant of time will be given by the generalized time-dependent version of Huygens' envelope principle, which we call {\em anisotropic rheonomic Huygens' principle}: each point of the wavefront front$({t_0})$ at time $ t = t_0 $ becomes the source of a secondary wave, so that the wavefront front$({t_1})$ at $ t = t_1 $ is the envelope of these secondary waves of lapse $ t_1-t_0 $.

Since we have established that particles moving at the same speed as the wave are lightlike curves, the spatial points that can be reached by a wave starting at $ p_0 = (t_0,x_0) \in M $ are the projections of those in the causal future $ J^+(p_0) $, and thus the wavefront generated by a single point after a lapse $ t_1-t_0 $ consists of the outermost points reached by the wave at $ t = t_1 $, i.e., $ \partial J^+(p_0) \cap \lbrace t=t_1 \rbrace $. Therefore, Huygens' principle implies
\begin{equation}\label{e_wavefront_hatf}
\hbox{front}({t_1}) = \partial \left(\cup_{p\in \hbox{{\small front}}({t_0})} J^+(p)\right) \cap \lbrace t=t_1 \rbrace = \partial J^+({\mbox{front}({t_0})}) \cap \lbrace t=t_1 \rbrace
\end{equation}
and we will know how the wave expands over time by finding $ \partial J^+(\mbox{front}({t_0}))$.

\begin{rem}
Technically, we can only ensure that the wave reaches the boundary when the spacetime $ M $ is {\em causally simple}.\footnote{In our setup, this means that all $ J^{\pm}(p) $ are closed (thus, equal to the closure  $\overline{ I^{\pm}(p)}$).} Otherwise, the wave might not reach $ \partial J^+(\mbox{front}({t_0})) \cap \lbrace t=t_1 \rbrace $ but only remain arbitrarily close (anyway, it should be regarded  as the wavefront front$(t_1)$ even in this case). However, in the cases we are interested in, the spacetime will satisfy not only causal continuity but also the stronger condition of {\em global hyperbolicity},\footnote{In our setup, all $J^+(p)\cap J^-(q)$ compact, see \cite{BEE,O,JS20} for background.} which implies many well-known global properties.

Indeed, in our models we can assume that $N$ is the whole $\R^n$ and, moreover, $\C$ is the cone structure determined by making $F$ equal to the usual Euclidean norm outside some compact subset $C\subset \R^n$ (i.e. $\C$ would be the cone structure of a classical Lorentz-Minkowski spacetime on $\R\times (\R^n\setminus C)$). For this spacetime, one can verify easily that all its slices $\{t=t_0\}$ become Cauchy hypersurfaces (i.e. they are crossed by any inextendible causal curve),\footnote{More precisely, it is easy to check that if a causal curve $\hat \gamma(t)=(t,\gamma(t))$, $t\in I\subset \R$, cannot be continuously extended to the endpoints of $I$, then $I=\R$.} which is a standard sufficient condition for global hyperbolicity. It is worth pointing out that any cone structure is locally globally hyperbolic.
\end{rem}

\begin{conv}\label{convention}
In what follows, we will work with the manifold $M=\R\times N$ endowed with the cone structure $\C$ determined by a cone triple $(\Omega=dt, T=\partial/\partial t,F)$, where $F$ is a Finsler metric on Ker$(dt)$ (so, identifiable with a $t$-dependent Finsler metric on $N$). Thus, $\C$ is also determined by the Lorentz-Finsler metric  $G=dt^2-F^2$ (whose lack of smoothness in the direction $\partial/\partial t$ becomes irrelevant) with fundamental tensor
$g^G=dt^2-g^F$ in \eqref{eq:rel_g_gF}.
 
For global properties, we will assume only that $(M=\R\times N, \C)$ is globally hyperbolic with Cauchy hypersurfaces $\{t=t_0\}$ (which is not restrictive for modeling). For convenience, we will assume $N\subset \R^n$, which is  restrictive neither for local computations nor for modeling.

Causal curves $\hat{\gamma}$ in $M$ will be assumed $t$-parametrized and then written $\hat{\gamma}(t)=(t,\gamma(t)), t\in I (\subset \R)$, thus future-directed. 
\end{conv}

\subsection{The case of wildfires}
\label{subsec:wildfires}
When modeling wildfire spreadings, $ N $ plays the role of a two-dimensional surface embedded in $ \R^3 $ through a graph $ \hat{z} $:
$$
\hat{z}: N \subset \R^2 \rightarrow \hat{N} \subset \R^3, \qquad 
(x,y) \mapsto \hat{z}(x,y) \coloneqq (x,y,z(x,y)),
$$
where we use the notation $ (x,y) \coloneqq (x^1,x^2) $ and $ \hat{N} \coloneqq \hat{z}(N) $ is the actual surface in $ \R^3 $ over which the fire spreads (see Fig. \ref{fig:surface}).

\begin{figure}
\centering
\includegraphics[width=1\textwidth]{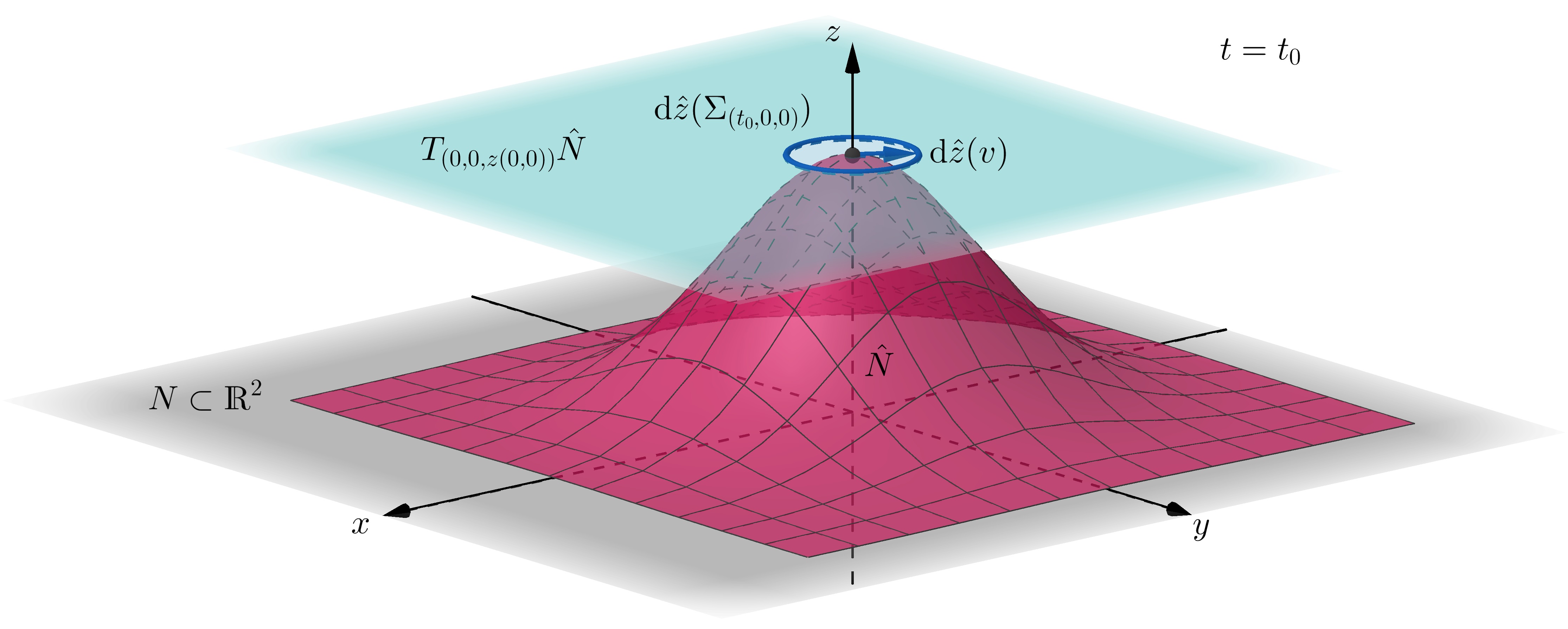}
\caption{The wildfire takes place on a surface $ \hat{N} \subset \R^3 $, with the indicatrix $ \df \hat{z}(\Sigma_p) $ on its tangent plane giving the velocity of the fire for each direction at the point $ p = (t,x,y) = (t_0,0,0) $ (implicitly, the third spatial coordinate is always assumed to be $ z(x,y) $).}
\label{fig:surface}
\end{figure}

At each point $ p = (t,x,y) \in M $, the propagation of the fire is given by the oval $ \Sigma_p $ on $ \textup{Ker}(dt_p) $. In practice, the choice of the oval at each point will depend on the fuel conditions, the wind, the slope of the surface and meteorological conditions such as the temperature, whether it is raining or not, etc. In order to model a realistic wildfire, we must allow each parameter to vary from point to point of the spacetime, i.e., they may vary in space and also in time (except for the slope, which obviously remains the same over time, inducing a Riemannian metric on $\hat N$ and thus, on $N$). The oval $ \Sigma_p $ should precisely model the infinitesimal firefront. Note that the vectors given by $ \Sigma $ are velocities on $ N $, i.e., they are the projection of the actual velocities of the firefront on $ \R^3 $, which are given by $ \df \hat{z}(\Sigma) $. Since $ \Sigma $ and $ \df \hat{z}(\Sigma) $ are equivalent (one uniquely determines the other), we can work only with $ \Sigma $ (see Fig. \ref{fig:indicatrices}).

\begin{figure}
\centering
\includegraphics[width=0.8\textwidth]{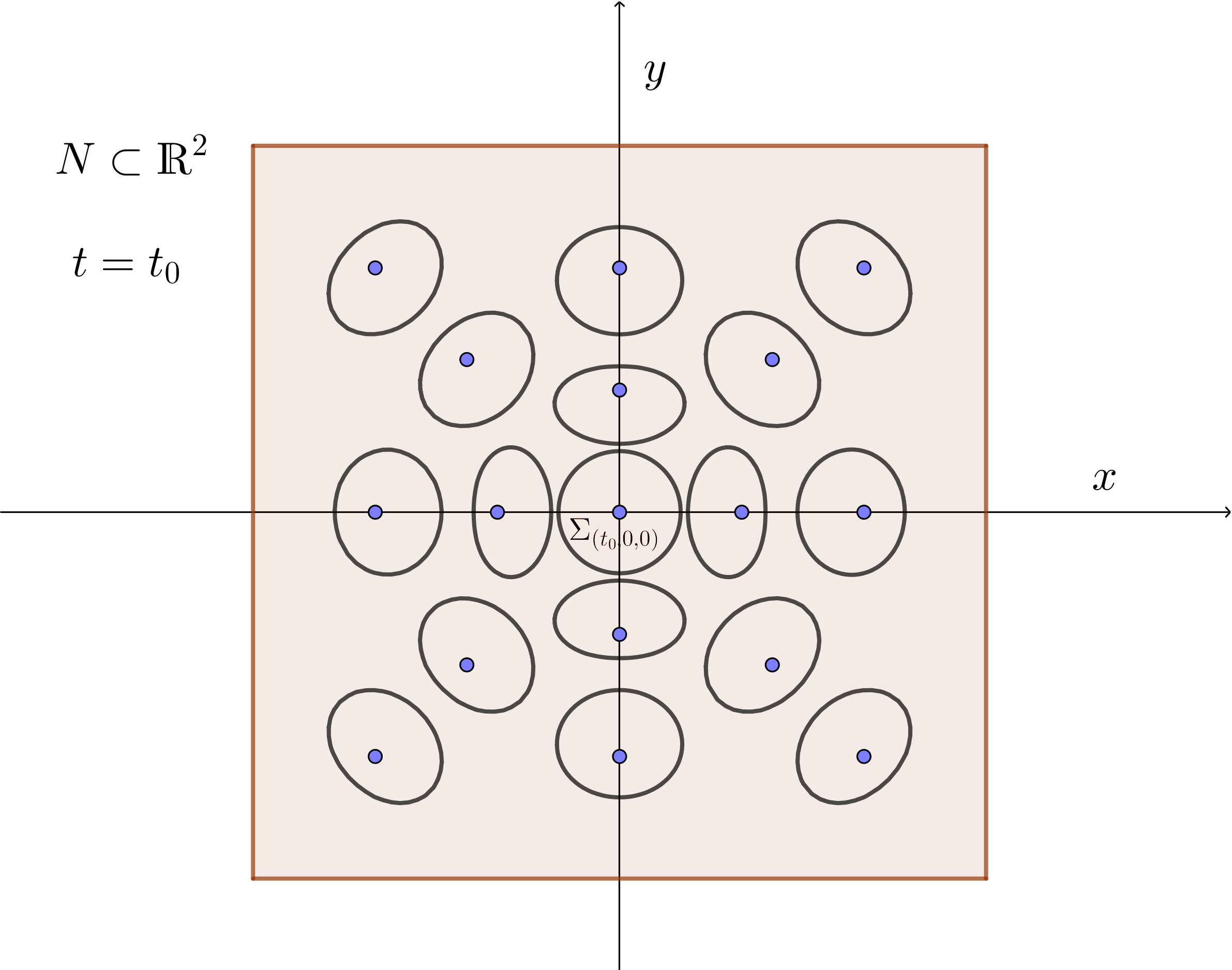}
\caption{A smooth field of strongly convex ovals  for modeling  the wildfire spread on the surface of Fig. \ref{fig:surface}. The oval $ \Sigma_p $ at each point $ p = (t_0,x,y) $ is the projection of the actual velocities on $ T_{(x,y,z)}\hat{N} $  of the firefront at $ t=t_0 $ (i.e., $ \Sigma $ contains these velocities seen from an aerial view).}
\label{fig:indicatrices}
\end{figure}

If $ B_0 = \{0\} \times B $ is the initial burned area of the wildfire, represented by a compact hypersurface of $ M $ included in $ \lbrace t=0 \rbrace $ with boundary (or simply by a unique point), then $ S_0 \coloneqq \partial B_0 $ is the initial firefront. Note that $ \partial J^+(S_0) $ provides two firefronts: the one that heads out from $ B_0 $, and the one that goes inwards. Since $ B_0 $ is already a burned area, the wavefront of interest in this case is the one pointing outwards, i.e., $ \partial J^+(B_0)$, and we have a formal model as in the case of wavefronts. Therefore, $ \overline{J^+(B_0)} \cap \lbrace t=t_0 \rbrace $ represents the total burned area at the time $ t = t_0 $ and $ \partial J^+(B_0) \cap \lbrace t = t_0 \rbrace $, the firefront at $ t = t_0 $ (see Fig. \ref{fig:cones}).

\begin{figure}
\centering
\includegraphics[width=1\textwidth]{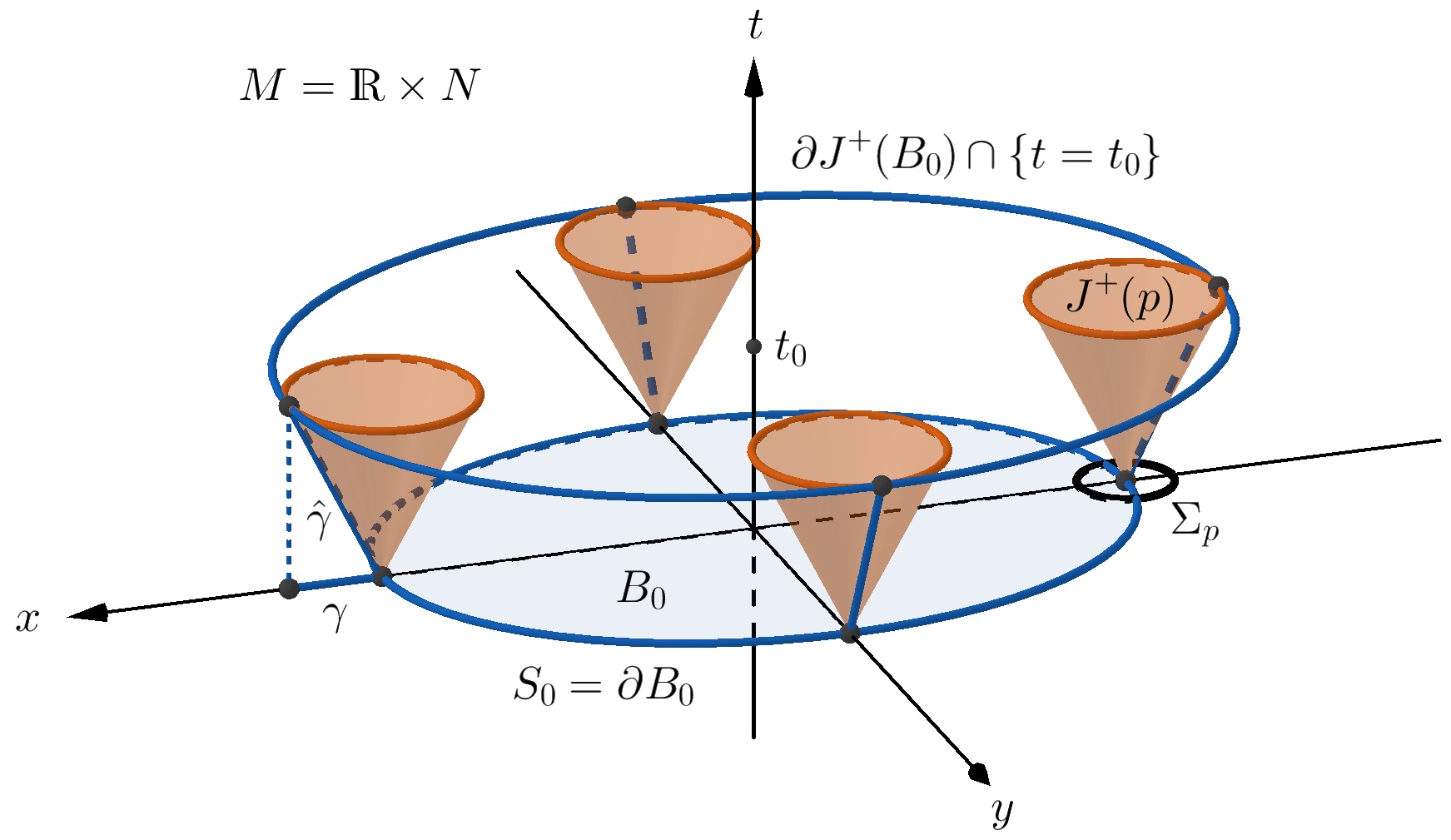}
\caption{A simple representation of wildfire evolution. The fire starts at $ t = 0 $ from the boundary $ S_0 $ of the initial burned area $ B_0 $. The causal future $ J^+(p) $ at each point $ p \in B_0 $ shows the region in the spacetime that can be reached by a point-ignited fire at $ p $. The envelope of all these chronological futures, $ \partial (\cup_{p \in B_0}J^+(p)) = \partial J^+(B_0) $, generates the outermost burned points for each instant of time, so that the intersection with $ \lbrace t=t_0 \rbrace $ is the firefront at $ t = t_0 $. The spatial  trajectory of the fire $ \gamma $ from a point at $ \partial B_0 $ is the projection of the unique causal curve $ \hat{\gamma} $ from that point entirely contained in $ \partial J^+(B_0) $; $ \hat{\gamma}$ becomes a cone geodesic and minimizes  the propagation time, at least for small $t$.}
\label{fig:cones}
\end{figure}

\section{Computation of the wavefront}\label{s4}
The initial wavefront  will be assumed to be any compact\footnote{Physically, compactness is not restrictive at all, since the wavefront must be bounded. Anyway, one can also consider here precompact manifolds with trivial modifications. As an application, this  would allow us to study compact manifolds with boundary by focusing only on their interior and, thus, avoiding the nuisance of the boundary.} embedded submanifold $ S_0= \{0\}\times S \subset M (=\R\times N)$, where $S$ has codimension $ r $ in $N$, with $1 \leq r \leq n$ (for $r= n $, $ S_0 $ is a finite number of points).

\subsection{Wavemap and minimization of the propagation time}\label{s4.1}
Let $ S_0^{\bot_G} $ (resp. $ S_0^{\bot_F} $) be the set of vectors in $ TM $ (resp. $ TN $) which are $ G $-orthogonal (resp. $ F $-orthogonal) to $ S_0 $ (recall Def. \ref{def:orth}). Note the following equivalences between working with $ G $ and $ F $: for any $ \hat{v} = (1,v) \in T_pM \setminus \text{Span}(\frac{\partial}{\partial t}\rvert_p), \hat{w} = (0,w) \in T_pM $,
\begin{equation}
\label{eq:equiv_G_F}
\begin{array}{l}
\hat{v} \text{ is lightlike} \Leftrightarrow G(\hat{v}) = 0 \Leftrightarrow F(v) = 1, \\
\hat{v} \bot_G \hat{w} \Leftrightarrow g^G_{\hat{v}}(\hat{v},\hat{w}) = 0 \Leftrightarrow g^F_{v}(v,w) = 0 \Leftrightarrow v\bot_F w
\end{array}
\end{equation}
(recall \eqref{eq:rel_g_gF} for the last one), so that
\begin{equation}
\label{eq:nu}
\begin{split}
\nu(S_0) \coloneqq & \{\hat{u}=(1,u)\in T_pM: p \in S_0, G(\hat{u})=0, \hat{u} \in S_0^{\bot_G}\} \\
= & \{\hat{u}=(1,u)\in T_pM: p \in S_0, F(u)=1, u \in S_0^{\bot_F}\}
\end{split}
\end{equation}
is the {\em normalized lightlike orthogonal bundle to} $S_0$.

\begin{lemma} $\nu(S_0)$ is a fiber bundle on $S_0$ with fiber diffeomorphic to the standard sphere $ \mathds{S}^{r-1}$.
\end{lemma}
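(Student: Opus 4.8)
The plan is to reduce the claim to a pointwise statement about the fibers and then assemble the bundle structure from smooth dependence on the base point. Since every element of $\nu(S_0)$ has the fixed first component $\tau=1$, the natural projection $\Pi\colon\nu(S_0)\to S_0$ is, under the identification $S_0\cong S$, the map $(1,u)\mapsto s$ with footpoint $p=(0,s)$, and (using $\textup{Ker}(dt_p)\equiv T_sN$) its fiber is
\[
\mathcal{N}_s:=\{u\in T_sN:\ F(u)=1,\ u\perp_F T_sS\}.
\]
So it suffices to prove that each $\mathcal{N}_s$ is diffeomorphic to $\mathds{S}^{r-1}$ and that these fibers glue smoothly over $S$.

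First I would linearize the orthogonality condition via the Legendre transform. By the homogeneity relation $g^F_u(u,w)=F(u)\,dF_u(w)$ (the content of \eqref{e3} for $L=F^2$), a vector $u$ with $F(u)=1$ satisfies $u\perp_F T_sS$ exactly when $dF_u$ annihilates $T_sS$. Introducing $\ell_s(u):=g^F_u(u,\cdot)\in T_s^*N$, which by strong convexity of $F$ (non-degeneracy of $g^F$) is a positively $1$-homogeneous diffeomorphism of $T_sN\setminus\{0\}$ onto $T_s^*N\setminus\{0\}$ carrying the indicatrix $\Sigma_s=\{F=1\}$ onto the dual indicatrix $\Sigma_s^*=\{F^*=1\}$, the membership $u\in\mathcal{N}_s$ becomes the \emph{linear} condition $\ell_s(u)\in(T_sS)^0$, where $(T_sS)^0\subset T_s^*N$ is the $r$-dimensional annihilator of the $(n-r)$-dimensional space $T_sS$. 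Hence $\mathcal{N}_s=\ell_s^{-1}\big(\Sigma_s^*\cap(T_sS)^0\big)$, and since $\ell_s$ restricts to a diffeomorphism $\Sigma_s\to\Sigma_s^*$, we obtain $\mathcal{N}_s\cong\Sigma_s^*\cap(T_sS)^0$.

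Next I would identify this section with a sphere. The restriction of $F^*$ to the $r$-dimensional subspace $(T_sS)^0$ is smooth away from $0$, positive and positively $1$-homogeneous, so the radial retraction $\theta\mapsto\theta/F^*(\theta)$ maps the standard sphere of $(T_sS)^0\cong\R^r$ diffeomorphically onto the level set $\Sigma_s^*\cap(T_sS)^0$; thus $\mathcal{N}_s\cong\mathds{S}^{r-1}$, the extreme cases $r=n$ and $r=1$ giving $\Sigma_s^*$ itself and a pair of points. The only minor check here is that each ray through a nonzero $\theta\in(T_sS)^0$ meets $\Sigma_s^*$ exactly once, which holds because $\Sigma_p$ encloses the origin, so that $F$, and hence $F^*$, is a genuine Minkowski norm.

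Finally, for the bundle structure I would observe that $(TS)^0\to S$ is a smooth rank-$r$ vector bundle on which $F^*$ restricts to a smooth fiberwise Minkowski norm, whose indicatrix (sphere) bundle is locally trivial: over a chart $O\subseteq S$ trivializing $(TS)^0\cong O\times\R^r$ the fiberwise radial retraction $(s,\theta)\mapsto(s,\theta/F^*_s(\theta))$ is a diffeomorphism $O\times\mathds{S}^{r-1}\to\big(\Sigma^*\cap(TS)^0\big)\big|_O$. Pulling this back by the fiber-preserving, smooth Legendre diffeomorphism $\ell$ and reinstating the constant coordinate $\tau=1$ exhibits $\Pi\colon\nu(S_0)\to S_0$ as a locally trivial bundle with fiber $\mathds{S}^{r-1}$. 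The main obstacle is the very first conceptual step: the orthogonality defining the fibers is genuinely nonlinear because $g^F_u$ depends on $u$, and what makes the rest routine is precisely that the Legendre transform converts it into the linear annihilator condition, after which strong convexity—equivalently, positivity and homogeneity of $F^*$—does all the work.
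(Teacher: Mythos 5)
Your proof is correct, but it takes a genuinely different route from the paper. The paper stays entirely in the tangent spaces: it first gets the manifold structure of $\nu(S_0)$ and of each fiber from the implicit function theorem, then describes the fiber as the set of $u\in\Sigma_p$ at which the affine plane $u+T_pS_0$ is tangent to $\Sigma_p$, and projects along $T_pS_0$ onto a linear complement $(T_pS_0)^c$; strong convexity of $\Sigma_p$ is used to show this projection embeds the fiber as a compact strongly convex hypersurface of an $r$-dimensional space, hence a sphere. You instead dualize: the Legendre transform $\ell_s(u)=g^F_u(u,\cdot)$ converts the nonlinear orthogonality condition into the linear annihilator condition $\ell_s(u)\in(T_sS)^0$, so the fiber becomes the indicatrix of the dual norm $F^*$ restricted to the rank-$r$ annihilator bundle, and local triviality of $\Pi$ falls out of fiberwise radial normalization in a trivializing chart of that vector bundle. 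What your approach buys is a transparent bundle structure (the sphere bundle of a normed vector bundle) and no need to verify that a projected image is strongly convex; the cost is that you import two standard but unproved facts --- that $\ell_s$ is a positively $1$-homogeneous diffeomorphism of the slit spaces carrying $\Sigma_s$ onto $\Sigma_s^*$, and that $F^*$ (and its restriction to a subspace) is again a Minkowski norm --- both of which do hold for strongly convex $F$ and depend smoothly on $s$ since $g^F$ is smooth, so there is no gap. The paper's argument, by contrast, is self-contained in $TN$ and reuses the same convexity toolkit ($\Sigma_p$ a strongly convex oval) that drives the rest of the paper. Note also that your linearization $g^F_u(u,w)=F(u)\,dF_u(w)$ is the primal counterpart of the paper's tangency description: saying $dF_u$ annihilates $T_sS$ is exactly saying $u+T_pS_0$ is tangent to $\Sigma_p$ at $u$, so the two proofs are dual pictures of the same geometric fact.
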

\begin{proof} 
By the implicit function theorem, $\nu(S_0)$
(regarded as the $F$-unit vectors in $S_0^{\perp_F}$ by the second line in \eqref{eq:nu})
 is an $ (n-1) $-dimensional manifold and each $\nu(S_0)_p$ is an $ (r-1) $-dimensional submanifold of $ \textup{Ker}(dt_p) $ (see, e.g., \cite[Lem. 3.3]{JavSoa15}). So, it is enough to construct a diffeomorphism from 
 $\nu(S_0)_p$ to $ \mathds{S}^{r-1}$ smoothly depending on $p$. 
The former is the set containing all $u\in \Sigma_p$ such that the affine subspace through $u+T_pS_0$ is tangent to $\Sigma_p$. 
Moreover, all these subspaces $u+T_pS_0, u\in (S_0^{\perp_F})_p$ become a cylinder with base $(S_0^{\perp_F})_p$ whose affine second fundamental form is positive semi-definite with 
radical identifiable to $u+T_pS_0$, and each  $u+T_pS_0$ 
intersects $\Sigma_p$ only at $u$ (because $\Sigma_p$ is strictly convex). So, taking any $r$-linear complementary subspace  $(T_pS_0)^c$ (such that 
$\text{Ker}(dt_p)=T_pS_0 \oplus (T_pS_0)^c$), the map 
$$
\nu(S_0)_p \ni u \rightarrow (u+T_pS_0)\cap (T_pS_0)^c
\in (T_pS_0)^c$$ 
is an embedding of $\nu(S_0)_p$ in $(T_pS_0)^c$ as 
 a compact, embedded, strongly convex hypersurface and, then, a diffeomorphism onto $ \mathds{S}^{r-1} $.
\end{proof}

\begin{conv}
\label{convention2}
Locally, $\nu(S_0)$ is diffeomorphic to $S\times \mathds{S}^{r-1}$ and, when working in coordinates, we can assume that this property holds globally, i.e., $\nu(S_0) \cong S \times \mathds{S}^{r-1}$.\footnote{It will be satisfied automatically in the case of wildfires ($S_0=\partial B_0$, $n=2$, $r=1$), as $\mathds{S}^{0}$ only contains two points, and the one pointing outwards from $B_0$ is selected.} So, any $ \hat u\equiv (1,u)\in \nu(S_0)$ at $ p=(0,s) \in S_0 $ can be identified with $(s,u)\in S\times \mathds{S}^{r-1}$, where $u\in \mathds{S}^{r-1} \cong \Sigma_{(0,s)}\cap S_0^{\perp_F} \subset $ Ker$(dt_{(0,s)}) \equiv T_sN$, consistently with Notation \ref{notacion}.
\end{conv}

Using the convention above, the {\em wavemap}
\begin{equation} \label{def:wave_map}
\hat{f}: [0,\infty) \times  \nu(S_0)  \rightarrow M , \qquad  
(t,\hat u) \equiv (t,s,u) \mapsto \hat{f}(t,s,u) 
\end{equation}
is defined so that for each $\hat u=(1,u)\in \nu(S_0)$ ($u$ tangent to $ s \in S$), the curve $ t \mapsto \hat{f}(t,s,u) $ is the unique cone geodesic $t$-parametrized  with initial velocity $ \hat u$. Clearly, this function is smooth for $t>0$ (as so is the exponential map of $G$ on lightlike directions) and continuous at $t=0$. From Thm. \ref{cor:zermelo} below,  $\hat f([0,\varepsilon),\nu(S_0))$ becomes an embedded topological hypersurface of $M$ which is smooth up to $S_0$, for small $\varepsilon>0$.

As $ \partial J^+(S_0) $ generates the wavefront for each instant of time (recall \eqref{e_wavefront_hatf} with $S_0=$front$(0)$), the causal curves starting at $S_0$ contained in $ \partial J^+(S_0) $ can be regarded as the (outermost) spacetime trajectories of the wave. These curves will represent first-arriving perturbations in the following sense. 

\begin{defi}
Let $ \hat{\gamma}(t)=(t,\gamma(t)), t\in I$, be a  causal curve departing from $S_0$. We say that $\hat\gamma$ is {\em first-arriving} (resp. {\em strictly first-arriving}) if, for each $t_0\in I$, $x_0=\gamma(t_0)$, any other causal curve $\hat{\alpha}$ departing from $S_0$ with $\alpha(t_1)=x_0$ satisfies $t_1\geq t_0$ (resp. $t_1 > t_0$).\footnote{This is equivalent to saying that $\hat{\gamma}$ is a solution of Zermelo's navigation problem, see \cite{CJS,JS20}.} 

In this case, $\gamma$ is a {\em spatial trajectory} of the wave. 
\end{defi}

However, let us see that the unique causal curves in $ \partial J^+(S_0) $ (and, thus, the unique  first-arriving curves) will be its so-called null generators. Indeed, $ \partial J^+(S_0)$ is an {\em achronal boundary},\footnote{Notice that, for any  $ S_0 \subset M $, $ \overline{J^+(S_0)} = \overline{I^+(S_0)} $ and, thus, $ \partial J^+(S_0) = \partial I^+(S_0)$.} where achronal means that no pair of its points can be connected by a timelike curve entirely contained in  $ \partial J^+(S_0)$. The theory of these boundaries is well-established in the Lorentzian setting \cite{G} and we will use in the next proposition only some properties which can be directly transplanted to the Lorentz-Finsler setting \cite{AJ,JS20,Minguzzi} (anyway, detailed computations will be available in \cite{tesis}).

\begin{prop}
\label{prop:zermelo}
$ \partial J^+(S_0)\setminus S_0$ is a locally Lipschitz hypersurface and it admits a unique foliation by lightlike geodesics (null generators) of $G$, i.e., cone geodesics of $\C$. Moreover, because of the global hyperbolicity of $\C$ (Conv. \ref{convention}), such a geodesic $ \hat \gamma $ must always reach $ S_0 $ once and, at that point, $ \hat\gamma $ must be $ G $-orthogonal to $ S_0 $.
\end{prop}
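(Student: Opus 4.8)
\emph{The plan is to} treat $\partial J^+(S_0)$ as an achronal boundary and transplant the standard Lorentzian structure theory to the Lorentz--Finsler metric $G$. First I would observe that, since $\overline{J^+(S_0)}=\overline{I^+(S_0)}$, the set $\partial J^+(S_0)=\partial I^+(S_0)$ is an achronal boundary, so the results of \cite{G} (in the Lorentz--Finsler form of \cite{AJ,JS20,Minguzzi}) apply: $\partial J^+(S_0)$ is a locally Lipschitz topological hypersurface, and through every point of $\partial J^+(S_0)\setminus S_0$ there passes exactly one null generator, which is a lightlike geodesic of $G$. By the theorem identifying cone geodesics with lightlike pregeodesics (stated just after Prop.~\ref{p_anisotrop}), these generators are precisely the cone geodesics of $\C$, which yields the asserted unique foliation. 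The same theory tells us that each generator is either past-inextendible in $M$ or has a past endpoint on $S_0$.

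Next I would rule out the past-inextendible case and locate the endpoint, using global hyperbolicity (Conv.~\ref{convention}). Causal simplicity makes $J^+(S_0)$ closed, so $\partial J^+(S_0)\subset J^+(S_0)\subset\{t\ge0\}$. Since no causal vector is tangent to the slices, $t$ is strictly increasing along any future-directed causal curve; hence, traversing a generator towards the past, $t$ decreases while staying $\ge0$, and because the slices $\{t=t_0\}$ are Cauchy the generator must actually reach $\{t=0\}$. There, the same strict monotonicity of $t$ gives $J^+(S_0)\cap\{t=0\}=S_0$, whence $\partial J^+(S_0)\cap\{t=0\}=S_0$ and the past endpoint lies on $S_0$. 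That monotonicity also forbids a causal curve from meeting the slice $\{t=0\}\supset S_0$ twice, so each generator reaches $S_0$ exactly once.

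Finally, orthogonality. Let $\hat\gamma$ be a generator with past endpoint $p=(0,s)\in S_0$ and fix $q=\hat\gamma(t_1)$ with $t_1>0$ small, so that $q\in\partial J^+(S_0)$ means $q\notin I^+(S_0)$. I would argue by contradiction via the standard deformation lemma for causal curves from a submanifold: if $\hat\gamma'(0)$ is not $G$-orthogonal to $T_pS_0$, there is $w\in T_pS_0$ with $g^G_{\hat\gamma'(0)}(\hat\gamma'(0),w)\neq0$. Taking $s(\delta)\in S_0$ with $s(0)=s$, $s'(0)=w$, and a variation of $\hat\gamma|_{[0,t_1]}$ through lightlike geodesics starting at $s(\delta)$ and keeping the future endpoint fixed at $q$, the first-variation formula for $G$ reduces (the interior term vanishing by the geodesic equation and the fixed endpoint contributing nothing) to a boundary term proportional to $g^G_{\hat\gamma'(0)}(\hat\gamma'(0),w)$. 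Choosing the sign of $w$ appropriately, this deforms $\hat\gamma$ to a strictly timelike curve from $s(\delta)$ to $q$, giving $q\in I^+(S_0)$, a contradiction. Hence $g^G_{\hat\gamma'(0)}(\hat\gamma'(0),w)=0$ for all $w\in T_pS_0$, i.e. $\hat\gamma'(0)\perp_G T_pS_0$; equivalently, by \eqref{eq:equiv_G_F}, the spatial velocity is $F$-orthogonal to $S$. The main obstacle I anticipate is making this last first-variation argument fully rigorous in the Lorentz--Finsler setting, where $G$ is only smooth off $\mathrm{Span}(\partial/\partial t)$ and orthogonality is governed by the direction-dependent tensor $g^G$ through \eqref{eq:rel_g_gF}; one must also guarantee that the varied curves remain lightlike, or can be perturbed to timelike, uniformly for small $\delta$.
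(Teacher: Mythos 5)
Your proposal is correct and follows the same skeleton as the paper's proof: the locally Lipschitz hypersurface structure and the unique ruling by null generators are quoted from the achronal-boundary theory transplanted to the Lorentz--Finsler setting (the paper cites exactly \cite{G,AJ,JS20,Minguzzi} for this), global hyperbolicity is used to force a past endpoint on $S_0$, and non-orthogonality is ruled out by deforming into the chronological future. The two places where you diverge are worth comparing. For the endpoint, the paper argues via compactness: $J^-(\hat\gamma(t))\cap J^+(S_0)$ is compact, so the generator has an initial point $p_0=\hat\gamma(t_0)$, and if $t_0>0$ then (by closedness of $J^+(S_0)$) a nontrivial causal curve from $S_0$ to $p_0$ exists; concatenating it with $\hat\gamma$ gives a causal curve from $S_0$ to $\hat\gamma(t)$ that is not an unbroken lightlike geodesic, whence $\hat\gamma(t)\in I^+(S_0)$, a contradiction. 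You instead invoke the structure-theorem dichotomy (past-inextendible or past endpoint on $S_0$) and kill the inextendible case with the Cauchy-slice property of Conv.~\ref{convention} together with the strict $t$-monotonicity; this is legitimate and arguably more elementary, though it leans on the endpoint-location clause of the structure theorem, which the paper's concatenation argument avoids. Your monotonicity argument for "reaches $S_0$ exactly once" is fine and matches the paper's intent.

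For orthogonality, the paper simply cites \cite[Prop.~6.4]{AJ}, whereas you sketch a first-variation proof, and here your sketch as written has two defects you should repair. First, a variation of $\hat\gamma|_{[0,t_1]}$ ``through lightlike geodesics starting at $s(\delta)$ and keeping the future endpoint fixed at $q$'' generally does not exist: lightlike geodesics issuing from nearby points of $S_0$ will not pass through the fixed point $q$. What you actually need is an arbitrary fixed-endpoint variation whose base curve is the geodesic $\hat\gamma$, so that the interior term of the first variation vanishes. Second, and more seriously, making the \emph{integral} first variation of the energy have the right sign does not by itself produce a \emph{pointwise} timelike longitudinal curve; the classical argument (O'Neill-style, \cite[Ch.~10]{O}) uses a specific variation field, e.g.\ $(1-t/t_1)$ times a field along $\hat\gamma$ with $g^G_{\hat\gamma'}(\hat\gamma',\cdot)$ of fixed sign, and checks $G$ of the varied velocity pointwise. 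In the Lorentz--Finsler setting this requires the Chern connection with reference vector $\hat\gamma'$ and the direction-dependent $g^G$ of \eqref{eq:rel_g_gF} (harmless here since $\hat\gamma'$ stays on $\C$, away from $\mathrm{Span}(\partial/\partial t)$) --- precisely the work packaged in \cite[Prop.~6.4]{AJ}, which you could cite to close the argument exactly as the paper does. You anticipated this obstacle yourself; with that substitution the proof is complete.
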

\begin{proof}
The first sentence is standard for any achronal boundary,\footnote{In general, one should add ``if $ \partial J^+(S_0)\setminus S_0$ is not empty'', but this holds trivially in our case.} and the  notion of null generators is well known (see, e.g., \cite{G} for the Lorentzian case and \cite{tesis} for its translation to Finsler spacetimes). For the last one, any $\hat\gamma(t)$ belongs to $J^+(S_0)$ and, then, $J^-(\hat\gamma(t))\cap J^+(S_0)$ is compact and $\hat\gamma$ must have an initial point $p_0=\hat\gamma(t_0)$. However, if $t_0>0$ then a causal curve from $S_0$ to $\gamma(t_0)$ would exist. Concatenating it with $\hat\gamma$ one finds a causal curve from $S_0$ to $\hat\gamma(t)$ which is not a lightlike geodesic and, thus, $\hat\gamma(t)\not\in \partial J^+(S_0)$. To check orthogonality, observe that otherwise, for any $t>0$, $\hat\gamma(t)$ would lie in $J^+(p_0')$ for some $p_0'\in S_0$ close to $p_0$ (see \cite[Prop. 6.4]{AJ}).
\end{proof}

As a consequence of this proposition, $\partial J^+(S_0)$ must lie in the image of the wavemap \eqref{def:wave_map}. More precisely, if
$$
c: \nu(S_0)\rightarrow [0,\infty], \qquad (s,u) \mapsto c(s,u)=\hbox{Max} \{t: \hat f(t,s,u)\in \partial J^+(S_0)\}
$$
is the {\em null cut function from} $S_0$, then  
$$\partial J^+(S_0)=\{\hat f(t,s,u): \; t\leq c(s,u),\, (s,u)\in \nu(S_0) \}$$
and, for any $t_1\geq 0$, front$(t_1)$ is obtained just considering the points with $t=t_1$ in $ \partial J^+(S_0) $. 

\begin{rem}
For a complete Riemannian manifold, it is well known that a geodesic emanating from a compact submanifold $S_0$ strictly minimizes the distance before its cut instant $t_c$ (which appears not later than the first focal point) and there will be shorter geodesics from $S_0$ after $t_c$ (see, e.g., \cite[Prop. 2.2]{dC} and \cite[Lem. 2.11]{Sa}). Such properties have a direct translation for the cut points of lightlike geodesics for any globally hyperbolic Lorentz \cite[\S 9]{BEE} or Lorentz-Finsler metric such as our $G$ \cite{tesis}. 
\end{rem}

So, the following essential result follows as a straightforward consequence of Prop. \ref{prop:zermelo} and the definition of the null cut locus. 

\begin{cor}
\label{cor:zermelo0}
The only first-arriving causal curves from $S_0$ are the cone geodesics of $\C$ departing orthogonally from $S_0$ until they arrive at their cut points. Thus, they lie in the image of the wavemap. 
\end{cor}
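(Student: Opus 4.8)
The plan is to reduce the statement to the already-established structure of the achronal boundary $\partial J^+(S_0)$ by proving the key equivalence: a causal curve $\hat\gamma(t)=(t,\gamma(t))$ departing from $S_0$ is first-arriving if and only if its image lies entirely in $\partial J^+(S_0)$. Once this is shown, Prop.~\ref{prop:zermelo} and the Remark on cut points finish the argument almost immediately.

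For the equivalence, first I would introduce the minimal arrival time $t_{\min}(x_0):=\inf\{t:(t,x_0)\in J^+(S_0)\}$. Causal simplicity (guaranteed by the global hyperbolicity assumed in Conv.~\ref{convention}) makes $J^+(S_0)$ closed, so this infimum is attained, and since $I^+(S_0)$ is open while no point with $t<t_{\min}(x_0)$ belongs to $J^+(S_0)$, the point $(t_{\min}(x_0),x_0)$ lies in $J^+(S_0)\setminus I^+(S_0)=\partial J^+(S_0)$. By the definition of first-arriving, $\hat\gamma$ is first-arriving precisely when $t_0=t_{\min}(\gamma(t_0))$ for every $t_0$, which forces each $\hat\gamma(t_0)\in\partial J^+(S_0)$. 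The converse direction is where the extrinsic choice of observers enters: if $\hat\gamma(t_0)\in\partial J^+(S_0)$ but $t_{\min}(\gamma(t_0))<t_0$, then because $T=\partial/\partial t$ is $\C$-timelike the vertical segment $t\mapsto(t,\gamma(t_0))$ is a future-directed timelike curve, so $\hat\gamma(t_0)\in I^+\big((t_{\min}(\gamma(t_0)),\gamma(t_0))\big)\subset I^+(S_0)$, contradicting that $\hat\gamma(t_0)$ lies on the achronal boundary. Hence first-arriving is equivalent to being contained in $\partial J^+(S_0)$.

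Next I would exploit achronality. Any causal curve contained in the achronal set $\partial J^+(S_0)$ cannot contain a timelike subarc (two of its points would then be chronologically related within $\partial J^+(S_0)$), so it is lightlike; by the uniqueness of the foliation in Prop.~\ref{prop:zermelo} it must coincide, up to the $t$-reparametrization fixed in Conv.~\ref{convention}, with a null generator, i.e. a cone geodesic of $\C$, which by the same proposition meets $S_0$ exactly once and does so $G$-orthogonally. This identifies every first-arriving curve with an orthogonal cone geodesic. To locate the endpoint ``until the cut point'', I would invoke the null cut function $c$ and the description $\partial J^+(S_0)=\{\hat f(t,s,u): t\le c(s,u),\ (s,u)\in\nu(S_0)\}$ together with the Remark on cut points: along an orthogonal cone geodesic $t\mapsto\hat f(t,s,u)$ the point stays on $\partial J^+(S_0)$ exactly for $t\le c(s,u)$ and enters $I^+(S_0)$ for $t>c(s,u)$. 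Combining this with the equivalence above, the first-arriving causal curves are precisely the maps $t\mapsto\hat f(t,s,u)$, $t\in[0,c(s,u)]$, i.e. the orthogonal cone geodesics of $\C$ run until their cut points; in particular they all lie in the image of the wavemap.

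The main obstacle I anticipate is the converse direction of the equivalence, namely showing that a curve confined to $\partial J^+(S_0)$ really is first-arriving, since this is exactly where the extrinsic datum $T=\partial/\partial t$ (its $\C$-timelike character, ensuring that vertical worldlines are timelike) is indispensable: the achronal-boundary machinery of Prop.~\ref{prop:zermelo} is intrinsic to $\C$ and would not by itself relate the boundary to arrival times measured by $t$. The remaining steps are then bookkeeping on top of the global causal results already cited.
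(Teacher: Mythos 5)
Your proposal is correct and follows essentially the same route as the paper, which states the corollary as a direct consequence of Prop.~\ref{prop:zermelo} together with the definition of the null cut function and the description $\partial J^+(S_0)=\{\hat f(t,s,u):\, t\leq c(s,u)\}$. The only content the paper leaves implicit is the equivalence ``first-arriving $\Leftrightarrow$ contained in $\partial J^+(S_0)$'', and your $t_{\min}$-argument (closedness of $J^+(S_0)$ from global hyperbolicity for one direction, the vertical $\C$-timelike segment through $T=\partial/\partial t$ for the other) is precisely the standard justification consistent with Conv.~\ref{convention}.
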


However, the behavior of $c$ is subtle even in the Lorentz case. Indeed, for a compact submanifold $S$ of a complete Riemannian manifold $(N,g_R)$, the cut locus is known to be continuous (and even locally Lipschitz where finite \cite{IT}). Such property is transmitted directly for the lightlike geodesics of the Lorentzian metric $g_L=dt^2-g_R$ on $\R\times N$, which is globally hyperbolic (noticeably, see \cite[\S 4]{CFGH}). Nevertheless, in general the null cut function of a point in globally hyperbolic spacetimes is known to be only lower-semicontinuous \cite[Prop. 9.33]{BEE}. As we will be interested only in the property $c \geq \varepsilon$ for some $\varepsilon>0$, a self-contained proof is provided next. It is worth pointing out that this result, applied to $G=dt^2-F^2$ with $F$ a Finsler metric, yields tubular neighborhoods for submanifolds of a Finsler manifold (see \cite{AlJ} for a direct proof).

\begin{lemma}
\label{th:zermelo}
Given $\hat  v \in \nu(S_0) $, there exists $ \varepsilon > 0 $ and a neighborhood $ \hat{W} $ of $ \hat v $ in $ \nu(S_0) $ such that all the cone geodesics $ \hat{\gamma} $ with initial velocity $ \hat{\gamma}'(0) \in \hat{W} $ are strictly first-arriving from $ S_0 $ in the interval $ [0,\varepsilon] $.
\end{lemma}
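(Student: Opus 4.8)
The plan is to work with $G$ on $M$ and to lower the codimension of $S_0$. By the equivalences in \eqref{eq:equiv_G_F}, a point $\hat w=(1,w)\in\nu(S_0)$ over the foot $(0,s)$ is a future-directed $G$-null vector that is $G$-orthogonal to $S_0$, and by Prop.~\ref{prop:zermelo} and Cor.~\ref{cor:zermelo0} the cone geodesic $\hat\gamma_w$ is strictly first-arriving exactly while it remains strictly inside $\partial J^+(S_0)$, i.e.\ strictly before its null cut instant. Hence it is enough to bound this cut instant below by a common $\varepsilon>0$ for all $\hat w$ in a neighborhood $\hat W$ of $\hat v$. The difficulty is that the null orthogonal set $\nu(S_0)$ is a conic bundle with $(r-1)$-dimensional fibers and a vertex at the origin, so the normal exponential map of $S_0$ is not smooth there and the usual focal/cut estimates do not apply directly.

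To bypass this, for each $\hat w\in\nu(S_0)$ near $\hat v$ I would interpose a hypersurface $S^w\subset N$ with $S\subset S^w$ near $s=s(w)$ and $w\perp_F T_sS^w$ (equivalently, $\hat w$ is $G$-null orthogonal to $S_0^w:=\{0\}\times S^w$ at $(0,s)$). Since $w\perp_F S$ forces $T_sS\subset H_w:=\ker g^F_w(w,\cdot)$ and $\dim H_w=n-1=\dim S^w$, one must take $T_sS^w=H_w$; writing $H_w=T_sS\oplus C_w$ with $\dim C_w=r-1$ chosen smoothly in $w$ and sweeping $S$ near $s$ by short $F$-geodesics issuing in the directions of $C_w$ produces such a smooth family. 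The gain is that $S_0^w$ has codimension two in $M$, so its $G$-null orthogonal set splits into two rank-one (linear) vector bundles — the two null normals on either side — and the null normal exponential map $\exp^{\perp}_{S_0^w}$ is smooth, with nondegenerate differential along $\hat w$ at $t=0$.

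For this smooth normal exponential the classical focal-point theory of null geodesics orthogonal to a spacelike submanifold in a globally hyperbolic Lorentz--Finsler spacetime (cf.\ \cite[Ch.~9]{BEE}, \cite{tesis}, and \cite{AlJ}) gives that $\hat\gamma_w$ is strictly first-arriving from $S_0^w$ up to its first focal/cut instant, which is positive; since the family $S^w$ and the maps $\exp^{\perp}_{S_0^w}$ vary smoothly, $\exp^{\perp}_{S_0^w}$ is a diffeomorphism on a uniform tube and continuity provides a single $\varepsilon>0$ and neighborhood $\hat W$ with strict first-arrival from $S_0^w$ on $[0,\varepsilon]$ for all $\hat w\in\hat W$. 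I would then transfer this to $S_0$: as $S\subset S^w$ near $s$, every competing causal curve issuing from a nearby point of $S_0$ is also a competitor from $S_0^w$, while the causal speed bound $F(\dot\gamma)\le1$ (immediate from $G(1,\dot\gamma)\ge0$) keeps points of $S_0$ outside a fixed neighborhood of $s$ from reaching $\gamma_w([0,\varepsilon])$ within time $\varepsilon$, after shrinking $\varepsilon$ if necessary; hence $\hat\gamma_w$ is strictly first-arriving from $S_0$ on $[0,\varepsilon]$. The main obstacle is producing the family $\{S^w\}$ with all three properties at once (containing $S$, orthogonal to $\hat w$ at the foot, smooth in $w$) and checking that the focal/cut time of $S_0^w$ stays bounded below as $w\to v$; once the codimension is two the exponential map is smooth and the remaining steps are the standard uniform tubular-neighborhood estimate together with the elementary speed bound above.
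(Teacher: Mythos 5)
Your overall architecture is exactly the paper's: the decisive idea---interposing, for each $w$ near $v$, a hypersurface $S^w\subset N$ containing $S$ locally, with $w$ still $F$-orthogonal to it at the foot point, so that the null orthogonal data of $S_0^w=\{0\}\times S^w$ become two smooth one-dimensional bundles and the conic singularity of $\nu(S_0)$ disappears---is precisely the construction of Lem.~\ref{th:zermelo}, and your localization via the speed bound $F(\dot\gamma)\le 1$ and the final transfer (every competitor from $S\subset S^w$ is a competitor from $S^w$) mirror the paper's steps, which implement the localization with a flat Lorentz--Minkowski metric of wider cones. Your construction of $S^w$ (sweeping $S$ by short geodesics in the directions of a complement $C_w\subset H_w$) is only cosmetically different from the paper's, which uses an adapted chart so that $\varphi(S^w)=\varphi(S\cap U)+P^w$ is literally a hyperplane.

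The endgame as you wrote it, however, has a genuine gap. The map you call $\exp^{\perp}_{S_0^w}$ has an $n$-dimensional domain ($\dim S_0^w=n-1$ plus the parameter $t$) and maps into the $(n+1)$-dimensional $M$, so it cannot be ``a diffeomorphism on a uniform tube''; it is at best an immersion onto a null hypersurface, and, more seriously, injectivity of the \emph{spacetime} normal exponential does not yield the contradiction you need. If $\hat\gamma_w$ failed to be first-arriving, the competing orthogonal null geodesic produced by the boundary argument reaches $(\mu_0,\gamma_w(\varepsilon))$ while $\hat\gamma_w$ reaches $(\varepsilon,\gamma_w(\varepsilon))$: two \emph{distinct} spacetime points on the same integral curve of $\partial/\partial t$, about which spacetime injectivity says nothing. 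The paper's fix is to project: it defines $\eta(t,w,y,\ell)=\bigl(w,\pi_N(\gamma^G_{\mathrm{nor}(w,y,\ell)}(t))\bigr)$, whose fixed-$w$ domain has dimension $1+(n-r)+(r-1)=n=\dim N$, so that injectivity of $\eta$ directly forbids two orthogonal null geodesics from meeting the same vertical observer line at \emph{any} pair of times---which is the statement actually needed (and incorporating $w$ into the domain of the single diffeomorphism $\eta$ is what delivers the uniformity in $w$ that you attribute to ``continuity''). Two further ingredients are elided in your appeal to ``classical focal/cut theory'': (i) producing the competing orthogonal geodesic requires the infimum argument $\mu_0=\inf\{\mu>0:(\mu,\gamma_w(\varepsilon))\in J^+(\{0\}\times(S^w\cap U_2))\}$ inside a convex neighborhood together with \cite[Thm.~6.9]{AJ}, the Lorentz--Finsler version of ``points on the causal boundary are reached by $G$-orthogonal lightlike geodesics''; and (ii) the competitor could a priori come from the opposite normal family $\mathrm{nor}^-$, which the paper excludes by showing that the geodesics of the two families remain in the two different connected components of the complement of $S^w$. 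Note finally that one cannot shortcut these points by continuity of the cut time: as the paper stresses, the null cut function is in general only lower semicontinuous, which is exactly why a self-contained uniform argument is given in place of the citation you propose.
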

\begin{proof}
Given $ \hat v = (1,v) \in \nu(S_0) $, consider a chart $ (U,\varphi) $ around $ \pi^N(v) \in N $ adapted to $ S $. Without loss of generality, we can assume that $ \varphi(U)=\R^n $, so that $ \varphi(S\cap U) $ is a linear subspace of $ \R^n $. Consider now an open neighborhood $ \hat W \subset \nu(S_0)$ of $ \hat v $, with $W \coloneqq \pi^{\Omega}(\hat W)\subset TN$ included in the coordinate neighborhood of $ TN $ induced naturally from $ (U,\varphi) $. For each $ \hat{w} = (1,w) \in \hat{W} $, $ s = \pi^N(w) $, $ w $ is $ F_{(0,s)} $-orthogonal to $ S $ with $ F(w) = 1 $ (recall \eqref{eq:nu}), and we can choose a basis $\lbrace e_1(w),\ldots,e_{r-1}(w),w \rbrace $ of the $ g^{F_{(0,s)}}_w $-orthogonal space to $ S $ ($ g^{F_{(0,s)}} $ is the fundamental tensor of the Minkowski norm $ F_{(0,s)}: T_sN \rightarrow \R$). Working in our coordinates on $TN$ (adapted to $ S $) and applying Gram-Schmidt, this basis can be chosen $ g^F_w $-orthogonal and with a smooth dependence on $ w $.

Although the searched property of being first-arriving is global on $S_0$, we can work locally. Indeed, recall that, for any precompact $\tilde N\subset N$, $ [0,1] \times \tilde N$ admits a flat Lorentz-Minkowski metric with wider cones than $ \mathcal{C}$ (this is consistent with Conv. \ref{convention}). Now consider
\begin{equation}
\label{eq:U1U2}
U_1, U_2 \text{ neighborhoods of } v\coloneqq\pi^{\Omega}(\hat v), \text{ with } \overline{U_1} \subset U_2, \overline{U_2} \subset \tilde{N} \subset U.
\end{equation}
The Lorentz-Minkowski causal future of $\{0\}\times \partial U_2$ does not intersect $ [0,\varepsilon] \times U_1 $ for some small enough $\varepsilon$ and, thus, neither does the $ \mathcal{C} $-causal future of $ \lbrace 0 \rbrace \times (N \setminus U_2)$. This means that we only need to prove the searched property on a suitable $ \{0\} \times (S \cap U_2) $.

Moreover, as $ S \subset N $ is, in general, a submanifold of arbitrary codimension $ r $, we will reduce the proof to the case of $ r = 1 $ by constructing, for each $ w \in W $, a hypersurface $ S^w $ that contains $ S \cap U $ and such that $ w $ is still $ F $-orthogonal to $ S^w $.\footnote{Apart from other reasons pointed out above, we proceed this way because the space of $ F $-orthogonal vectors to $ S $ is not a vector bundle in general, but a submanifold with a conical singularity in the zero section, except when $ S $ is a hypersurface, in which case we have two one-dimensional vector bundles, one in each face.} This way, it will suffice to prove that every cone geodesic (at least in a small enough interval $ [0,\varepsilon] $ independent of $ w $) with initial velocity in a sufficiently small $ W' \subset W $ is strictly first-arriving from $ \{0\} \times (S^w \cap U_2) $, for all $ w \in W' $.

For each $ w \in W $, regard $ P^w \coloneqq \text{Span}(\{e_1(w),\ldots,e_{r-1}(w)\}) $ as a linear subspace of $ \R^n $ (using the coordinates in $ T_sN $) and let
$$
S^w \coloneqq \cup_{y \in (S\cap U)}\varphi^{-1}(\varphi(y)+P^w), 
$$
obtained by adding the coordinates of each $y\in S\cap U$ and those in $P^w$. Observe that $ \varphi(S^w) $ is a hyperplane of $ \R^n $, so that $ S^w $ trivially becomes a hypersurface of $ N $ that contains $ S \cap U $ in such a way that $ w $ is $ F_{(0,\pi^N(w))} $-orthogonal to $ S^w $, as required. Note also that due to the orientability of $ S^w $, the set of $ F $-orthogonal vectors to $ S^w $ has two connected components.

We now proceed to obtain a map $ \eta $ that will play the role of a ``smooth exponential map'' (the true exponential map fails to be smooth at 0). To this end, first define the smooth map $ \hbox{nor}: W \times S \cap U \times \R^n \rightarrow TN $, where $\hbox{nor}(w,y,\ell) $ is the unique $ F $-unit vector $ F $-orthogonal to $ S^w $ at the point $ \varphi^{-1}(\varphi(y)+\ell_1e_1(w)+\ldots+\ell_{r-1}e_{r-1}(w)) \in S^w $ and in the same connected component as $ w $ (so that $ \hbox{nor}(w,\pi^N(w),0) = w $). A dual mapping $\hbox{nor}^-$ obtained by choosing the normal vector in the other connected component will be used too.
 
Now, define $ \eta: A \rightarrow W \times N $, where $ A $ is an open subset of $ \R \times W \times S \cap U \times \R^n $, as
$$
\eta(t,w,y,\ell) \coloneqq (w,\pi_N(\gamma^G_{\hbox{\tiny{nor}}(w,y,\ell)}(t))),
$$
being $ \gamma^G_{\hbox{\tiny{nor}}(w, y,\ell)} $ the t-parametrized geodesic in $ (M,G) $ with initial velocity $ (1,\hbox{nor}(w, y,\ell))\in \hat W $ at $ t = 0 $. Observe that for each point with $ t = 0 $, $ \df \eta $ is an isomorphism. Therefore, there exists a restriction of $ \eta $ in a neighborhood of $ (0,v,\pi^N(v),0) $ where it is a diffeomorphism and its image is of the form $W\times \tilde N$, with $ \tilde N $ precompact and $ \varphi(\tilde N) $ convex (as a subset of $ \R^n $).

Let $W^-\times \tilde N^-$ be the analogous image one would obtain for the mapping $ \eta^- $ (constructed using $ \text{nor}^- $). Choose $ U_1,U_2 $ as in \eqref{eq:U1U2} with the additional condition $ \overline{U_2} \subset \tilde N^- $ and the resulting $ \varepsilon > 0 $, and take a neighborhood $ W' \subset W \cap W^- $ of $ v $ small enough to ensure that $\gamma^G_{\hbox{\tiny{nor}}(w,\pi^N(w),0)}(t) $ and $ \gamma^G_{\hbox{\tiny{nor}}^-(w,\pi^N(w),0)}(t) $ are included in $ [0,\varepsilon]\times U_1 $ for all $ t \in [0,\varepsilon], w\in W' $ (reducing $ \varepsilon $ if necessary). Observe that this choice of $ \varepsilon $ guarantees that the normal geodesics to $ S^w $ starting on different sides do not intersect. Indeed, note that $ S^w $ divides $ \tilde N \cup \tilde N^- $ into two connected components and the projection $ \pi_N(\gamma^G_{\hbox{\tiny{nor}}(w,\pi^N(w),0)}(t)) $, with $ t\in [0,\varepsilon], w \in W' $, must remain entirely in one of them. Otherwise, it would have to cross $ S^w $ in order to pass to the other component (as it cannot escape $ \tilde N \cup \tilde N^- $ in the chosen interval), but this yields a contradiction with the fact that $ \eta $ is a diffeomorphism. The same happens with the normal geodesics associated with $ \eta^- $, which must remain on the opposite connected component.

Observe that the construction of $ U_1 $ and $ U_2 $ ensures that no causal curve departing from $ N \setminus U_2 $ enters $ [0,\varepsilon] \times U_1 $, so we only need to prove that each $t$-parametrized cone geodesic $ \hat{\gamma}_w(t)=(t,\gamma(t)) $, $\hat{\gamma}'(0) = (1,w) $,  $t\in [0,\varepsilon]$ (note that $ \hat \gamma_w(t)=\gamma^G_{\hbox{\tiny{nor}}(w,\pi^N(w),0)}(t) \in [0,\varepsilon] \times U_1 $), is first-arriving from $ \lbrace 0 \rbrace \times (S^w \cap U_2) $, for all $w \in W'$.
Otherwise, there exists $ \mu < \varepsilon $ such that $ (\mu,\gamma_w(\varepsilon))\in J^+(\lbrace 0 \rbrace \times (S^w\cap U_2))$. Put
$$
\mu_0\coloneqq\inf\{\mu>0:(\mu,\gamma_w(\varepsilon)) \in J^+(\lbrace 0 \rbrace \times (S^w \cap U_2)) \} >0
$$
(the inequality holds for fixed $w$). Without loss of generality, we can assume that the closure of $ [0,\varepsilon] \times (\tilde{N} \cap \tilde{N}^-) $ is contained in a convex neighborhood of $ (M,G) $,\footnote{Such a neighborhood is a normal neighborhood of all its points, so that the exponential map at each point will be a diffeomorphism up to the origin (because of its Finslerian character). In the case of $(M,G)$, the non-smooth direction $\partial/\partial t$ would remain non-smooth for the exponential too. However, this will not be relevant for our case because, as noted in Rem. \ref{rem:lf_metric}, $G$ can be smoothen along $G$ preserving the metric around the cone structure $\C$; moreover, only properties of cone geodesics (as those in \cite{AJ}) will be claimed. The existence of convex neighborhoods was proved by Whitehead (first for linear connections \cite{W1} and then extended to sprays \cite{W2}).} so $ (\mu_0,\gamma_w(\varepsilon)) \in J^+(\lbrace 0 \rbrace \times (S^w\cap U_2))$. As $ (\mu_0,\gamma_w(\varepsilon)) \not\in I^+(\lbrace 0 \rbrace \times (S^w\cap U_2))$ by the definition of $\mu_0$, this point is reached by a lightlike geodesic from $ \lbrace 0 \rbrace \times (S^w\cap U_2) $ which is $G$-orthogonal to $ \lbrace 0 \rbrace \times S^w $ (see \cite[Thm. 6.9]{AJ}), so that it turns out that $ \eta $ is not injective in $ W \times \tilde N $, which is a contradiction. Finally, $\hat\gamma_w$ is also strictly first-arriving because, otherwise, another causal curve $ \hat\alpha $ from $\{0\}\times S^w\cap U_2$ would arrive at $\hat\gamma_w(\varepsilon)$, which is on the boundary of the causal future $J^+(\lbrace 0 \rbrace \times (S^w\cap U_2))$ (as $ \hat\gamma $ is first-arriving on $[0,\varepsilon]$). Therefore, $ \hat\alpha $ is necessarily an orthogonal lightlike geodesic, in contradiction with $\eta$ being injective, which concludes.
\end{proof}

So, the following result becomes trivial from the compactness of $\nu(S_0)$.

\begin{thm}
\label{cor:zermelo}
The null cut function from $S_0$ satisfies $c>\varepsilon$ for some $\varepsilon>0$, that is, for small time, every cone geodesic with initial velocity in $ \nu(S_0) $ is strictly first-arriving from $S_0$.
\end{thm}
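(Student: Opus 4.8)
The plan is to upgrade the purely local statement of Lem.~\ref{th:zermelo} to a uniform one by a routine compactness argument, so the first thing I would record is that $\nu(S_0)$ is compact. This is immediate from its fiber bundle structure established above: $\nu(S_0)$ fibers over $S_0=\{0\}\times S$, which is compact by hypothesis, with fiber the standard sphere $\mathds{S}^{r-1}$, so the total space is compact.

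I would then apply Lem.~\ref{th:zermelo} at every point of $\nu(S_0)$. For each $\hat v\in\nu(S_0)$ it provides an open neighborhood $\hat W_{\hat v}\subset\nu(S_0)$ and a number $\varepsilon_{\hat v}>0$ such that every cone geodesic whose initial velocity lies in $\hat W_{\hat v}$ is strictly first-arriving from $S_0$ on $[0,\varepsilon_{\hat v}]$. The family $\{\hat W_{\hat v}\}_{\hat v\in\nu(S_0)}$ is then an open cover of the compact space $\nu(S_0)$, from which I would extract a finite subcover $\hat W_{\hat v_1},\dots,\hat W_{\hat v_k}$ and set $\varepsilon_0:=\min\{\varepsilon_{\hat v_1},\dots,\varepsilon_{\hat v_k}\}>0$.

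To finish, I would take any $\hat u\in\nu(S_0)$; it lies in some $\hat W_{\hat v_i}$, so the cone geodesic with initial velocity $\hat u$ is strictly first-arriving on $[0,\varepsilon_{\hat v_i}]$. Since being strictly first-arriving is inherited by every subinterval, it holds on $[0,\varepsilon_0]$, and by Cor.~\ref{cor:zermelo0} together with the definition of the null cut function this forces $c(\hat u)\geq\varepsilon_0$. As $\hat u$ was arbitrary, $c\geq\varepsilon_0$ on all of $\nu(S_0)$, and putting $\varepsilon:=\varepsilon_0/2$ yields the strict bound $c>\varepsilon$.

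I do not expect any genuine obstacle at this stage: every real difficulty --- the non-linear geometry of the orthogonal bundle, the reduction to the auxiliary hypersurfaces $S^w$, and the smooth exponential-type map $\eta$ --- has already been overcome inside Lem.~\ref{th:zermelo}. The only points that would need a word of care are the compactness of $\nu(S_0)$, which its bundle structure supplies, and the harmless passage from the pointwise $\varepsilon_{\hat v}$ to a single uniform $\varepsilon$, together with the trivial adjustment producing the strict inequality $c>\varepsilon$ rather than $c\geq\varepsilon$.
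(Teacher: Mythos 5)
Your proposal is correct and takes essentially the same route as the paper: the paper states that Thm.~\ref{cor:zermelo} ``becomes trivial from the compactness of $\nu(S_0)$'' given Lem.~\ref{th:zermelo}, which is exactly your finite-subcover argument with $\varepsilon$ taken as the minimum over the subcover. The one detail you make explicit beyond the paper --- passing from strictly first-arriving on $[0,\varepsilon_0]$ to $c\geq\varepsilon_0$ and then to the strict bound $c>\varepsilon_0/2$ --- is harmless and correct.
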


\begin{rem}\label{r_resumen_s4.2}
Summing up, the curves that minimize the propagation time from $ S_0 $ are the cone geodesics $ G $-orthogonal to $ S_0 $, which are also the only causal curves contained in $ \partial J^+(S_0) $. They remain time-minimizing at least in a short common lapse. However, each geodesic $\hat \gamma(t)$ will leave $ \partial J^+(S_0) $ if it has a cut point and, immediately after this point, global hyperbolicity implies that a second cone geodesic from $S_0$ will reach it first. The projection $\gamma$ is the spatial trajectory of the wave (i.e., the wave propagates faster along $\gamma$), but different first-arriving trajectories will meet beyond the cut point.
\end{rem}

\subsection{Cone geodesics and spatial trajectories of the wave}
\label{subsec:cone_geod} Next, the wavemap $\hat f$ will be determined by obtaining first the geodesic equations of $G$ and, then, the equation of the reparametrization required for $\hat f$. As we will work in coordinates, we will use $\nu(S_0)=S\times \mathds{S}^{r-1}$ (Conv. \ref{convention2}).

Put $ \hat{f}(t,s,u) = (t,f(t,s,u)) = (t,x^1(t,s,u),\ldots,x^n(t,s,u)) $ and consider the notation $ \partial_t\hat{f}(t,s,u) \coloneqq \frac{\partial \hat{f}(t,s,u)}{\partial t} = \df \hat{f}_{(t,s,u)}\left(\frac{\partial}{\partial t}\right)$, that is,\footnote{In Finslerian notation, tangent vectors are usually written using the coordinates of $ TM $, i.e., one writes the coordinates of the point $ p \in M $ and the coordinates of the vector $ v \in T_pM $. Here we will omit the point coordinates, as there is no ambiguity regarding its identification given the vector.}
\begin{equation}\label{not:vel_t}
\partial_t\hat{f}(t,s,u) = (1,\partial_tf(t,s,u)) = (1,\partial_tx^1(t,s,u),\ldots,\partial_tx^n(t,s,u)).
\end{equation}
Also, $ \hat{f}_{t_0}(s,u) = (t_0,f_{t_0}(s,u)) = (t_0,x_{t_0}^1(s,u),\ldots,x_{t_0}^n(s,u)) $ will denote the wavemap at time $ t_0 \geq 0 $. 
So, fixing $ s_0 \in S $ and $ u_0 \in \mathds{S}^{r-1} $ (recall Conv. \ref{convention2}), the curve $ t \mapsto \hat{f}(t,s_0,u_0) $ provides the spacetime trajectory of the wave from $ (0,s_0)\in S_0 $ in the direction $\hat u \equiv (s_0,u_0) \in \nu(S_0)$, and $ t \mapsto f(t,s_0,u_0) $ provides the spatial trajectory; fixing $ t_0 \geq 0 $, $ \hat{f}_{t_0} $ generates the wavefront at the instant $ t = t_0 $.

In general, $G$-geodesics are not parametrized by $t$. So, let us introduce an arbitrary (smooth) reparametrization in time $\tilde{t}(\rho,s,u)$ with $\partial_{\rho} \tilde{t}(\rho,s,u)>0$ ($ \rho $ will be the parameter of the geodesic). Namely, let 
$ \tilde{f} \coloneqq \hat{f} \circ \psi = (\tilde{t},\tilde{x}^1,\ldots,\tilde{x}^n) $, where
$$
\begin{array}{cccccc}
  \tilde f \colon & [a,\infty) \times S \times \mathds{S}^{r-1} & 
  \overset{\psi} \longrightarrow 
  & [0,\infty) \times S \times \mathds{S}^{r-1} & 
\overset{\hat f} \longrightarrow  
  & M\\
 & (\rho,s,u) & \longmapsto & (\tilde t(\rho,s,u),s,u) & \longmapsto & \hat{f}(\tilde t(\rho,s,u),s,u),
\end{array}
$$
so that
$ \tilde{f}(\rho,s,u) = (\tilde{t}(\rho,s,u),\tilde{x}^1(\rho,s,u),\ldots,\tilde{x}^n(\rho,s,u))$ with $\tilde{x}^i(\rho,s,u)=x^i(\tilde t(\rho,s,u),s,u)$.

\begin{notation}
\label{not:einstein}
In order to simplify summations and avoid clutter, we will use the notation $ x^0 \coloneqq t $ and analogously, $ \tilde{x}^0(\rho,s,u) \coloneqq \tilde{t}(\rho,s,u)$, when convenient. Consistently, $(x^0,x^1,\ldots,x^n) $ will also denote the natural coordinate functions on $ M=\R\times N$, so that $ g_{ij}(\hat v) \coloneqq g^G_{\hat v}(\frac{\partial}{\partial x^i},\frac{\partial}{\partial x^j}) $ for any $ \hat v \in TM \setminus \textup{Span}(\frac{\partial}{\partial x^0}) $ and $ g^{ij}(\hat v) $ will be the coefficients of the inverse matrix of $ \{g_{ij}(\hat v)\}$. Moreover, Einstein's summation convention will be used, i.e., we will omit the sums from $ 0 $ to $ n $ when an index appears up and down, and we will raise and lower indices using $ g_{ij} $ and $ g^{ij} $.
\end{notation}

The \textit{Christoffel symbols} $ \Gamma_{\ ij}^k(\hat v) $ of $ (M,G) $ in the direction $ \hat v \in TM \setminus \textup{Span}(\frac{\partial}{\partial x^0}) $ are given by 
$$
\nabla^{\hat v}_{\frac{\partial}{\partial x^i}}\left(\frac{\partial}{\partial x^j}\right) = \Gamma_{\ ij}^k(\hat v)\frac{\partial}{\partial x^k}, \quad i,j = 0,\ldots,n,
$$
where $ \nabla $ is the Chern connection. Also, the \textit{formal Christoffel symbols} $ \gamma_{\ ij}^k(\hat v) $ (see \cite[\S 2.3]{BCS}) are defined as
\begin{equation}
\label{eq:christ_formal}
\gamma_{\ ij}^k(\hat v) \coloneqq \frac{1}{2}g^{kr}(\hat v)\left(\frac{\partial g_{rj}}{\partial x^i}(\hat v)+\frac{\partial g_{ri}}{\partial x^j}(\hat v)-\frac{\partial g_{ij}}{\partial x^r}(\hat v)\right).
\end{equation}
Note that the dependence with $ \hat v $ appears only in the (pointwise) direction, i.e.,
\begin{equation}
\label{e_direction} 
\Gamma_{\ ij}^k(\lambda \hat v) = \Gamma_{\ ij}^k(\hat v), \qquad  \gamma_{\ ij}^k(\lambda \hat v) = \gamma_{\ ij}^k(\hat v)
\end{equation}
for any  function $ \lambda>0$. This property  is characteristic of Finsler geometry, as $\nabla^{\hat v} $ and $ g^G_{\hat v} $ become positively zero-homogeneous. 
 
Fixing $ s \in S $ and $ u \in \mathds{S}^{r-1} $, the geodesic equation for the curve $ \tilde{f}(\rho,s,u) = (\tilde{x}^0(\rho,s,u),\ldots,\tilde{x}^n(\rho,s,u)) $ is
$$
\partial_{\rho}^2\tilde{x}^k = -\Gamma_{\ ij}^k(\partial_{\rho}\tilde{f})\partial_{\rho}\tilde{x}^i\partial_{\rho}\tilde{x}^j, \quad k = 0,\ldots,n,
$$
but only the formal Christoffel symbols contribute to the double contraction on the right (see \cite[\S 5.3]{BCS}) and it becomes
\begin{equation}
\label{eq:geod_tau_gen}
\partial_{\rho}^2\tilde{x}^k = -\gamma_{\ ij}^k(\partial_{\rho}\tilde{f}) \; \partial_{\rho}\tilde{x}^i\partial_{\rho}\tilde{x}^j, \quad k =  0 \ldots,n,
\end{equation}
where
\begin{equation}\label{e_18}
\begin{split}
\gamma_{\ ij}^k(\partial_{\rho}\tilde{f}) = & \gamma_{\ ij}^k(\tilde{x}^0,\ldots,\tilde{x}^n,\partial_{\rho}\tilde{x}^0,\ldots,\partial_{\rho}\tilde{x}^n) \\
= & \gamma_{\ ij}^k \left( \tilde{x}^0,\ldots,\tilde{x}^n,1,\frac{\partial_{\rho}\tilde{x}^1}{\partial_{\rho}\tilde{t}},\ldots,\frac{\partial_{\rho}\tilde{x}^n}{\partial_{\rho}\tilde{t}} \right),
\end{split}
\end{equation}
the latter using \eqref{e_direction} (recall $\partial_{\rho}\tilde{x}^0 \equiv \partial_{\rho} \tilde t$).

\begin{thm}
\label{th:geod_gen}
For each $ (s,u) \in S\times \mathds{S}^{r-1} (\cong \nu(S_0))$, the wavemap $ \hat{f}(t,s,u) = (t,f(t,s,u)) = (x^0,x^1(t,s,u)\ldots,x^n(t,s,u)) $ is given by the following ODE system:
\begin{equation}
\label{eq:geod_t_gen}
\partial_t^2x^k = -\gamma_{\ ij}^k(\partial_t\hat{f})\partial_tx^i\partial_tx^j + \gamma_{\ ij}^0(\partial_t\hat{f})\partial_tx^i\partial_tx^j\partial_tx^k, \quad k=1,\ldots,n.
\end{equation}
Therefore, the spatial trajectories of the wave are the solutions $ f(t,s,u) = (x^1(t,s,u),\ldots,x^n(t,s,u)) $ whose initial conditions satisfy
\begin{itemize}
\item $ f(0,s,u) = (x_0^1(s,u),\ldots,x_0^n(s,u)) = s $,
\item $ \partial_tf(0,s,u) = (\partial_tx^1(0,s,u),\ldots,\partial_tx^n(0,s,u)) = u \ (\in \Sigma_{(0,s)}\cap S_0^{\perp_F}\cong \mathds{S}^{r-1}).$  
\end{itemize}
\end{thm}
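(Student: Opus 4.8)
The plan is to obtain \eqref{eq:geod_t_gen} from the affinely parametrized $G$-geodesic equation \eqref{eq:geod_tau_gen} by changing the parameter $\rho$ into the temporal coordinate $t$ and then eliminating the reparametrization by means of the $k=0$ component.

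First I would justify the existence of the reparametrization. For fixed $(s,u)$, the curve $t\mapsto \hat f(t,s,u)$ is, by definition of the wavemap, the cone geodesic with initial velocity $\hat u=(1,u)\in\nu(S_0)$; by the characterization of cone geodesics as lightlike pregeodesics of any Lorentz-Finsler metric with cone structure $\C$ (end of \S\ref{s2}, applied to $G$), it is a lightlike $G$-pregeodesic. Hence there is a smooth $\rho\mapsto\tilde t(\rho,s,u)$ for which $\tilde f(\rho,s,u)=\hat f(\tilde t(\rho,s,u),s,u)$ is an affine $G$-geodesic solving \eqref{eq:geod_tau_gen}; since $t$ is a temporal function and $\hat\gamma$ is future-directed, $\partial_\rho\tilde t>0$, so dividing by it later is legitimate.

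Next I would run the chain rule. With the convention $x^0=t$ of Notation \ref{not:einstein} (so $\partial_t x^0=1$ and $\partial_t^2 x^0=0$), one has for every index $i=0,\dots,n$
\[
\partial_\rho\tilde x^i=\partial_t x^i\,\partial_\rho\tilde t,\qquad
\partial_\rho^2\tilde x^i=\partial_t^2 x^i\,(\partial_\rho\tilde t)^2+\partial_t x^i\,\partial_\rho^2\tilde t,
\]
the $i=0$ cases being automatically consistent. Invoking the zero-homogeneity \eqref{e_direction} of the formal Christoffel symbols exactly as in \eqref{e_18}, I would replace $\gamma_{\ ij}^k(\partial_\rho\tilde f)$ by $\gamma_{\ ij}^k(\partial_t\hat f)$, since factoring out $\partial_\rho\tilde t>0$ turns the lightlike direction $\partial_\rho\tilde f$ into $\partial_t\hat f=(1,\partial_t f)$. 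Substituting everything into \eqref{eq:geod_tau_gen} and using $\partial_\rho\tilde x^i\partial_\rho\tilde x^j=\partial_t x^i\partial_t x^j(\partial_\rho\tilde t)^2$ yields, for all $k=0,\dots,n$,
\[
\partial_t^2 x^k\,(\partial_\rho\tilde t)^2+\partial_t x^k\,\partial_\rho^2\tilde t
=-\gamma_{\ ij}^k(\partial_t\hat f)\,\partial_t x^i\partial_t x^j\,(\partial_\rho\tilde t)^2 .
\]

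The heart of the argument --- and the only step that is more than bookkeeping --- is to remove the unknown second derivative $\partial_\rho^2\tilde t$ without ever solving for $\tilde t$. Here I would read off the $k=0$ component, which, because $\partial_t^2 x^0=0$, gives
\[
\partial_\rho^2\tilde t=-\gamma_{\ ij}^0(\partial_t\hat f)\,\partial_t x^i\partial_t x^j\,(\partial_\rho\tilde t)^2 ,
\]
and substitute this into the $k=1,\dots,n$ components. Dividing by $(\partial_\rho\tilde t)^2>0$, the reparametrization cancels completely and one is left precisely with \eqref{eq:geod_t_gen}. The initial conditions are then immediate from the definition of the wavemap: $\hat f(0,s,u)=(0,s)$ forces $f(0,s,u)=s$, and the prescribed initial velocity $\hat u=(1,u)\in\nu(S_0)$ forces $\partial_t f(0,s,u)=u\in\Sigma_{(0,s)}\cap S_0^{\perp_F}$, consistently with Conv. \ref{convention2}. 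I expect no genuine obstacle beyond keeping track of the index $i=0$ contributions and the homogeneity normalization; the causal/Finsler input (pregeodesic character, $\partial_\rho\tilde t>0$) is all supplied by the preceding results.
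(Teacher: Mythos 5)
Your proposal is correct and follows essentially the same route as the paper: invoke the pregeodesic character of the wavemap curves, apply the chain rule to the affinely parametrized geodesic equation \eqref{eq:geod_tau_gen}, use the zero-homogeneity \eqref{e_direction}/\eqref{e_18} to evaluate the formal Christoffel symbols on $\partial_t\hat f$, and eliminate $\partial_\rho^2\tilde t$ via the $k=0$ component --- which is precisely what the paper means by ``substituting \eqref{eq:geod_tau_gen} (for the chosen $k$ and $k=0$)''. Your added remarks ($\partial_\rho\tilde t>0$ from $t$ being temporal, and the vanishing right-hand side for $k=0$ being consistent) only make explicit points the paper sets up before or notes after the proof.
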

\begin{proof}
By the definition of the wavemap, the curve $ t \mapsto \hat{f}(t,s,u) $ must be a lightlike pregeodesic parametrized by the time $ t $ and with the initial conditions enunciated above. The reparametrization that makes it geodesic is precisely $ \tilde{f} = \hat{f} \circ \psi $, with $ \tilde{f}(\rho,s,u) = (\tilde{x}^0(\rho,s,u),\ldots,\tilde{x}^n(\rho,s,u)) $ satisfying \eqref{eq:geod_tau_gen}. Therefore, we need to rewrite this geodesic equation in terms of the parameter $ t $. As
$ \partial_{\rho}\tilde{x}^i =  \partial_{\rho}\tilde{t} \, \partial_tx^i $,
\begin{equation}
\label{eq:chain_rule}
\partial_t^2x^k = \frac{1}{\partial_{\rho}\tilde{t}} \partial_{\rho} \left(\frac{\partial_{\rho}\tilde{x}^k}{\partial_{\rho}\tilde{t}}\right) 
= \frac{1}{(\partial_{\rho}\tilde{t})^2} \left(\partial_{\rho}^2\tilde{x}^k- \partial_{\rho}^2\tilde{t} \, \frac{\partial_{\rho}\tilde{x}^k}{\partial_{\rho}\tilde{t}}\right), 
\quad k = 1,\ldots,n,
\end{equation}
and \eqref{eq:geod_t_gen} follows substituting \eqref{eq:geod_tau_gen} (for the chosen $k$ and $k=0$) in \eqref{eq:chain_rule} taking into account \eqref{e_18}.
\end{proof}

Note that the right-hand side of \eqref{eq:geod_t_gen} vanishes for $k=0$, consistently with the $t$-reparametrization of the trajectories.

\subsection{Trajectories for wildfires and PDE's}
\label{subsec:orth_cond}
We turn our attention to the particular case of dimension $ n = 2 $ and $ r = 1 $, with $ S_0 $ being the boundary of a compact hypersurface $ B_0 \subset \lbrace t=0 \rbrace $ of $ M $ with boundary, which is the situation when modeling wildfires, \S \ref{subsec:wildfires}. In this case, at each $ p \in S_0 $, $ \nu(S_0)_p $ is homeomorphic to $ \mathds{S}^0 $ (it contains  two points) and we will be interested only in the one representing the lightlike direction whose projection on $TN$ points outwards from $ B_0 $. This way, the dependence on $ u \in \mathds{S}^0 $ is dropped and the wavemap becomes a function $ \hat{f}: [0,\infty) \times S \rightarrow M $. Observe that, now, the image of $ \hat{f} $ includes $ \partial J^+(B_0) \setminus \textup{Int}(B_0) $ (and it is equal to this set for small $t$). 

\begin{conv}\label{conventionfire}
$ S_0 $ is assumed to be connected (otherwise, each connected component would be taken into account separately) and, thus, diffeomorphic to $\mathds{S}^1 $. Implicitly, the wavemap provides a parametrization of $S_0$, $[a,b]\ni s\mapsto \hat f(0,s) \in S_0$, with $\hat f(0,a)=\hat f(0,b)$, but we do not have any preferred parametrization. As in \eqref{not:vel_t}, $ \partial_s\hat{f}(t,s) $ will denote the velocity of the curve $ s \mapsto \hat{f}(t,s) $, 
$$
\partial_s\hat{f}(t,s) = (0,\partial_sf(t,s)) = (0,\partial_sx^1(t,s), \partial_sx^2(t,s)),
$$
and $ \partial_s\hat{f}(t_0,s) $ is a basis of the tangent space of front$(t_0)$ whenever $t_0<c(s)$ (see Fig. \ref{fig:velocity}).
\end{conv}

\begin{figure}
\centering
\includegraphics[width=1\textwidth]{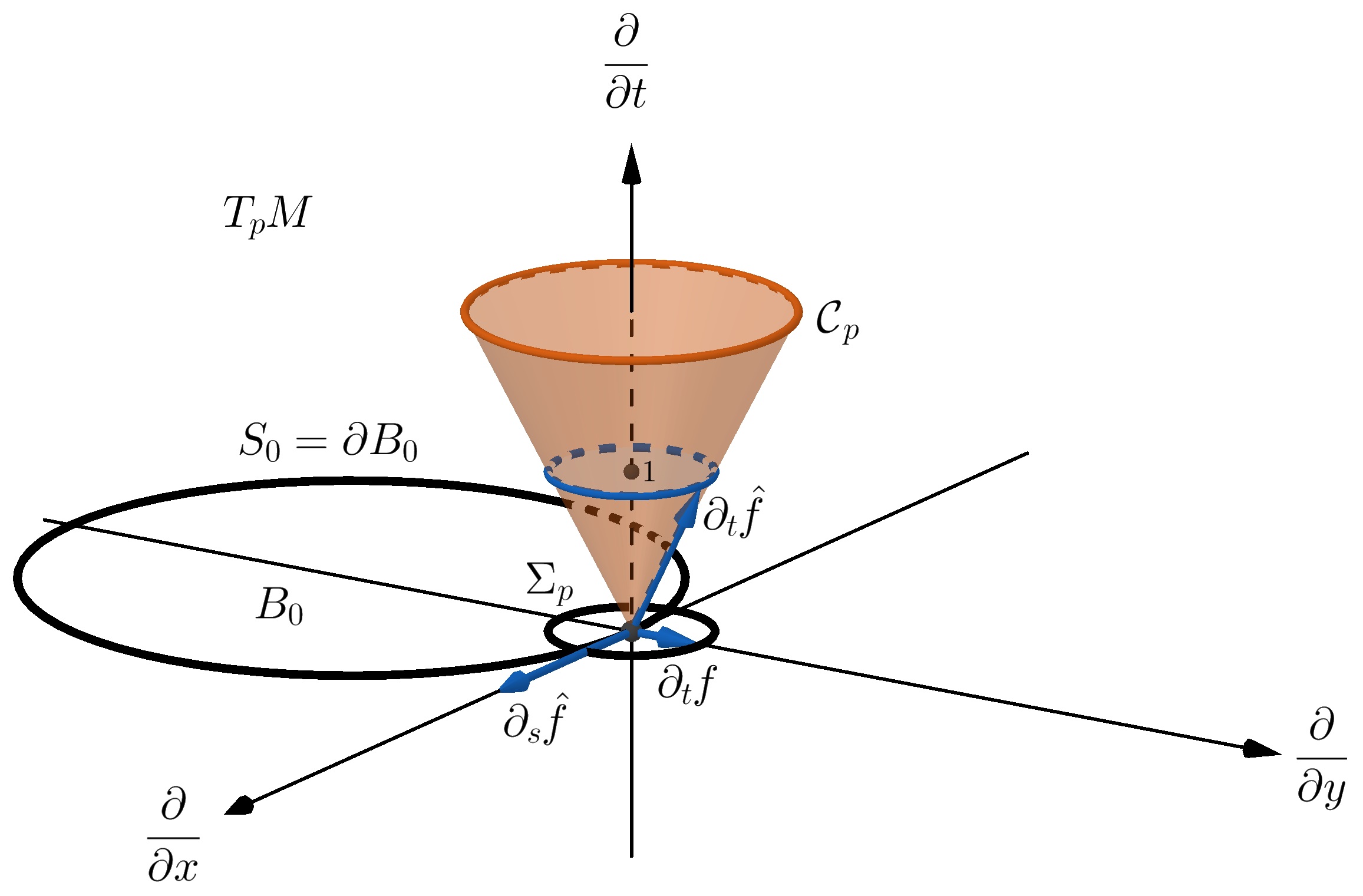}
\caption{The case of dimension $ n = 2 $ and $ r = 1 $. In the tangent space $ T_pM $ of $ M $ at $ p = (0,s) \in S_0 $, the cone $ \mathcal{C}_p $ establishes the lightlike directions. The lightlike vector $ \partial_t\hat{f} = (1,\partial_tf) $ is the velocity of the spacetime trajectory of the wave that heads out from $ B_0 $, being its projection $ \partial_tf \in \Sigma_p $ the velocity of the wave through the space $ N $. The tangent direction to $ S_0 $ is marked by $ \partial_s\hat{f} $ (but its ``length'' plays no role). Recall $ \partial_t\hat{f} \bot_G \partial_s\hat{f}$, i.e., $\partial_t f \bot_F \partial_s f$.}
\label{fig:velocity}
\end{figure}

In order to calculate this wavemap and its wavefronts, our Thm. \ref{th:geod_gen} can be particularized to give an ODE solution.
 
\begin{cor}
\label{th:geod_wildfires}
For each $ s \in S $, the wavemap $ \hat{f}(t,s) = (t,f(t,s))$ $ = (x^0,x^1(t,s),x^2(t,s)) $ of a wildfire is given by the following ODE system:
$$
\partial_t^2x^k = -\gamma_{\ ij}^k(\partial_t\hat{f})\partial_tx^i\partial_tx^j + \gamma_{\ ij}^0(\partial_t\hat{f})\partial_tx^i\partial_tx^j\partial_tx^k, \quad  k=1,2.
$$
Therefore, the spatial trajectories of the fire are the solutions $ f(t,s) = (x^1(t,s),x^2(t,s)) $ whose initial conditions satisfy
\begin{itemize}
\item $ f(0,s) = (x_0^1(s),x_0^2(s)) = s $,
\item $ \partial_tf(0,s) = (\partial_tx^1(0,s),\partial_tx^2(0,s)) $ is the unique $ F $-unit vector $ F $-orthogonal to $ \partial_sf(0,s) $ and pointing outwards from $B_0$.
\end{itemize}
\end{cor}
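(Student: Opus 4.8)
The plan is to obtain this corollary as a direct specialization of Thm.~\ref{th:geod_gen} to the wildfire dimensions $n=2$ and $r=1$, so the bulk of the argument is bookkeeping rather than new analysis. First I would invoke Thm.~\ref{th:geod_gen} essentially verbatim: setting $n=2$ yields precisely the displayed ODE system for $k=1,2$, with the formal Christoffel symbols $\gamma_{\ ij}^k(\partial_t\hat f)$ evaluated along the lightlike direction as in \eqref{e_18}. Since nothing in the derivation of \eqref{eq:geod_t_gen} used the codimension $r$, the evolution equation transfers unchanged, and the position initial condition $f(0,s)=s$ is copied directly.

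The only genuine step is translating the general initial velocity data into the stated form. For $r=1$ the fibre $\mathds{S}^{r-1}=\mathds{S}^0$ consists of two points, so the parameter $u$ ranges over a two-element set; following Conv.~\ref{convention2} and the discussion opening \S\ref{subsec:orth_cond}, I would fix once and for all the point whose associated lightlike direction projects outwards $B_0$, which legitimately suppresses the $u$-dependence and turns the wavemap into $\hat f\colon[0,\infty)\times S\to M$. Next I would note that, since $S=S_0$ is now a hypersurface of $N$, the tangent space $T_sS_0$ is one-dimensional and, by Conv.~\ref{conventionfire} (where $s\mapsto\hat f(0,s)$ is a regular parametrization), it is spanned by $\partial_s f(0,s)$. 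Hence the condition $u\in\Sigma_{(0,s)}\cap S_0^{\perp_F}$ of Thm.~\ref{th:geod_gen}, which asserts $F(u)=1$ together with $u\perp_F w$ for all $w\in T_sS_0$, collapses to saying that $\partial_t f(0,s)$ is an $F$-unit vector $F$-orthogonal to $\partial_s f(0,s)$. Because $\Sigma_{(0,s)}$ is strongly convex there are exactly two such vectors, and selecting the outward sheet picks out the unique one pointing outwards $B_0$, matching the statement.

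The main---and essentially only---subtlety I anticipate is ensuring that the outward/inward dichotomy in $\mathds{S}^0$ is made coherently along the whole of $S_0$. This is precisely where connectedness and orientability of $S_0\cong\mathds{S}^1$ (Conv.~\ref{conventionfire}) enter: they guarantee a globally consistent choice of the outward sheet of $\nu(S_0)$, so that the resulting $\hat f$ is a single smooth wavemap rather than a two-valued object. Once this choice is fixed, no further estimates are needed and the corollary follows.
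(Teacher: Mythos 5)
Your proposal is correct and matches the paper's treatment: the paper states this result as a direct particularization of Thm.~\ref{th:geod_gen} to $n=2$, $r=1$, with the dependence on $u\in\mathds{S}^0$ dropped by selecting, exactly as you do, the lightlike direction whose projection points outwards $B_0$ (the text preceding the corollary and Conv.~\ref{conventionfire} handle the coherence of that choice along $S_0$). Your additional observations on the two-point fibre and the global consistency of the outward sheet are accurate but add nothing beyond the paper's own reduction.
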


However, a PDE approach has been studied in the literature \cite{R,M16,M17}. In our framework, we can naturally obtain an equivalent PDE system in the spacetime. Then, this can be formulated as a purely ``spatial'' solution in terms of the (time-dependent) Finsler metric $F$.

\begin{thm}
\label{th:ort_cond}
For some $\varepsilon>0$, the wavemap $\hat f$ of a wildfire is characterized in $[0,\varepsilon)\times S$ by $ \hat f(0,s)=(0,s)\in S_0 $ and the following equivalent conditions (expressible as PDE's):  
\begin{itemize}
\item Orthogonality conditions in terms of $ G $: 
\begin{equation}
\label{eq:ort_cond_G}
\left\lbrace{
\begin{array}{l}
t \mapsto \hat{f}(t,s) = (t,f(t,s)) \ \text{\text{is a lightlike curve pointing outwards}},\\
\partial_t\hat{f}(t,s) \bot_G \partial_s\hat{f}(t,s).
\end{array}
}\right.
\end{equation}
\item Orthogonality conditions in terms of $ F $:
\begin{equation}
\label{eq:ort_cond_F}
\left\lbrace{
\begin{array}{l}
F(\partial_tf(t,s))  = 1, \ \text{\textit{with}} \ \partial_tf(t,s) \ \text{\textit{pointing outwards}},\\
\partial_tf(t,s) \bot_F \partial_sf(t,s).
\end{array}
}\right.
\end{equation}
\end{itemize}
\end{thm}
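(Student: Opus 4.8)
The plan is to treat the theorem as three separate claims: the equivalence of the two orthogonality systems, the verification that the wavemap $\hat f$ satisfies them, and the converse (that the systems, with the initial condition, pin down $\hat f$). The equivalence of \eqref{eq:ort_cond_G} and \eqref{eq:ort_cond_F} is the easy part and I would dispose of it first, purely by the dictionary \eqref{eq:equiv_G_F}. Writing $\hat v=\partial_t\hat f=(1,\partial_t f)\in T_pM\setminus\mathrm{Span}(\partial/\partial t)$ and $\hat w=\partial_s\hat f=(0,\partial_s f)$, the first lines agree because $G(\hat v)=0\Leftrightarrow F(\partial_t f)=1$ and the ``pointing outwards'' clause refers to the same spatial vector $\partial_t f$; the second lines agree because $\hat v\perp_G\hat w\Leftrightarrow\partial_t f\perp_F\partial_s f$ by \eqref{eq:equiv_G_F}. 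So the two systems are literally one condition read through $G$ or through $F$.

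Next I would show that the wavemap satisfies \eqref{eq:ort_cond_G} on $[0,\varepsilon)\times S$, with $\varepsilon$ the uniform lower bound for the null cut function from Thm.~\ref{cor:zermelo}. That $t\mapsto\hat f(t,s)$ is lightlike and outward is built into the wavemap (it is the $t$-parametrized cone geodesic with initial velocity in $\nu(S_0)$, outward side selected, \S\ref{subsec:orth_cond}), and is already encoded in the ODE of Cor.~\ref{th:geod_wildfires}. The real content is $\partial_t\hat f\perp_G\partial_s\hat f$ for \emph{every} $t$, not just $t=0$: this is a Gauss-type lemma. I would prove it by showing $E(t):=g^G_{\partial_t\hat f}(\partial_t\hat f,\partial_s\hat f)$ is $t$-constant along each generator. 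Differentiating with the Chern connection of $G$ (reference vector $V=\partial_t\hat f$), the correction to metric compatibility vanishes since it is the Cartan tensor of $G$ contracted with $D_t V=0$; using that $V$ is autoparallel and the symmetry $D_t\partial_s\hat f=D_s\partial_t\hat f$, one finds $E'(t)=g^G_V(V,D_sV)=\tfrac12\,\partial_s\big(g^G_{\partial_t\hat f}(\partial_t\hat f,\partial_t\hat f)\big)=\tfrac12\,\partial_s\,G(\partial_t\hat f)=0$, the last equality because every generator is lightlike. As the initial velocity lies in $\nu(S_0)$, $E(0)=0$, hence $E\equiv0$. Care is needed because $t$ is not the affine parameter, so the computation is really carried out along the affine reparametrization $\tilde f$ of \S\ref{subsec:cone_geod} and transferred back, which is legitimate since $g^G_v(v,\cdot)$ is positively homogeneous in $v$ and so the orthogonality relation is reparametrization invariant. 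Alternatively, and more synthetically, for $t<\varepsilon$ the image of $\hat f$ coincides with the smooth part of $\partial J^+(B_0)$, which by Prop.~\ref{prop:zermelo} is a null hypersurface foliated by the generators $t\mapsto\hat f(t,s)$; the generator direction spans the radical of the induced degenerate metric and is therefore $G$-orthogonal to the whole tangent plane, in particular to $\partial_s\hat f$.

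Finally, for the converse I would show that \eqref{eq:ort_cond_G} together with $\hat f(0,s)=(0,s)$ determines $\hat f$ uniquely. Any smooth $(t,s)\mapsto(t,g(t,s))$ obeying the first line has lightlike velocity $\partial_t\hat g$, so $\partial_t\hat g\perp_G\partial_t\hat g$; combined with the second line it is $G$-orthogonal to the full tangent plane $\mathrm{Span}(\partial_t\hat g,\partial_s\hat g)$. Here the hypothesis $n=2$ is essential: this plane is a hypersurface of the $3$-dimensional $M$, so its image is a null hypersurface with $\partial_t\hat g$ as null generator, and null generators of null hypersurfaces are cone pregeodesics; being lightlike and $t$-parametrized, each $t\mapsto\hat g(t,s)$ is a $t$-parametrized cone geodesic. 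At $t=0$ the conditions force $\partial_t\hat g(0,s)$ to be lightlike, outward, and $G$-orthogonal to the tangent $\partial_s\hat g(0,s)$ of $S_0$, i.e.\ $\partial_t\hat g(0,s)\in\nu(S_0)$; hence $\hat g$ and $\hat f$ are cone geodesics with identical initial data and coincide by uniqueness. The step I expect to be the main obstacle is the orthogonality-propagation in the forward direction: verifying in the Lorentz--Finsler setting that the Cartan-tensor defect of the Chern connection and the non-affine $t$-parametrization provably cancel so that $E'(t)=0$. Everything else reduces to the dictionary \eqref{eq:equiv_G_F} and the standard null-hypersurface/geodesic correspondence already available from the references.
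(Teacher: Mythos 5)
Your proposal is correct, but it distributes the work differently from the paper. The equivalence of \eqref{eq:ort_cond_G} and \eqref{eq:ort_cond_F} via \eqref{eq:equiv_G_F} is identical in both. For the forward direction, however, the paper argues causally: by Thm.~\ref{cor:zermelo} the generators of the wavemap are strictly first-arriving on $[0,\varepsilon]$, hence time-minimizing between any two fronts, hence $G$-orthogonal to the fronts by Cor.~\ref{cor:zermelo0}. You instead prove a Gauss-type lemma, propagating $E=g^G_{\partial_t\hat f}(\partial_t\hat f,\partial_s\hat f)=0$ along each generator with the Chern connection; your computation is sound (the Cartan terms vanish because the reference vector occupies a slot, and $\partial_s G(\partial_\rho\tilde f)=0$ since every longitudinal curve is lightlike), and it is in fact the very computation the paper performs --- only the paper deploys it in the \emph{converse}, where the covariant derivative of the tangent is unknown rather than zero and one solves for it in the degenerate plane. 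Your forward argument buys something real: it gives orthogonality wherever $\hat f$ is smooth, not merely below the cut value (consistent with Rem.~\ref{r_PDEyMarkvorsen}), and does not need Thm.~\ref{cor:zermelo} for that implication; the paper's causal route, in exchange, avoids all reparametrization bookkeeping. Two points you should tighten. First, when transferring between the affine parameter $\rho$ and $t$, homogeneity of $g^G_v(v,\cdot)$ handles the rescaling of the tangent vector, but the variation field also shifts, $\partial_s\tilde f=\partial_s\tilde t\,\partial_t\hat f+\partial_s\hat f$; you need $g^G_V(V,\partial_s\tilde f)=g^G_V(V,\partial_s\hat f)$, which holds precisely because $g^G_V(V,V)=0$, and this one-line check should be stated. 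Second, in the converse you black-box the fact that null generators of a null hypersurface are cone pregeodesics; in the Lorentz--Finsler setting this is not an off-the-shelf Lorentzian fact (the connection is anisotropic and metric compatibility holds only up to Cartan terms), and it is exactly what the paper proves by hand: $\partial_t\hat g$ and $\partial_s\hat g$ span the degenerate plane $T_{\partial_t\hat g}\mathcal{C}$ (here $n=2$ enters, as you correctly note), so the pregeodesic condition reduces to $g^G_{\gamma'}(D\gamma',\gamma')=0$ and $g^G_{\gamma'}(D\gamma',J)=0$, verified via the same compatibility and symmetry identities. Either cite a Finslerian source for this (e.g.\ \cite{AJ}, \cite{tesis}) or simply run your own forward computation backwards, which requires no new ideas.
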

\begin{proof}
Let us verify first that $ \hat{f} $ satisfies the stated conditions. By the definition of $ \hat{f} $, the first condition in \eqref{eq:ort_cond_G} holds everywhere and the second one for $t=0$. By the definition of the fronts, any first-arriving trajectory must also be time-minimizing between each two fronts and, so, orthogonal to the fronts by Cor. \ref{cor:zermelo0}. Thus, the required $\varepsilon$ follows from Thm. \ref{cor:zermelo}. The PDE system expression is straightforward (see \eqref{eq:rich1}, \eqref{eq:rich2} below for the explicit one in the case of elliptic indicatrices). For the equivalence between these conditions and \eqref{eq:ort_cond_F}, recall \eqref{eq:equiv_G_F}.

Conversely, as we have just proven that the wavemap is a solution of the orthogonality conditions, it is enough to see that, in fact, it is the only solution. Let $ \hat f $ be an arbitrary solution of \eqref{eq:ort_cond_G} and denote by $ \gamma_s(t) $ and $ \beta_t(s) $ the corresponding longitudinal and transversal curves $ t \mapsto \hat f(t,s) $ and $ s \mapsto \hat f(t,s) $, resp. Our aim is to prove that $ \gamma_s $ is a cone geodesic (i.e., lightlike pregeodesic) for every $ s\in S $ because then, the uniqueness of $ t $-parametrized cone geodesics with the same initial conditions implies that $ \gamma_s $ coincides with the wavemap from $ s $, i.e., $ \hat f $ is the wavemap. Therefore, it is enough to prove that $ D_{\gamma_s}^{\gamma'_s}\gamma'_s = \lambda\gamma'_s $ for some real function $ \lambda $, where $ D^{\gamma'_s}_{\gamma_s} $ denotes the covariant derivative (associated with the Chern connection) along $ \gamma_s $ having $ \gamma'_s $ as a reference vector (see \cite{J13} for background and notation). Note that $ \gamma_s(t) $ can be regarded as a variation, being $ J_s(t) \coloneqq \partial_s\hat f(s,t) $ the corresponding variational vector field, which is nonzero for $ t $ smaller than some $ \varepsilon'>0 $ (a posteriori, this $ \varepsilon' $ can be assumed to be equal to the $ \varepsilon $ found in the first paragraph of the proof). By \eqref{eq:ort_cond_G}, $ \gamma'_s $ is lightlike and $ g^G_{\gamma'_s}(\gamma'_s,J_s)=0 $, so $ J_s(t) $ and $ \gamma'_s(t) $ generate the tangent plane $ T_{\gamma'_s(t)}\mathcal{C}_{\gamma_s(t)} $, which is degenerate in the direction of $ \gamma'_s $. Therefore, the condition $ D_{\gamma_s}^{\gamma'_s}\gamma'_s = \lambda\gamma'_s $ is equivalent to
$$
\left\lbrace
\begin{array}{l}
g^G_{\gamma'_s}(D_{\gamma_s}^{\gamma'_s}\gamma'_s,\gamma'_s)=0,\\
g^G_{\gamma'_s}(D_{\gamma_s}^{\gamma'_s}\gamma'_s,J_s)=0.
\end{array}
\right.
$$
To prove the first equation, note that $ g_{\gamma'_s}(\gamma'_s,\gamma'_s)=0 $ and therefore, using the almost $ g^G $-compatibility of the Chern connection \cite[Eq. (4)]{J13},
$$
0=\frac{d}{dt}g^G_{\gamma'_s}(\gamma'_s,\gamma'_s)=2g^G_{\gamma'_s}(D_{\gamma_s}^{\gamma'_s}\gamma'_s,\gamma'_s)+2C_{\gamma'_s}(D_{\gamma_s}^{\gamma'_s}\gamma'_s,\gamma'_s,\gamma'_s)=2g^G_{\gamma'_s}(D_{\gamma_s}^{\gamma'_s}\gamma'_s,\gamma'_s),
$$
where the term in the Cartan tensor $ C_{\gamma'_s} $ vanishes by homogeneity, as it is evaluated repeateadly in $ \gamma'_s $ \cite[Eq. (2)]{J13}. To prove the second one, recall that $ D_{\beta_t}^{\gamma'_s}\gamma'_s = D_{\gamma_s}^{\gamma'_s}\beta'_t $ (see \cite[Prop. 3.2]{J13}), so
\begin{equation}
\nonumber
\begin{split}
0 = & \frac{d}{ds}g^G_{\gamma'_s}(\gamma'_s,\gamma'_s)=2g^G_{\gamma'_s}(D_{\beta_t}^{\gamma'_s}\gamma'_s,\gamma'_s)+2C_{\gamma'_s}(D_{\beta_t}^{\gamma'_s}\gamma'_s,\gamma'_s,\gamma'_s)\\
= & 2g^G_{\gamma'_s}(D_{\gamma_s}^{\gamma'_s}\beta'_t,\gamma'_s)=2g^G_{\gamma'_s}(D_{\gamma_s}^{\gamma'_s}J_s,\gamma'_s)=-2g^G_{\gamma'_s}(J_s,D_{\gamma_s}^{\gamma'_s}\gamma'_s)
\end{split}
\end{equation}
(for the last equality, take $ t $-derivatives in $ g^G_{\gamma'_s}(J_s,\gamma'_s)=0$).
\end{proof}

\begin{rem}\label{r_PDEyMarkvorsen}
The orthogonality conditions in terms of $F$ \eqref{eq:ort_cond_F} are the ones Markvorsen arrives at in \cite[Cor. 7.4]{M16} (time-independent case) and \cite[Thm. 4.4]{M17} (time-dependent case) using a Lagrangian (Finslerian or rheonomic) on the space.

However, the spacetime interpretation provides not only a neat proof of the uniqueness of solution to \eqref{eq:ort_cond_F}, but also a more accurate result. Indeed, the characterization of $\hat f$ holds for all the points $(t,s)$ with $t < c(s)$, as well as in the points with $t=c(s)$ by continuity. If such a point $(c(s),s)$ is not a focal point for $G$, then the characterization can be extended to a neighborhood of it. Nevertheless, if $(c(s),s)$ is a focal point then $\partial_sf(t_0,s_0)$ will vanish and the second orthogonality condition of each pair will give no information beyond it.
\end{rem}

\section{Ellipsoids and quadratic simplification}\label{s5}
The simplest analytical anisotropic approximation to the propagation of the wave occurs when at each $p\in M$, the field of velocities $\Sigma_p$ is an ellipsoid, not necessarily centered at the origin,  which includes the case of Richards' model for wildfires \cite{R}.
The ellipsoidal character of $\Sigma$ implies that the corresponding cone structure $\C$ will be compatible with a classical Lorentz metric $g$, apart from the Lorentz-Finsler one $G=dt^2-F^2$. So, although our computation of the wavemap $\hat f$ applies to this specific case, next $\hat f$ will also be computed by means of the geodesics of $g$. This widely simplifies the equations \eqref{eq:geod_t_gen} because the formal Christoffel symbols $\gamma^k_{ij}$ in \eqref{eq:christ_formal} will become the Christoffel ones $\Gamma^k_{ij}$ of the Lorentz metric $g$. So, they will depend only on the point $p=(t,x)$ but not on the direction, skipping the Finslerian entanglement of $F$. From a technical viewpoint, we take into account and develop further the stationary-to-Randers correspondence in \cite{CJS11}.

\subsection{Trajectories using a classical Lorentz metric}
Consider a hypersurface $ \Sigma^0 \subset \textup{Ker}(dt) $ of centered ellipsoids $ \Sigma^0_p $ varying smoothly with $ p \in M $ (i.e., $ \Sigma^0 $ is transverse to the fibers of $ \text{Ker}(dt) $) and a smooth section $ W $ of $ \textup{Ker}(dt)$ (i.e., $W$ is a time-dependent vector field on $N$) so that
$$
\Sigma \coloneqq \Sigma^0 + W = \lbrace v+W_{\pi^M(v)}: v \in \Sigma^0 \rbrace.
$$
We will refer to $W$ as the {\em wind}, which represents any physical phenomenon that generates a displacement on the propagation, usually associated with the medium where the wave propagates. Indeed, $ W $ can represent the wind if the wave propagates through the air, but also water streams if the propagation takes place in the sea or in a river, or other phenomena. As stated in \S \ref{subsec:general_setting}, we will assume that the wind is ``mild'', which means that the zero section lies in the (open) region enclosed by $ \Sigma $ (the unit ball of $ F $); this guarantees that $ \Sigma $ properly defines a Finsler metric of Randers type for each $t$. For all $ p \in M $, the vectors $ v \in \Sigma^0_p $ are characterized by the ellipsoid equation $ Q_p(v) = 1 $, i.e.,
$$\Sigma^0_p = \lbrace v \in \textup{Ker}(dt_p): Q_p(v) = 1 \rbrace,
$$
where $ Q_p $ is a quadratic form that depends on the orientation of the ellipsoid and its semi-axes. $ Q_p $ determines a norm $ H_p \coloneqq \sqrt{Q_p} $ in $ \textup{Ker}(dt_p) $ that satisfies the parallelogram law, so it induces a Euclidean scalar product
$$
h_p(v,u) \coloneqq \frac{1}{4}(Q(v+u) - Q(v-u)), \quad \forall v,u \in \textup{Ker}(dt_p),
$$
being $ \Sigma^0_p $ its indicatrix. Since the ellipsoids vary smoothly from point to point, one has a Riemannian metric $ h $ on $ \textup{Ker}(dt) $. If $ \Sigma $ is the indicatrix of the Finsler metric $ F $, given by the displaced ellipses, then
\begin{equation}\label{eZermeloData}
h_p\left( \frac{v}{F_p(v)}-W_p , \frac{v}{F_p(v)}-W_p \right) = 1, \quad \hbox{that is,} \quad H_p(v-F_p(v)W_p) = F_p(v),
\end{equation}
for any $ p \in M,  v \in \textup{Ker}(dt_p) $ (recall $ H_p(v) = \sqrt{h_p(v,v)}$). The pair $(h,W)$ is the {\em Zermelo data} for the (time-dependent) Randers metric $F$. The formula \eqref{eZermeloData} characterizes them and the constraint $h(W,W)<1$ is implicit in the assumption of mild wind. These are the elements to construct the required Lorentzian metric\footnote{Recall that signature $(+,-\dots ,-)$ is used here, in contrast with \cite{CJS11,CJS}.} $g$.

\begin{prop}
\label{lem:lorentz}
Let $\Sigma$ be determined by Zermelo data $(h,W)$ as above. Its  cone structure $\C$ (associated with
$ G = dt^2-F^2 $) is also the cone structure of the Lorentz metric $ g = \Lambda dt^2 - 2\omega dt - h $, where $ \Lambda \coloneqq 1-h(W,W) $ and $ \omega \coloneqq h(\cdot,-W) $.
\end{prop}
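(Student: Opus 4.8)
The plan is to prove that the future-directed lightlike vectors of $g$ form exactly the cone structure $\C$; since a classical Lorentz metric determines its two cone structures (future and past) through its null vectors, and a cone structure is nothing but its hypersurface of lightlike directions, this is all that is needed. By the cone-triple characterization \eqref{eq:cone_triple} for $(dt,\partial/\partial t,F)$, the future-directed $\C$-lightlike vectors are precisely those of the form $\hat v\equiv(F(v),v)$ with $v\in\textup{Ker}(dt_p)\setminus\{0\}$ (Notation~\ref{notacion}). So I would fix such a $\hat v$, write $\tau:=dt(\hat v)=F(v)$ and $v:=\pi^{\Omega}(\hat v)$, and reduce the whole matter to the single scalar identity $g(\hat v,\hat v)=0$.

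The core is the direct computation of $g(\hat v,\hat v)$. Using that $h$ vanishes on $\partial/\partial t$ and that $\omega=h(\cdot,-W)$, one gets $g(\hat v,\hat v)=\Lambda\tau^2+2h(v,W)\tau-h(v,v)$. Then I would insert the Zermelo characterization \eqref{eZermeloData}: expanding $h(v/F(v)-W,\,v/F(v)-W)=1$ and clearing $F(v)^2$ yields $h(v,v)-2F(v)h(v,W)+F(v)^2h(W,W)=F(v)^2$, which rearranges to exactly $\Lambda F(v)^2+2h(v,W)F(v)-h(v,v)=0$. Hence $g(\hat v,\hat v)=0$ at $\tau=F(v)$, i.e.\ $g$ vanishes on every $\C$-lightlike vector. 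Equivalently, one may simply read $F(v)$ off as the unique positive root of the quadratic $\Lambda\tau^2+2h(v,W)\tau-h(v,v)=0$ in $\tau$, recognizing it as the standard Randers formula for the Zermelo data $(h,W)$.

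To promote this pointwise nullity to the claim that $\C$ is the cone structure of $g$, I would check that $g$ is genuinely Lorentzian and that the identification is on the future side. The restriction of $g$ to $\textup{Ker}(dt)$ is $-h$, negative definite, while $g(\partial/\partial t,\partial/\partial t)=\Lambda=1-h(W,W)>0$ by the mild-wind assumption; a Schur-complement computation gives $\det g=(-1)^n\det h\neq 0$, so $g$ is nondegenerate of index $n$, i.e.\ Lorentzian with signature $(+,-,\dots,-)$. For fixed $v\neq 0$ the quadratic above has product of roots $-h(v,v)/\Lambda<0$, hence exactly one positive root $\tau=F(v)$ and one negative root; the positive one recovers the future $\C$-lightlike vector, so the future null cone of $g$ is precisely $\C$ (and the past one is its antipode).

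I do not anticipate a serious obstacle: the argument is a short computation once \eqref{eZermeloData} is available. The only points demanding care are the bookkeeping of the identification $h(\partial/\partial t,\cdot)=0$ and the sign of $\omega$ in the off-diagonal entries of $g$, together with the repeated use of $h(W,W)<1$, which simultaneously guarantees $\Lambda>0$ (Lorentzian signature) and the existence of a unique positive null root — so that the two cones, \emph{with their time orientations}, genuinely coincide rather than merely their unoriented null quadrics.
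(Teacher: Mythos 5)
Your proposal is correct and takes essentially the same route as the paper: its proof is precisely the chain of equivalences obtained by expanding the Zermelo relation \eqref{eZermeloData} into the quadratic $\Lambda\tau^2+2h(v,W)\tau-h(v,v)=0$ and recognizing this as $g\bigl((\tau,v),(\tau,v)\bigr)=0$, which is the core computation in your second paragraph. Your additional verifications --- that $g$ is Lorentzian (via the Schur complement, $\det g=(-1)^n\det h$) and that the quadratic has a unique positive root $\tau=F(v)$, so that the \emph{future} cones coincide with their time orientations --- are left implicit in the paper and are in fact a welcome sharpening, since for a non-reversible Randers $F$ the full zero set of $G=dt^2-F^2$ contains past-pointing vectors $\tau=-F(v)$ that are not $g$-null, so it is only the future cone $\C$ (as your root analysis shows) that is genuinely shared.
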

\begin{proof}
To check that $ G $ and $ g $ share the same lightlike vectors, observe that
\begin{equation}\label{e_26}
\begin{split}
& G(\tau,v) = 0 \Leftrightarrow \tau^2 = F(v)^2 \Leftrightarrow \tau^2 = H(v-\tau W)^2 \\
& \Leftrightarrow \tau^2 = h(v,v) - 2\tau h(v,W) + \tau^2h(W,W) \Leftrightarrow g((\tau,v),(\tau,v)) = 0
\end{split}
\end{equation}
for any $ (\tau,v) \in T_pM, p \in M $, using \eqref{eZermeloData} and the definition of $g$.
\end{proof}

Now, we can  proceed as in \S \ref{subsec:cone_geod} and obtain the wavemap by solving the geodesic equation, with the (direction-independent) Christoffel symbols
\begin{equation}
\label{eq:christ_g}
\Gamma_{\ ij}^k (t,x) = \frac{1}{2} g^{kr}\left( \frac{\partial g_{rj}}{\partial x^i} + \frac{\partial g_{ri}}{\partial x^j} - \frac{\partial g_{ij}}{\partial x^r} \right)(t,x), \quad i,j,k = 0,\ldots,n
\end{equation}
(using Notation \ref{not:einstein}), where at each $(t,x)\in M$,  putting $h_{ij}=h(\frac{\partial}{\partial x^i},\frac{\partial}{\partial x^j})$,
\begin{equation}\label{eq:g}
\{g_{ij}\} =
\left(\begin{array}{cccc}
\Lambda & -\omega(\frac{\partial}{\partial x^1}) & \cdots & -\omega(\frac{\partial}{\partial x^n})\\
-\omega(\frac{\partial}{\partial x^1}) & -h_{11} & \cdots & -h_{1n}\\
\vdots & \vdots & \ddots & \vdots\\
-\omega(\frac{\partial}{\partial x^n}) & -h_{n1} & \cdots & -h_{nn}
\end{array}\right).
\end{equation}

\begin{lemma}\label{lem:666}
Any $u\in \Sigma_{(0,s)}\cap S_0^{\perp_F}$ (i.e. $u$ is $F$-unit and $F$-orthogonal to $S_0$, so that $\hat u=(1,u)\in \nu(S_0)_{(0,s)}$) can be written as 
\begin{equation}\label{e_266}
u=v+W_{(0,s)}, \quad \hbox{where} \quad h(v,v)=1, \ v \in S_0^{\perp_h},
\end{equation}
i.e., $v\in \text{Ker}(dt_{(0,s)}) (\equiv T_sN)$ is $h$-unit and $h$-orthogonal to $S_0$.
\end{lemma}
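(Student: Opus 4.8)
The plan is to reduce both the norm condition and the orthogonality condition to statements about tangent planes of indicatrices, and then exploit the fact that $\Sigma_{(0,s)}$ is an affine translate of $\Sigma^0_{(0,s)}$ by the vector $W_{(0,s)}$. Throughout I work in the single vector space $\textup{Ker}(dt_{(0,s)}) \equiv T_sN$, so $\perp_F$ and $\perp_h$ refer to the Minkowski norm $F_{(0,s)}$ and the Euclidean product $h_{(0,s)}$ on this space, and $u \in S_0^{\perp_F}$ means $g^F_u(u,w)=0$ for every $w$ in $T_{(0,s)}S_0 \equiv T_sS$.

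First I would dispose of the norm condition. Since the indicatrix of $F$ is $\Sigma = \Sigma^0 + W$, one has $F(u) = 1$ iff $u \in \Sigma_{(0,s)}$ iff $v := u - W_{(0,s)} \in \Sigma^0_{(0,s)}$ iff $Q_{(0,s)}(v) = 1$, i.e. $h(v,v) = 1$. This yields at once the decomposition $u = v + W_{(0,s)}$ with $v$ being $h$-unit, so it remains only to prove the orthogonality claim $v \in S_0^{\perp_h}$.

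The key step is the standard identification of orthogonality with tangency to the indicatrix. From the one-homogeneity of $F$ one has $g^F_u(u,w) = F(u)\,dF_u(w)$, so for $u$ with $F(u)=1$ the space $u^{\perp_F} := \{w : g^F_u(u,w) = 0\} = \ker dF_u = T_u\Sigma_{(0,s)}$ is exactly the linear tangent space to the indicatrix of $F$ at $u$; likewise, for $v$ with $h(v,v)=1$, $v^{\perp_h} = T_v\Sigma^0_{(0,s)}$. Now because $\Sigma_{(0,s)}$ is the translate of $\Sigma^0_{(0,s)}$ by the fixed vector $W_{(0,s)}$ and $u = v + W_{(0,s)}$, the translation map carries $v$ to $u$ with differential the identity, so the two linear tangent spaces coincide: $T_u\Sigma_{(0,s)} = T_v\Sigma^0_{(0,s)}$, i.e. $u^{\perp_F} = v^{\perp_h}$. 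Hence $u \in S_0^{\perp_F}$, which means $T_sS \subseteq u^{\perp_F}$, is equivalent to $T_sS \subseteq v^{\perp_h}$, that is $v \in S_0^{\perp_h}$, as desired.

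The only point requiring care --- and the nearest thing to an obstacle --- is to keep the affine and linear pictures separate: the tangent plane to $\Sigma_{(0,s)}$ at $u$ as an affine subspace is $u + T_u\Sigma_{(0,s)}$, and it is its underlying linear part $T_u\Sigma_{(0,s)}$ that must be matched with $v^{\perp_h}$. Translation by $W_{(0,s)}$ moves the base point but fixes this linear part, which is precisely the mechanism that makes $F$-orthogonality of $u$ collapse to $h$-orthogonality of $v$. Alternatively, one could verify the claim by computing $g^F_u$ directly from the Zermelo relation \eqref{eZermeloData}, but the tangent-plane argument avoids the Finslerian bookkeeping entirely.
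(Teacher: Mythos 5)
Your proposal is correct and takes essentially the same route as the paper's own proof: the norm condition is reduced to $h(v,v)=1$ via the translation $\Sigma=\Sigma^0+W$, and the orthogonality follows by identifying $u^{\perp_F}$ with the tangent space to $\Sigma$ at $u$, which the translation by $W_{(0,s)}$ carries (with identity differential) to the tangent space of $\Sigma^0$ at $v$, i.e.\ $v^{\perp_h}$. The only difference is that you make explicit the homogeneity identity $g^F_u(u,\cdot)=F(u)\,dF_u$ and the affine-versus-linear tangent space distinction, both of which the paper leaves implicit.
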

\begin{proof}
Putting $v \coloneqq u-W_{(0,s)}$, $F(u)=1$ is equivalent to $h(v,v)=1$ from \eqref{e_26}. Now, observe that for any $ w \in T_sN $, $ u \bot_F w $ means that $ w $ is tangent to the indicatrix $ \Sigma $ at $ u $, or equivalently, $ w $ is tangent to $ \Sigma^0 = \Sigma-W $ at $ u-W_{(0,s)} $, i.e., $ u-W_{(0,s)} \bot_h w $.
\end{proof}

So, the wavemap $ \hat{f}(t,s,u=v+W_{(0,s)})$ can be regarded as a function of $(t,s,v)\in [0,\infty)\times S\times \mathds{S}^{r-1}$, where $v$ satisfies \eqref{e_266}. This function will be called the {\em Lorentzian wavemap} and denoted with the same letter, that is, $ \hat{f}(t,s,v) (=(t,f(t,s,v))=(t,x^1(t,s,v),\ldots,x^n(t,s,v)))$, with no harm.

\begin{thm} \label{t5.3}
Assume that the propagation of the wave is given by a Randers metric on $ \textup{Ker}(dt) $ with Zermelo data $ (h,W) $. For each $(s,v)\in S\times \mathds{S}^{r-1}$ (identifiable to the $h$-unit orthogonal bundle to $S_0$ in $ \{t=0\} $), the Lorentzian wavemap $\hat f(t,s,v) = (x^0,x^1(t,s,v),\ldots,x^n(t,s,v))
$ is characterized by the following ODE system:
\begin{equation}
\label{eq:geod_t}
\partial_t^2x^k = -\Gamma_{\ ij}^k\partial_tx^i \partial_tx^j + \Gamma_{\ ij}^0\partial_tx^i \partial_tx^j\partial_tx^k, \quad k=1,\ldots,n,
\end{equation}
(see \eqref{eq:christ_g}, \eqref{eq:g} and Prop. \ref{lem:lorentz}) with the initial conditions
\begin{itemize}
\item $ f(0,s,v) = (x^1_0(s,v),\ldots,x^n_0(s,v)) = s $,
\item $ \partial_tf(0,s,v) = v+W_{(0,s)}$. 
\end{itemize}
\end{thm}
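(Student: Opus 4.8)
The plan is to run, \emph{mutatis mutandis}, the proof of Thm.~\ref{th:geod_gen}, but replacing the Lorentz--Finsler metric $G$ with the genuine Lorentz metric $g$ supplied by Prop.~\ref{lem:lorentz}. The crucial enabling fact is that $g$ and $G$ induce \emph{the same} cone structure $\C$ (Prop.~\ref{lem:lorentz}). Since cone geodesics depend only on $\C$ and coincide with the lightlike pregeodesics of any metric inducing $\C$ (\S\ref{s2}), and since a classical Lorentzian metric induces a cone structure whose cone geodesics are exactly its lightlike geodesics, the longitudinal curves $t\mapsto\hat f(t,s,v)$ of the wavemap---which are cone geodesics of $\C$ by definition---are precisely the lightlike $g$-pregeodesics departing from $S_0$. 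Thus the very same curves may be recomputed through $g$, and the entire payoff is that the geodesic equation of $g$ carries the \emph{direction-independent} Christoffel symbols \eqref{eq:christ_g} in place of the formal symbols $\gamma_{\ ij}^k(\partial_t\hat f)$.

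Next I would reparametrize by $t$. An affinely parametrized $g$-geodesic $\tilde f(\rho,s,v)=(\tilde x^0,\dots,\tilde x^n)$ satisfies the classical
$$
\partial_\rho^2\tilde x^k=-\Gamma_{\ ij}^k\,\partial_\rho\tilde x^i\partial_\rho\tilde x^j,\qquad k=0,\dots,n,
$$
the Lorentzian analogue of \eqref{eq:geod_tau_gen}, with $\Gamma_{\ ij}^k$ evaluated at the point $(\tilde x^0,\dots,\tilde x^n)$ alone. Substituting this (for the chosen $k$ and for $k=0$) into the chain-rule identity \eqref{eq:chain_rule}---exactly as in the proof of Thm.~\ref{th:geod_gen}---yields \eqref{eq:geod_t}, the $k=0$ right-hand side vanishing consistently with the $t$-parametrization. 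Note that here \emph{no} homogeneity argument is needed to pin down the direction at which $\Gamma$ is evaluated, since $\Gamma_{\ ij}^k$ does not depend on $\partial_\rho\tilde f$ at all; this is precisely the quadratic simplification advertised in the section heading.

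It remains to identify the initial data. That $f(0,s,v)=s$ is immediate. For the velocity, Cor.~\ref{cor:zermelo0} forces the longitudinal curves to leave $S_0$ as cone geodesics $G$-orthogonal to $S_0$, i.e.\ with $\partial_t f(0,s,v)=u\in\Sigma_{(0,s)}\cap S_0^{\perp_F}$ (the $F$-unit, $F$-orthogonal directions, identified with $\hat u=(1,u)\in\nu(S_0)$). Applying Lemma~\ref{lem:666} rewrites any such $u$ as $u=v+W_{(0,s)}$ with $h(v,v)=1$ and $v\perp_h S_0$, which both produces $\partial_t f(0,s,v)=v+W_{(0,s)}$ and identifies the parameter space $S\times\mathds{S}^{r-1}$ with the $h$-unit orthogonal bundle to $S_0$ inside $\{t=0\}$.

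There is no genuine analytic obstacle here; the statement is essentially a specialization of Thm.~\ref{th:geod_gen} to an ellipsoidal $\Sigma$. The only point demanding a word of care is the first one: one must explicitly invoke that cone geodesics are intrinsic to $\C$, so that passing from $G$ to $g$ (another metric inducing the same $\C$) leaves the longitudinal curves, and hence $\hat f$, unchanged. Once this conceptual step is in place, the reparametrization and the initial-data identification are routine.
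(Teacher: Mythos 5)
Your proposal is correct and follows essentially the same route as the paper, which likewise derives \eqref{eq:geod_t} by repeating the procedure of Thm.~\ref{th:geod_gen} with the Lorentzian metric $g$ of Prop.~\ref{lem:lorentz} (whose Christoffel symbols are direction-independent) and settles the initial conditions via Lem.~\ref{lem:666}. Your explicit appeal to the intrinsic character of cone geodesics---so that replacing $G$ by $g$ leaves the longitudinal curves unchanged---is exactly the justification the paper leaves implicit through Prop.~\ref{lem:lorentz} and the equivalence between cone geodesics and lightlike pregeodesics recalled in \S\ref{s2}.
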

\begin{proof}
Following the same procedure as in Thm. \ref{th:geod_gen} but working with the Lorentzian metric $ g $ and its Christoffel symbols, one directly obtains \eqref{eq:geod_t}. For the initial conditions, just recall Lem. \ref{lem:666}.
\end{proof}

\subsection{Richards' equations for wildfires}
The well-known Richards' equations correspond to the PDE's \eqref{eq:ort_cond_F}  in our Thm. \ref{th:ort_cond}, where $\Sigma$ lies in the ellipsoidal case and it is determined by Zermelo data $(h,W)$, with $ h(W,W)<1 $. For the sake of completeness, we will consider  them  in our framework. Recall that the setting in \S \ref{subsec:wildfires} and \S \ref{subsec:orth_cond} applies; in particular, $n=2$, $r=1$ and $S_0$ is a closed curve (Conv. \ref{conventionfire}). Also, we will use here the notation $ (x,y) \coloneqq (x^1,x^2) $.

\begin{prop}
In the case of wildfires with Zermelo data $(h,W)$, the orthogonality conditions \eqref{eq:ort_cond_F}  become equivalent to
\begin{equation}
\label{eq:ort_cond_1}
1 = h(W,W)-2h(\partial_tf,W)+h(\partial_tf,\partial_tf),
\end{equation}
\begin{equation}
\label{eq:ort_cond_2}
0=h(\partial_tf-W,\partial_sf),
\end{equation}
with $ \partial_tf $ pointing outwards.
\end{prop}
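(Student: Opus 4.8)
The plan is to rewrite the two conditions of \eqref{eq:ort_cond_F} one at a time, using only the algebraic identities already established for Zermelo data. Both conditions are pointwise relations at $p=(t,s)$ in $\textup{Ker}(dt_p)\equiv T_sN$, so the whole argument is purely algebraic: no geodesic or causal theory is needed beyond what Thm.~\ref{th:ort_cond} has provided. The only inputs I would use are the chain of equivalences in \eqref{e_26} (which encodes \eqref{eZermeloData}) and the description of $F$-orthogonality in terms of $h$ extracted from the proof of Lem.~\ref{lem:666}.

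For the first condition I would set $v:=\partial_tf$ and apply \eqref{e_26} with $\tau=1$. Since $G((1,v),(1,v))=0$ is equivalent to $F(v)=1$, and the last equivalence in \eqref{e_26} expands the normalization as $\tau^2 = h(v,v)-2\tau\, h(v,W)+\tau^2 h(W,W)$, substituting $\tau=1$ yields $1=h(v,v)-2h(v,W)+h(W,W)$, which is exactly \eqref{eq:ort_cond_1}. The ``pointing outwards'' clause is carried along unchanged, since it constrains the same vector $\partial_tf$ and is not affected by the rewriting.

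For the second condition, recall from the proof of Lem.~\ref{lem:666} that for $u\in\Sigma_p$ and $w\in T_sN$ the relation $u\bot_F w$ holds precisely when $w$ is tangent to the indicatrix $\Sigma$ at $u$, equivalently tangent to the centred ellipsoid $\Sigma^0=\Sigma-W$ at $u-W_p$, i.e. $u-W_p\bot_h w$. Taking $u=\partial_tf$ and $w=\partial_sf$ turns $\partial_tf\bot_F\partial_sf$ into $\partial_tf-W\bot_h\partial_sf$, that is $h(\partial_tf-W,\partial_sf)=0$, which is \eqref{eq:ort_cond_2}. I do not anticipate a genuine obstacle: the entire content of the statement is the dictionary between the Randers/Finsler quantities and their Zermelo-data counterparts, and that dictionary is already recorded in \eqref{e_26} and Lem.~\ref{lem:666}. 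The only point to watch is that the equivalence must run in both directions, which it does, since every step in \eqref{e_26} is reversible and the tangency characterization of $\bot_F$ is stated as an ``if and only if''.
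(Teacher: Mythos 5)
Your proposal is correct and takes essentially the same approach as the paper, whose proof likewise consists of translating the two conditions of \eqref{eq:ort_cond_F} through the Zermelo dictionary of Lem.~\ref{lem:666}: the normalization $F(\partial_tf)=1 \Leftrightarrow h(\partial_tf-W,\partial_tf-W)=1$ (your route through \eqref{e_26} with $\tau=1$ is the same expansion coming from \eqref{eZermeloData}) and the tangency translation $\partial_tf \bot_F \partial_sf \Leftrightarrow \partial_tf-W \bot_h \partial_sf$. Your attention to the reversibility of each step, and to the fact that the tangency characterization of $\bot_F$ is applied with $\partial_tf\in\Sigma$ (legitimate, since the first condition of each pair may be assumed when rewriting the second), matches how the paper uses Lem.~\ref{lem:666}.
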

\begin{proof} Just recall that by Lem. \ref{lem:666}, 
\begin{equation}
\nonumber
F(\partial_tf)=1 \Leftrightarrow h(\partial_tf-W,\partial_tf-W)=1,
\end{equation}
\begin{equation}
\nonumber
\partial_tf \bot_F \partial_sf \Leftrightarrow \partial_tf-W \bot_h \partial_sf.
\end{equation}
\end{proof}

Fixing the natural basis $ \{ \frac{\partial}{\partial x}\rvert_p, \frac{\partial}{\partial y}\rvert_p \} $ on $ \textup{Ker}(dt_p) \equiv \R^2 $ and working with coordinates, write $W_p=(w_1(p),w_2(p))$. The quadratic form for a centered ellipse with semi-axes $ a(p),b(p) $ rotated an angle $ \theta(p) $\footnote{Note that $ a,b $ and $ \theta $, as well as $w_1, w_2$, depend on $ p = (t,x,y) $ so that, in particular, they may vary over time. In the following equations this dependence will be assumed implicitly in order to avoid clutter.} in the clockwise direction is
$$
Q_p(v_1,v_2) = \left(\frac{v_1\cos\theta-v_2\sin\theta}{a}\right)^2 + \left(\frac{v_1\sin\theta+v_2\cos\theta}{b}\right)^2,
$$
for all $ v = (v_1,v_2) \in \textup{Ker}(dt_p)$. Thus, the matrix associated with $ h_p $ is
\begin{equation}\label{eq:h_theta}
\{h_{ij}\} = \frac{1}{a^2 b^2}
\left(\begin{array}{cc}
a^2\sin^2\theta+b^2\cos^2\theta & (a^2-b^2)\sin\theta\cos\theta \\
(a^2-b^2)\sin\theta\cos\theta & a^2\cos^2\theta+b^2\sin^2\theta
\end{array}\right).
\end{equation}

\begin{thm}[Richards' equations]
The wavemap $ \hat{f}(t,s) = (t,f(t,s)) = (t,x(t,s),y(t,s)) $ of a wildfire determined by Zermelo data $(h,W)$ with $h$ in \eqref{eq:h_theta} and $ W= (w_1,w_2) $, $ h(W,W)<1 $, is characterized by the following PDE system for $ \partial_sf(t,s) = (\partial_sx(t,s),\partial_sy(t,s)) $, $ \partial_tf(t,s) = (\partial_tx(t,s),\partial_ty(t,s)) $:
\begin{equation}
\label{eq:rich1}
\partial_tx = \pm \frac{a^2\cos\theta(\partial_sx\sin\theta+\partial_sy\cos\theta)-b^2\sin\theta(\partial_sx\cos\theta-\partial_sy\sin\theta)}{\sqrt{a^2(\partial_sx\sin\theta+\partial_sy\cos\theta)^2+b^2(\partial_sx\cos\theta-\partial_sy\sin\theta)^2}} + w_1,
\end{equation}
\begin{equation}
\label{eq:rich2}
\partial_ty = \pm \frac{-a^2\sin\theta(\partial_sx\sin\theta+\partial_sy\cos\theta)-b^2\cos\theta(\partial_sx\cos\theta-\partial_sy\sin\theta)}{\sqrt{a^2(\partial_sx\sin\theta+\partial_sy\cos\theta)^2+b^2(\partial_sx\cos\theta-\partial_sy\sin\theta)^2}} + w_2,
\end{equation}
where the $ \pm $ has to be chosen in order for $ \partial_tf(t,s) $ to point outwards,\footnote{Namely, choose $ + $ for $S_0$  counter-clockwise parametrized, and $ - $ otherwise.} and the initial condition $ {f}(0,s) = (x_0(s),y_0(s)) $ (identifiable to $s$) holds.
\end{thm}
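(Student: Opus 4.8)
\emph{Proof plan.} The plan is to reduce the statement to an elementary pointwise linear-algebra problem and then just make the relevant orthogonal vector explicit. By Thm.~\ref{th:ort_cond} the wavemap is characterized (on $[0,\varepsilon)\times S$) by the orthogonality conditions, which in the present ellipsoidal case were rewritten in the previous Proposition as \eqref{eq:ort_cond_1}--\eqref{eq:ort_cond_2}; conversely these determine $\hat f$ uniquely. So it suffices to solve \eqref{eq:ort_cond_1}--\eqref{eq:ort_cond_2} for $\partial_t f$ in terms of $\partial_s f$, $W$ and the data $a,b,\theta$. Writing $n:=\partial_t f-W$, condition \eqref{eq:ort_cond_1} reads $h(n,n)=1$ and \eqref{eq:ort_cond_2} reads $h(n,\partial_s f)=0$: that is, $n$ is the (outward-pointing) $h$-unit vector $h$-orthogonal to $\partial_s f$, which in dimension two is unique up to sign (cf.\ Lem.~\ref{lem:666}). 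Hence the whole theorem amounts to computing this $n$ from the matrix \eqref{eq:h_theta} and adding back $W=(w_1,w_2)$.

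The key step is to diagonalize $h$. First I would note that $h(v,v)=Q_p(v)=\frac{(v_1\cos\theta-v_2\sin\theta)^2}{a^2}+\frac{(v_1\sin\theta+v_2\cos\theta)^2}{b^2}$, so that, with $R=\begin{pmatrix}\cos\theta&-\sin\theta\\ \sin\theta&\cos\theta\end{pmatrix}$ and $D=\mathrm{diag}(a^{-1},b^{-1})$, one has $\{h_{ij}\}=R^{\top}D^{2}R$ (this is exactly \eqref{eq:h_theta}). Thus the linear change $v\mapsto \tilde v:=DRv$ turns $h$ into the ordinary Euclidean product, so that $h$-orthogonality and $h$-normalization become the Euclidean ones. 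It is worth remarking that the rotated components $(Rv)_1=v_1\cos\theta-v_2\sin\theta$ and $(Rv)_2=v_1\sin\theta+v_2\cos\theta$ are precisely the combinations of $\partial_s x,\partial_s y$ occurring in \eqref{eq:rich1}--\eqref{eq:rich2}, which is why those expressions appear.

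Next I would solve in the flat coordinates and transform back. Setting $\tilde\xi:=DR\,\partial_s f$ and $J=\begin{pmatrix}0&-1\\ 1&0\end{pmatrix}$, the Euclidean-unit vector orthogonal to $\tilde\xi$ is $\pm J\tilde\xi/|\tilde\xi|$, whence $n=R^{\top}D^{-1}\bigl(\pm J\tilde\xi/|\tilde\xi|\bigr)$. Carrying out the product $R^{\top}D^{-1}JD R$ and abbreviating $\alpha:=\partial_s x\cos\theta-\partial_s y\sin\theta$, $\beta:=\partial_s x\sin\theta+\partial_s y\cos\theta$, one gets $|\tilde\xi|=(ab)^{-1}\sqrt{b^{2}\alpha^{2}+a^{2}\beta^{2}}$, which reproduces the common denominator of \eqref{eq:rich1}--\eqref{eq:rich2}, and the two components of $n=\partial_t f-W$ yield exactly the fractions there; adding $W$ finishes the computation, with the sign $\pm$ fixed by the outward condition (as recorded in the footnote of the statement). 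The only real difficulty is this error-prone matrix bookkeeping --- diagonalizing $h$, inserting $J$, transforming back, and tracking the sign convention; conceptually nothing is left, since existence and uniqueness of the characterization were already secured in Thm.~\ref{th:ort_cond}.
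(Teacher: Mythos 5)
Your proposal is correct and takes essentially the same route as the paper: both reduce, via Thm.~\ref{th:ort_cond} and the preceding Proposition, to finding the outward-pointing $h$-unit vector $h$-orthogonal to $\partial_s f$ and then compute it in the principal-axis frame of the ellipse, the paper by solving the $\theta=0$ case directly and rotating back, you by the factorization $\{h_{ij}\}=R^{\top}D^{2}R$ and the auxiliary rotation $J$ in the flattened coordinates. Your matrix bookkeeping checks out (the resulting components agree with \eqref{eq:rich1}--\eqref{eq:rich2} up to a global sign absorbed into the $\pm$, fixed by the outward condition), so the extra scaling by $D$ is only a cosmetic variant of the paper's rotation argument.
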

\begin{proof}
The proof is a straightforward computation taking into account that, from \eqref{eq:ort_cond_2}, $\partial_t f-W$ must be $ h $-orthogonal to $\partial_sf$, its length is controlled by \eqref{eq:ort_cond_1} and it is oriented outwards (recall also Thm. \ref{th:ort_cond} for the characterization).

Anyway, computations can be simplified by taking into account that, first, when $ \theta = 0 $, \eqref{eq:ort_cond_1} and \eqref{eq:ort_cond_2} yield:
\begin{equation}
\label{eq:rich1_0}
\partial_tx = \pm \frac{a^2 \partial_sy}{\sqrt{a^2 (\partial_sy)^2 + b^2 (\partial_sx)^2}} + w_1,
\qquad \partial_ty = \pm \frac{-b^2 \partial_sx}{\sqrt{a^2 (\partial_sy)^2 + b^2 (\partial_sx)^2}} + w_2,
\end{equation} 
in agreement with \eqref{eq:rich1}, \eqref{eq:rich2}. Then, in order to obtain $ (\partial_tx,\partial_ty) $ for arbitrary  $ \theta $, rotate the coordinate axes the same angle $ \theta $, use \eqref{eq:rich1_0}, and recover the initial coordinates.
\end{proof}

\begin{rem} 
To check that these equations agree with Richards' \cite{R}, notice that we have assumed independence between $h$ and $W$ in the model. However,  Richards  originally claimed as an experimental fact (within certain limits) that  the ratio $ a/b $ depends on the wind speed only, with the vector $ W $ always aligned with the major axis of the ellipse. This means that the wind not only displaces the initial indicatrices, but also deforms them. In the simplest model (used by Richards), the fire spread can be approximated in the tangent space by a sphere, which is deformed to an ellipse when the wind appears. With this model in mind, it is convenient to write the components $(c_1,c_2)$ of the wind vector $ W=(w_1,w_2) $ with respect to the (orthonormal) basis defined by the main axes of the ellipses. Namely, put
$$
\left(\begin{array}{c}
c_1 \\
c_2
\end{array}\right)
= R_{\theta}^{-1}
\left(\begin{array}{c}
w_1 \\
w_2
\end{array}\right)
\;
\hbox{so that}
\;
\left\lbrace\begin{array}{l}
w_1 = c_1\cos\theta + c_2\sin\theta\\
w_2 = -c_1\sin\theta + c_2\cos\theta,
\end{array}\right. 
$$
where $ R^{-1}_{\theta} $ is the counter-clockwise rotation matrix, and substitute $(w_1,w_2)$ in our equations \eqref{eq:rich1} and \eqref{eq:rich2}. This way, one arrives exactly at 
Markvorsen's equations (who also considered a wind independent of the metric), see
\cite[Thm. 9.1]{M16} and \cite[Thm. 8.1]{M17}. Then, Richards' equations \cite[Eqs. (10), (11)]{R} are obtained by setting $c_2=0$, so that the wind blows along the semi-axis $ a $.
\end{rem}

\section{The case of strong wind}\label{s6}
Until now we have assumed that the wind is mild, which guaranteed that $ \Sigma $ is the indicatrix of a Finsler metric. Now we will take a step further by allowing a strong wind 
(i.e., some zero vectors may lie outside $ \Sigma $), so that a type of 
wind Finslerian structure (\S \ref{6.0} below) is obtained. In principle, this does not affect the geometric framework of the cone structure. However, the physical interpretation of the wavemap will depend on the type of the wave and the specific situation at hand. 

For example, when modeling sound waves in a medium that moves faster than the sound itself, the wavefront is given effectively by the $\C$-achronal boundary  $\partial J^+(S_0)$, while $J^+(S_0)$ provides the particles of the medium affected by the wave at each time. In \S \ref{6.1} we will see that $\partial J^+(S_0)$ can be computed just by changing $T=\partial/\partial t$ in our cone triple. However, the  problem of first-arriving trajectories also changes, and the wind Finslerian structure must be used to recover the original one (see Rem. \ref{r6.1}). As this problem is  essential for wildfires, it is studied specifically in \S \ref{6.2}, including an estimate of the burned area by the active front (Rem. \ref{rem6.3}).  

\begin{figure}
\centering
\includegraphics[width=1\textwidth]{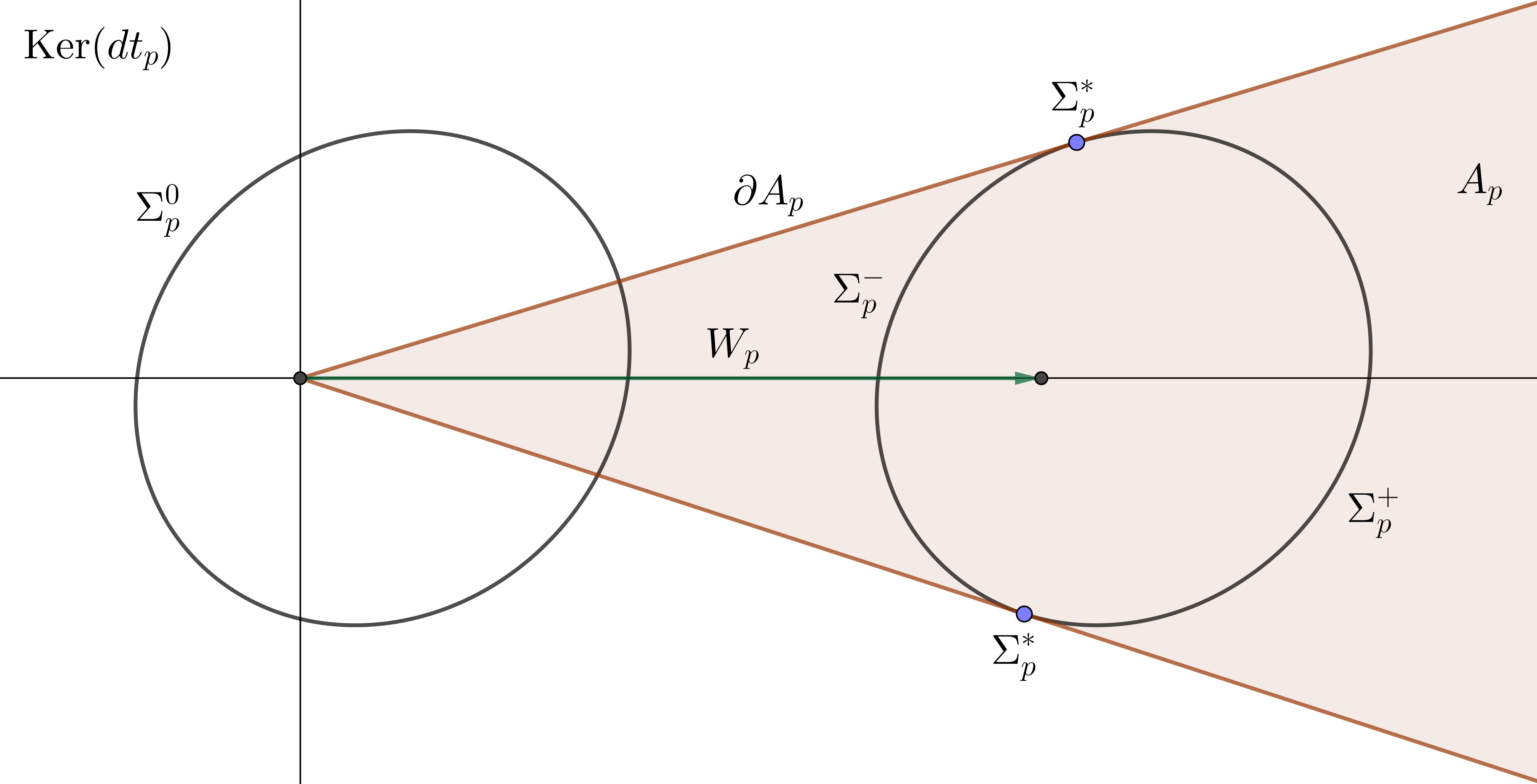}
\caption{The case of strong wind: the initial indicatrix $ \Sigma^0_p $ is displaced by the wind $ W_p $ in such a way that the zero vector no longer lies inside $ \Sigma_p=\Sigma^0_p+W_p $. The tangent lines to $ \Sigma_p $ from the origin divide it into three disjoint components: $ \Sigma_p = \Sigma_p^+ \cup \Sigma_p^- \cup \Sigma^*_p $. $ \Sigma_p^+ $ is the indicatrix of a conic Minkowski norm $ F_p $, while $ \Sigma_p^- $ defines a Lorentzian norm $ (F_l)_p $. These norms are defined in the conic domain $ A_p $ but can be extended continuously to $ \overline{A}_p \setminus \{0\} $ in such a way that if $ v \in \Sigma^*_p $, then $ F_p(v)=(F_l)_p(v)=1 $.}
\label{fig:conic_metric}
\end{figure}

\subsection{Wind Finslerian setting}\label{6.0}
As in \S \ref{sec:general_setting}, we start with a cone structure $ \mathcal{C}$, constructed from the velocities of the wave, and put $\Sigma = (\mathcal{C} \cap dt^{-1}(1))-\frac{\partial}{\partial t}$. So far,  the restriction  $\partial/\partial t$ timelike ({\em mild wind} case) and the triple $ (\Omega = dt,T = \partial/\partial t,F)$ have been used; 
next, this restriction is removed ({\em arbitrary wind} case) and we work directly with the {\em cone wind triple} $ (\Omega = dt,T = \partial/\partial t,\Sigma )$. The latter can be regarded as a wind Finslerian structure (in the sense of \cite[Def. 2.8]{CJS}) varying smoothly with the time. The regions where $\partial/\partial t$ is non-causal (resp.  lightlike, timelike) are called of {\em strong} (resp. {\em critical, mild}) wind. In the region of strong wind, $ \Sigma $ determines  
an (open, connected) conic  domain $ A\subset \text{Ker}(dt)$ whose radial half-lines  intersect transversely $\Sigma$. In this domain, we have both  a
conic Finsler metric $ F $ and a Lorentzian Finsler one $ F_l $,  with indicatrices, resp., the convex  and concave (from infinity)  portions $\Sigma^+, \Sigma^-$ of $\Sigma$  (see Fig. \ref{fig:conic_metric}). Clearly, $ F < F_l $ on $ A $ and both metrics can be continuously extended to $ \overline{A} \setminus \textbf{0} $ (both extensions agree on $ \partial A \setminus \textbf{0} $, see \cite[Prop. 2.12]{CJS} and Fig. \ref{fig:conic_metric}).
 At the points where the wind is critical, the zero vector lies on $ \Sigma $ and $A$ becomes an open half space;  when it is mild, $A= \hbox{Ker}(dt)\setminus \mathbf{0}$ and  $ F $ becomes the already studied Finsler metric (in these two cases $ F_l $ is not defined or can be regarded as equal to $\infty$).  

\subsection{Description of $\C$ and associated Zermelo's problem}\label{6.1} 
When $\C$ is given by a cone wind  triple $(dt,T,\Sigma)$, one can recover a cone triple by  choosing any timelike vector field $ T^0 $ such that $ dt(T^0) = 1 $ and taking the corresponding Finsler metric $ F^0 $ as in Thm. \ref{th:cone_triple}. Then, $ (dt,T^0,F^0) $  becomes a cone triple for $ \mathcal{C} $ on a new decomposition of $M$ as a product $\R\times N$.
 Specifically, choose a vector field ({\em wind}) $W$  with $dt(W)\equiv 0$ so that the zero section lies pointwise inside $\Sigma^0 \coloneqq \Sigma - W $ and put  $ T^0 = \partial/\partial t + W$. Then, $T^0$  becomes  timelike, the indicatrix of the searched Finsler metric is $\Sigma^0= \Sigma - W= (\mathcal{C} \cap dt^{-1}(1)) - T^0  $ and the flow of $T^0$ provides a new decomposition $M\ni p \rightarrow (l,x)\in \R\times N$ so that $T^0=\partial/\partial l$. The integral curves of  $T$ and $T_0$ represent, resp., initial observers at rest and particles of the medium, the latter moving with velocity $W$ with respect to the former, while $\Sigma$ and  
$\Sigma^0$ contain the propagation velocities of the wave with respect to $T,T^0$.

The Lorentz-Finsler metric associated with $ (dt,T^0=\partial/\partial t+W,F^0) $ is $ G(\tau,v) = \tau^2-F^0(v-\tau W)^2 $, where $ (\tau,v) = \tau \frac{\partial}{\partial t}\rvert_p+v = \tau T^0_p+(v-\tau W_p) $, with 
$ v \in \textup{Ker}(dt_p) $, $ p \in M $.
Thus, for any $ \hat v = (1,v) \in T_pM \setminus \text{Span}(T^0_p), \hat w = (0,w) \in T_pM $,
$$
\hat{v} \in \mathcal{C}_p \Leftrightarrow G(\hat{v}) = 0 \Leftrightarrow F^0(v-W)=1 \Leftrightarrow F(v) = 1 \text{ or } F_l(v) = 1
$$
and, in this case,
\begin{equation}
\label{eq:G_F_strong_wind}
\begin{split}
& \hat v \bot_{G} \hat w \Leftrightarrow (v-W) \bot_{F^0} w \Leftrightarrow w \in T_{(v-W)}\Sigma^0 \Leftrightarrow w \in T_v\Sigma \\
& \Leftrightarrow v \bot_F w, \quad \text{or} \quad v \bot_{F_l} w, \quad \text{or} \quad v \parallel w \text{ with } v \in \partial A,
\end{split}
\end{equation}
where the last possibility comes from the fact that the fundamental tensors $ g^F_v $ and $ g^{F_l}_v $ are not defined when $ v\in \partial A $ (compare with \eqref{eq:equiv_G_F}).
Consequently, we can work with the new triple $ (dt,\partial/\partial t+W,F^0) $ and obtain the wavemap (and thus the wavefront at any time) in exactly the same way as we did in Thm. \ref{th:geod_gen}. Indeed, the only change in this theorem is that, now, the initial condition $\partial_t f(0,s,u)=u$ is such that $u-W \in \Sigma^0_{(0,s)}\cap S_0^{\perp_{F^0}}$ (since now $ \nu(S_0) $ is the set of $ dt $-unit vectors $ \hat{u}=(1,u) $ with $ u-W $ as above; see Fig. \ref{fig:strong_wind}). 


\begin{figure}
\centering
\includegraphics[width=1\textwidth]{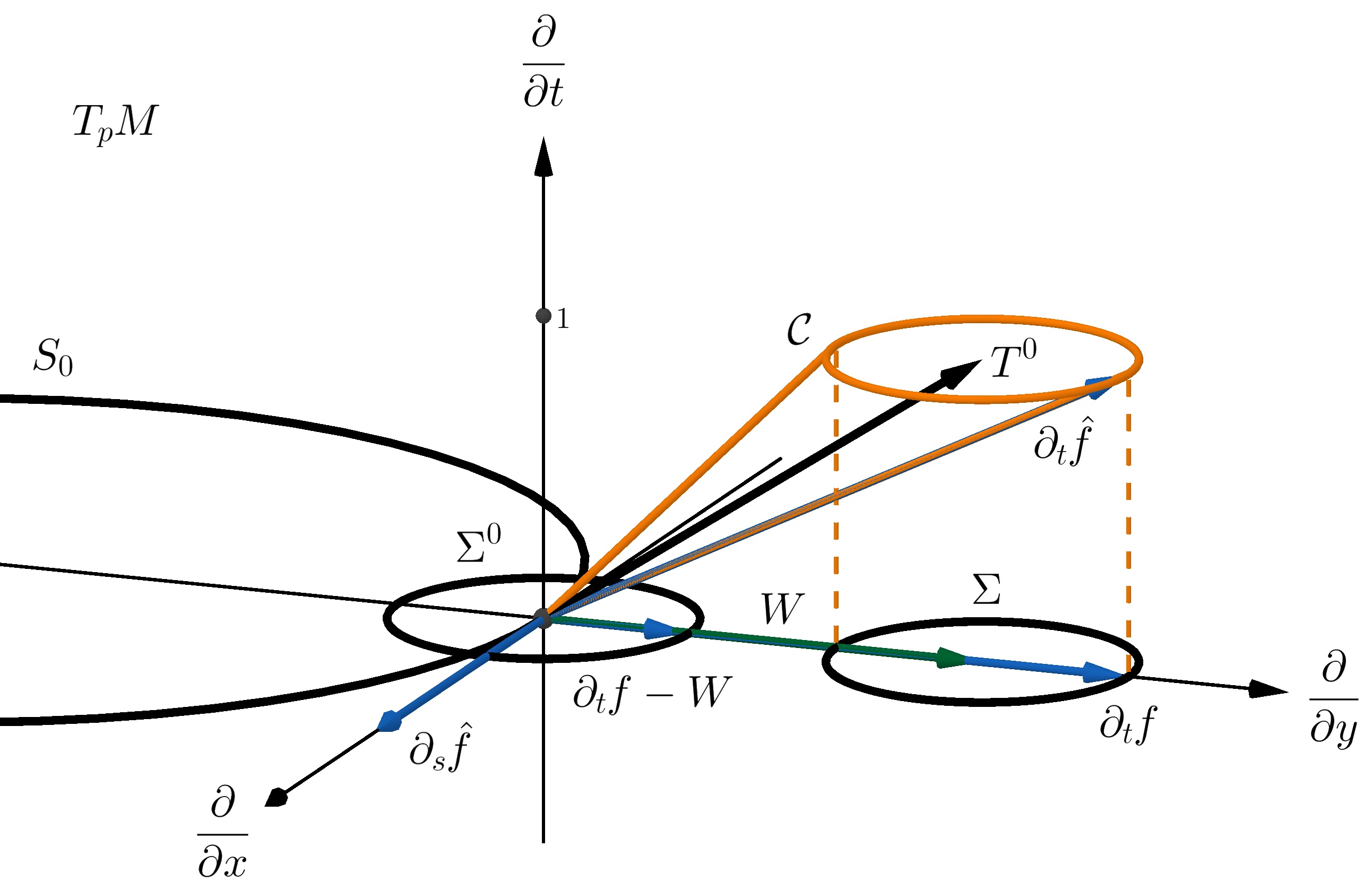}
\caption{The spacetime setting in the case of dimension $ n=2 $ and $ r=1 $. The wavemap $ \hat f(t,s) $ is defined so that $ t \mapsto \hat f(t,s) $ is the unique cone geodesic $ t $-parametrized with initial velocity $ \partial_t \hat f(0,s) = (1,\partial_tf(0,s)) \in \nu(S_0) $ and such that $ \partial_tf(0,s)-W_{(0,s)} $ points out to the exterior of $ S_0 $. This means that $ \partial_t\hat{f} $ is lightlike, $ F^0(\partial_tf-W) = 1 $ (i.e., $ \partial_tf \in \Sigma $),$ \partial_t\hat{f} \bot_{G} \partial_s\hat{f} $ and $ \partial_tf-W \bot_{F^0} \partial_sf $. Compare with Fig. \ref{fig:velocity}.}
\label{fig:strong_wind}
\end{figure}

\begin{rem}\label{r6.1} Using $ T^0=\partial/\partial t+W$, the computation of $\partial J^+(S_0)$ is reduced to the case when the wind is mild, but the property of being first-arriving for the cone geodesics occurs with respect to the integral curves of $ T^0$ (observers co-moving with the medium).  
 $ \partial J^+(S_0) $ still contains the outermost trajectories of the wave but, in general, they are no longer first-arriving from $ S_0 $  with respect to the original  $T=\partial/\partial t$ (i.e., the Zermelo problem changes, see Fig. \ref{fig:cones_strong_wind}).  
Indeed, the cone geodesics that minimize the original arrival time are those $ G $-orthogonal to $ S_0 $ whose projection is $ F $-orthogonal to $ S_0 $ (and necessarily $ F $-unit),  where $F$ is now the (time-dependent) conic Finsler metric provided by the
wind Finslerian structure. Recall that $F_l $-orthogonal geodesics maximize the arrival time, according to the interpretation in the wind Finslerian case \cite[Prop. 2.41]{CJS} (compare also with \cite[Cor. 6.18, Thm. 7.8]{CJS}). 

Once $\partial J^+(S_0)$ has been computed as above, the arrival time of the wave to an observer in $T=\partial/\partial t$ can be be obtained from the intersection of the corresponding integral curve of $T$ and $\partial J^+(S_0)$. From a practical viewpoint, for any $\hat w =(1,w) \in \nu(S_0)_{(0,s)}$ with $w \in A_{(0,s)}$ and $F(w)=1$, the corresponding geodesic $\hat \gamma$ of the metric $G^{\text{conic}}=dt^2-F^2$ (defined on the conic domain $\cup_{p\in M} \hbox{Span}(T_p)\times A_p$) gives the first-arriving trajectory (at least for small times) in the spatial direction $w$.
\end{rem}

\begin{figure}
\centering
\includegraphics[width=1\textwidth]{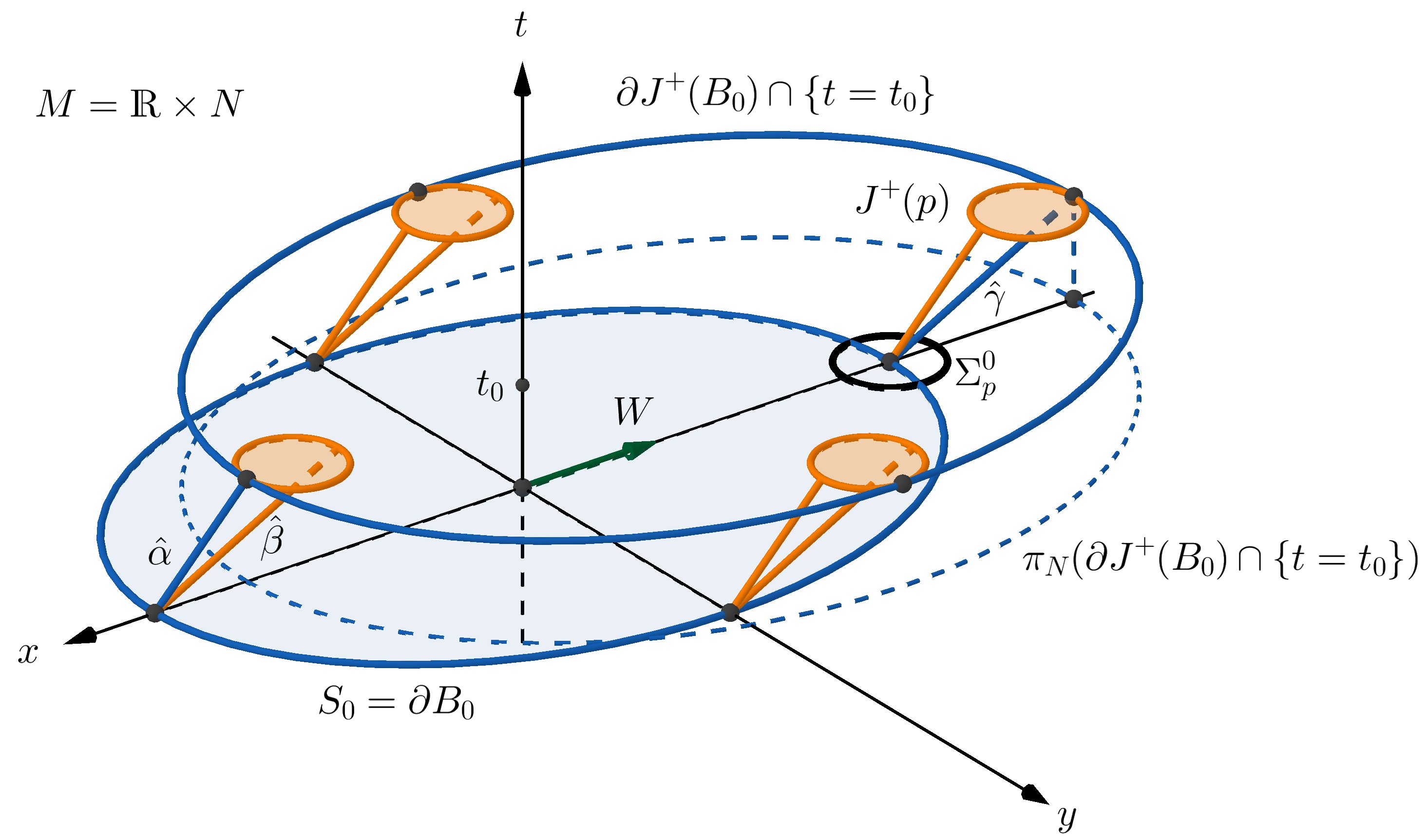}
\caption{Similar case to Fig. \ref{fig:cones} but with a constant strong wind $ W $. The choice of $ \partial_tf-W $ pointing outwards makes $ \partial J^+(B_0) $ contain cone geodesics whose projection is $ F $-unit (such as $ \hat \gamma $) and also $ F_l $-unit cone geodesics (such as $ \hat \alpha $). Note that the latter are not first-arriving: e.g., $ \hat \beta $ arrives earlier than $ \hat \alpha $ at every point. Also, these curves enter the initial region $ B $, so $ \partial J^+(B_0) $ might not represent a realistic wavefront in some situations, such as wildfires.}
\label{fig:cones_strong_wind}
\end{figure}

\subsection{Active firefront of a wildfire}\label{6.2}
A wildfire with strong wind can be modelled roughly with a wind Finslerian structure $\Sigma$, which represents the velocities of propagation (in absence of wind), displaced by the wind $W$. This is not physically accurate, as the effect of the wind is not simply a displacement.\footnote{A more realistic model is developed in \cite{JPS}.} However, it can be used to obtain a rough estimate of its active front of propagation.
Recall that in such a model $\partial J^+(B_0)$ contains trajectories that enter the already burned initial area (e.g., $ \hat \alpha $ in Fig. \ref{fig:cones_strong_wind}). Obviously, these curves must not be considered as trajectories of the firefront. In fact, we can assume that the fire is extinguished when compelled to enter an already burned area (this might also not be physically realistic in some cases but, anyway, it will not be relevant for our rough estimate). So, the following modification of the framework would be applicable.

 Given the initial source of the fire $ S_0=\partial B_0 $, the wavemap $\hat f(t,s)=(t,f(t,s)), (t,s)\in [0,\infty) \times S$ can be defined as in \S \ref{subsec:orth_cond} taken into account that, now, the choice of one of the two lightlike directions at each $\nu(S_0)_p$ must ensure that $ \partial_tf-W $ points outwards from $B_0$ at $p$. In order to identify the points where the fire is extinguished we give the following proposition.
 
\begin{prop}
Let $ (0,s) \in S_0 $. The vector $ \partial_tf(0,s) $ is parallel to $ \partial_sf(0,s) $ if and only if $ \partial_tf(0,s) \in \partial A_{(0,s)} $ (i.e., $ \partial_tf(0,s) $ is both $ F $-unit and $ F_l $-unit). In this case we say that $ (0,s) $ is an {\em extinction point of} $ S_0 $ (see Fig. \ref{fig:wildfires_strong_wind}).
\end{prop}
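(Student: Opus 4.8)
The plan is to reduce the statement to the geometric dichotomy already encoded in the orthogonality relation \eqref{eq:G_F_strong_wind}. Write $v:=\partial_tf(0,s)$ and $w:=\partial_sf(0,s)$. Since $S_0$ is a regular closed curve (Conv. \ref{conventionfire}), one has $w\neq0$; and since the wavemap is built in \S \ref{6.1}--\S \ref{6.2} from $t$-parametrized cone geodesics whose initial velocity satisfies $(\partial_tf-W)\bot_{F^0}\partial_sf$, the first equivalence in \eqref{eq:G_F_strong_wind} yields $\hat v\bot_G\hat w$ at $t=0$. Moreover $\hat v=(1,v)$ is $\C$-lightlike, so $v$ lies on the indicatrix $\Sigma_{(0,s)}$ and in particular $v\neq0$. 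Thus all the hypotheses needed to invoke \eqref{eq:G_F_strong_wind} are in place.

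First I would translate the orthogonality into a tangency statement. By \eqref{eq:G_F_strong_wind}, $\hat v\bot_G\hat w$ is equivalent to $w\in T_v\Sigma$. Because $\Sigma_{(0,s)}$ is a smooth curve in the plane $\textup{Ker}(dt_{(0,s)})\cong\R^2$, the tangent space $T_v\Sigma$ is one-dimensional, and since $w\neq0$ this forces $T_v\Sigma=\R w$. The whole proposition then becomes a comparison between the tangent direction $\R w$ of $\Sigma$ at $v$ and the radial direction $\R v$.

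The core of the argument is the identification $\Sigma\cap\partial A=\Sigma^*$. By the construction of the conic domain $A$ in \S \ref{6.0}, the radial half-lines meet $\Sigma$ transversally exactly over $A$, so a point $v\in\Sigma$ lies in $\partial A$ if and only if the radial half-line $\R^+v$ is tangent to $\Sigma$ at $v$, i.e. if and only if $T_v\Sigma=\R v$ (equivalently $v\in\Sigma^*$). With this I would close both implications. For the converse direction, $v\in\partial A$ gives $T_v\Sigma=\R v$, which combined with $T_v\Sigma=\R w$ yields $v\parallel w$. For the direct direction, $v\parallel w$ gives $v\in\R w=T_v\Sigma$, hence $T_v\Sigma=\R v$; thus the radial line is tangent to $\Sigma$ at $v$, so $v$ cannot project onto the transversal part $\Sigma^+\cup\Sigma^-$ and must lie in $\Sigma^*\subset\partial A$. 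Finally, $v\in\Sigma\cap\partial A=\Sigma^*$ is precisely the locus where the continuous extensions of $F$ and $F_l$ coincide with value $1$ (\S \ref{6.0}, Fig.~\ref{fig:conic_metric}), which gives the parenthetical claim that $v$ is simultaneously $F$- and $F_l$-unitary.

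I do not expect a genuine obstacle: the statement is essentially an unwinding of the third alternative in \eqref{eq:G_F_strong_wind}, namely the case $v\in\partial A$, where the fundamental tensors of $F$ and $F_l$ degenerate and orthogonality collapses to collinearity. The only points requiring care are bookkeeping ones—checking that $G$-orthogonality really holds at the initial instant $t=0$ so that \eqref{eq:G_F_strong_wind} applies, that $w\neq0$, and that the radial tangency locus is cleanly described by $\Sigma\cap\partial A=\Sigma^*$.
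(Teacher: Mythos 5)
Your proposal is correct and follows essentially the same route as the paper's proof: both use \eqref{eq:G_F_strong_wind} to convert the $G$-orthogonality $\partial_t\hat f\bot_G\partial_s\hat f$ into the tangency $\partial_sf\in T_{\partial_tf}\Sigma$, and then observe that this tangent direction is radial precisely on $\Sigma\cap\partial A=\Sigma^*$. Your write-up merely makes explicit what the paper leaves implicit (that $\partial_sf\neq 0$, that $T_v\Sigma$ is one-dimensional, and that radial tangency characterizes $\partial A$ via the transversality in \S\ref{6.0}), which is sound but not a different argument.
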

\begin{proof}
Recall that $ \partial_t\hat f(0,s) \bot_G \partial_s\hat f(0,s) $, which means that $ \partial_sf(0,s) $ is tangent to $ \Sigma $ at $ \partial_tf(0,s) $, as stated in \eqref{eq:G_F_strong_wind}. Then $ \partial_tf(0,s) || \partial_sf(0,s) $ only holds when $ \partial_tf(0,s) \in \partial A_{(0,s)} $.
\end{proof}

\begin{figure}
\centering
\includegraphics[width=1\textwidth]{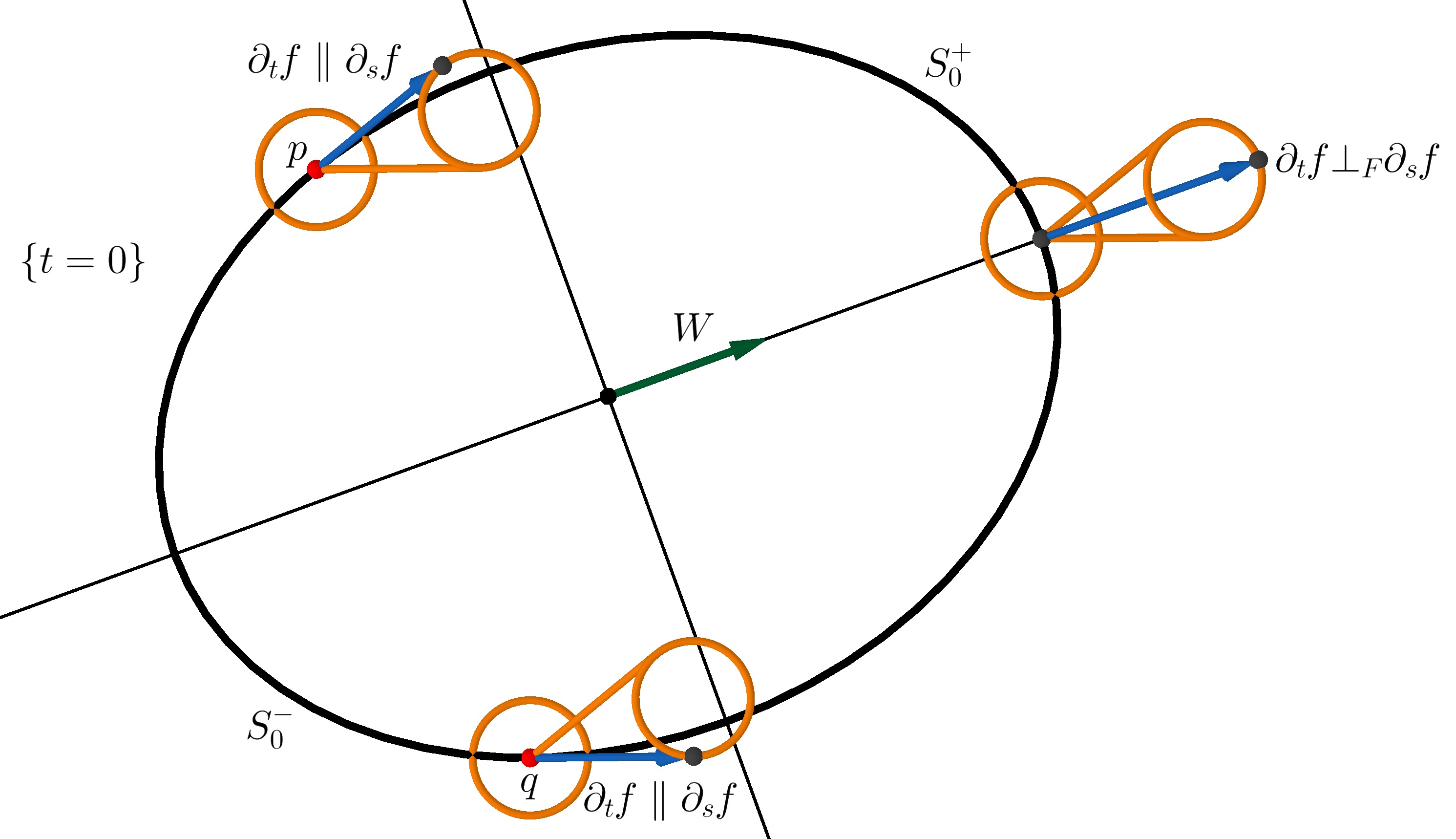}
\caption{The case of Fig. \ref{fig:cones_strong_wind} from an aerial view. In this situation, $ p $ and $ q $ are the extinction points, i.e., the only points on $ S_0 $ where $ \partial_tf \parallel \partial_sf $ or, equivalently, $ \partial_tf \in \partial A_p $. These points make a separation between the region $ S_0^+ $ that provides the active firefront, and the region $ S_0^- $ where the fire ends up extinguished. The orthogonality condition $ \partial_tf \bot_F \partial_sf $ holds on $ S_0^+ $.}
\label{fig:wildfires_strong_wind}
\end{figure}

The extinction points divide $ S_0 $ into several open connected components (typically two if $ S_0 $ is convex and the wind does not change dramatically from one point to another), alternating regions where the spatial trajectories of the fire are $ F $-unit and go outwards ({\em trajectories of the active firefront}) and regions where these trajectories are $ F_l $-unit and go inwards (which will be discarded). Let $ S_0^+=\{0\}\times S^+ $ and $ S_0^- $ be the union of the former and latter type of connected components, resp. (excluding the extinction points). Then, the {\em  active firefront} at each $t_0$ is $\{\hat f(t_0,s): s\in S^+\}$, i.e., the component of $ \partial J^+(S_0^+) $ that heads out from $ B_0 $ (intersected with $\{t=t_0\}$). Recall that each one of its $t-$parametrized cone geodesics $\hat \gamma(t)=(t,\gamma(t))$ remains first-arriving from $S_0$, at least for small time. Indeed, $\gamma'(0)$ becomes unitary for the conic Finsler metric $F$ and Lem. \ref{th:zermelo} is still applicable (even though Thm. \ref{cor:zermelo} cannot be reobtained as $ S_0^+ $ is not compact). The active firefront can also be obtained from the ODE  in Cor. \ref{th:geod_wildfires}: simply, change $S$ by $S^+$ recalling that $F$ is now the conic Finsler metric.\footnote{On the contrary, as the fire on $ S_0^- $ can be regarded as extinguished, the wavemap $\hat f$ does not represent the physical firefront on these points.}

\begin{rem}
\label{rem6.3}
From this rough model, the estimate of the burned area until the time $t_0>0$ would be  (the closure of) $  f([0,t_0],S^+) $.
 Summing up, when  the wind is strong, the  active firefront is determined just by the (time-dependent) conic Finsler metric $F$ of the wind Finslerian structure and only its $F$-unit directions should be taken into account for the computation of the wavemap. This provides a seemingly highly singular description of this front, but the overall spacetime viewpoint restores smoothness (in the spirit of \cite{CJS}). 
\end{rem}

\section*{Acknowledgments}
This work is a result of the activity developed within the framework of the Programme in Support of Excellence Groups of the Regi\'on de Murcia, Spain, by Fundaci\'on S\'eneca, Science and Technology Agency of the Regi\'on de Murcia. MAJ was partially supported by MICINN/FEDER project reference PGC2018-097046-B-I00 and Fundaci\'on S\'eneca (Regi\'on de Murcia) project reference 19901/GERM/15, Spain, and EPR and MS by Spanish MINECO/FEDER project reference MTM2016-78807-C2-1-P. MS was also partially supported by FEDER-Andaluc\'ia grant A-FQM-494-UGR18, and EPR by Programa de Becas de Iniciaci\'on a la Investigaci\'on para Estudiantes de M\'asteres Oficiales de la U. Granada, Spain.

\end{document}